\journalname{Mathematische Annalen}
\newcommand{\protectedtexorpdfstring}[2]{%
  \texorpdfstring{\unexpanded{\unexpanded{#1}}}{#2}%
}
\newcommand{\txymatrix}[2]{%
  \protectedtexorpdfstring
    {\catcode`\@=12 \scantokens{\xymatrix{#1}}}
    {#2}%
}
\definecolor{e-mail}{rgb}{0,.40,.80}
\definecolor{reference}{rgb}{.20,.60,.22}
\definecolor{citation}{rgb}{0,.40,.80}
\def\beq{\begin{equation}}
\def\eeq{\end{equation}}
\def\Z{{\mathbb Z}}
\def\Q{{\mathbb Q}}
\def\C{{\mathbb C}}
\def\D{{\mathbb D}}
\def\I{{\mathbb I}}
\def\K{{\mathbf k}}
\def\k{{\mathbf k}}
\def\o{\omega}
\def\p{\prime}
\def\cM{{\mathcal M}}
\def\cN{{\mathcal N}}
\def\cL{{\mathcal{ L}}}
\def\cR{{\mathcal R}}
\def\cU{{\mathcal U}}
\def\cV{{\mathcal V}}
\def\cW{{\mathcal W}}
\def\cX{{\mathcal X }}
\def\dt{\delta}
\def\dx{\partial}
\def\ga{\mathfrak{g}}
\def\gl{\mathfrak{gl}}
\def\ha{\mathfrak{h}}
\def\wtilde{\widetilde}
\def\D{\Delta}
\def\p{\partial}
\def\Hom{\operatorname{Hom}}
\def\Gl{\operatorname{GL}}
\def\GL{\operatorname{GL}}
\def\Ad{\operatorname{Ad}}
\def\Sl{\operatorname{SL}}
\def\Lie{\operatorname{Lie}}
\def\Char{\operatorname{char}}
\def\SL{\operatorname{SL}}
\def\PSL{\operatorname{PSL}}
\def\Stab{\operatorname{Stab}}
\def\Gal{\operatorname{Gal}}
\def\Galdelta{\operatorname{Gal}^\delta}
\def\Span{\operatorname{span}}
\def\Diff{\operatorname{Diff}}
\def\id{\operatorname{id}}
\def\ord{\operatorname{ord}}
\def\Stabdelta{\operatorname{Stab}^\delta}
\def\Ga{\bold{G}_a}
\def\Vect{\operatorname{Vect}}
\def\de{\delta}
\begin{document}
\title{Calculating differential {G}alois groups of parametrized differential equations, with applications to hypertranscendence\thanks{This work was partially supported by 
by ANR-11-LABX-0040-CIMI within the program ANR-11-IDEX-0002-02, by ANR-10-JCJC 0105,  by the NSF grants CCF-0952591 and DMS-1413859, by the NSA grant H98230-15-1-0245,  by the ISF grant 756/12, and by the Minerva foundation with funding from the Federal German Ministry for Education and Research.}}
\titlerunning{{G}alois groups of parametrized differential equations, with applications to hypertranscendence}
\author{Charlotte Hardouin \and  Andrei Minchenko \and Alexey Ovchinnikov}

\institute{C. Hardouin \at
Institut de Math\'{e}matiques de Toulouse, 118 route de Narbonne,
31062 Toulouse Cedex 9, France\\ \email{hardouin@math.ups-tlse.fr}
\and
A. Minchenko\at 
The Weizmann Institute of Science, Faculty of Mathematics and Computer Science, 234 Herzl Street, Rehovot, 7610001, Israel\\\email{an.minchenko@gmail.com}
\and
A. Ovchinnikov \at 
CUNY Queens College, Department of Mathematics,
65-30 Kissena Blvd, Queens, NY 11367, USA and
CUNY Graduate Center, Department of Mathematics, 365 Fifth Avenue,
New York, NY 10016, USA\\ \email{aovchinnikov@qc.cuny.edu}
}

\date{Received:  8 June 2015 / Accepted: 16 June 2016}


\maketitle

\begin{abstract}
The main motivation of our work is to create an efficient algorithm that decides hypertranscendence of solutions of linear differential 
equations, via the parameterized differential and 
Galois theories. To achieve this, we expand the representation theory of linear differential algebraic groups and develop new algorithms that calculate unipotent radicals of parameterized differential Galois groups for differential equations whose coefficients are rational functions. P. Berman and M.F. Singer presented an algorithm calculating the differential Galois group for differential equations without parameters whose differential operator is a composition of two completely reducible differential operators. We use their algorithm as a part of our algorithm.   
As a result, we find an effective criterion for the algebraic independence of the solutions of parameterized differential equations and all of their derivatives
with respect to the parameter.
\keywords{Differential Galois theory \and Differential transcendence \and Special functions}
\end{abstract}

\tableofcontents

\section{Introduction}

A special function is said to be hypertranscendental if it does not satisfy any algebraic differential equation. 
The study  of functional hypertranscendence has recently appeared in various areas of mathematics.
In combinatorics, the question of the hypertranscendence of generating series is frequent because it gives information on the growth 
of the coefficients: for instance, the work of  Kurkova and Raschel \cite{KurkRaschel} solved a   famous conjecture about the differential algebraic behaviour of generating series of walks 
on the plane. Dreyfus, Roques, and Hardouin \cite{DHR} gave criteria to test the hypertranscendence 
of generating series associated to $p$-automatic sequences and more generally Mahler functions, generalizing the work of Nguyen \cite{NG}, Nishioka \cite{Nish}, and Rand\'{e} \cite{RandeThese}.  Also, when the derivation encodes the continuous
deformation of an auxiliary parameter, the hypertranscendence is connected to the notion of isomonodromic deformation (see the work of Mitschi and Singer \cite{MitschiSinger:MonodromyGroupsOfParameterizedLinearDifferentialEquationsWithRegularSingularities}).

  The work of  Cassidy, Hardouin, and Singer \cite{cassisinger,HardouinSinger} were motivated by a  study  of hypertranscendence using Galois theory. 
Starting from a linear functional equation  with coefficients in a field with a ``parametric'' derivation, they were able to construct a geometric object, called the parameterized differential Galois group,  whose symmetries control the algebraic relations between the solutions of the functional equation and all of their derivatives. The question of hypertranscendence of solutions of linear functional equations is thus reduced to the computation of the parameterized differential Galois groups of the equations (see for instance the work of Arreche \cite{Carlos2} on the incomplete gamma function $\gamma(x,t)$ and the work \cite{DHR}). The parameterized differential Galois groups are linear differential algebraic groups as introduced by Kolchin and   developed by Cassidy \cite{cassdiffgr}. These are groups of matrices whose entries 
satisfy systems of polynomial differential  equations, called  defining equations of the parameterized differential Galois group.

Then, in this context of Galois theory, one can address  a direct problem, that is, the question of the algorithmic computation of the parameterized differential Galois group. For linear functional equations of order $2$, one can find a Kovacic-type  algorithm  initiated by Dreyfus~\cite{Dreyfus}  and  completed by Arreche~\cite{CarlosISSAC}. In~\cite{MinOvSing}, Minchenko, Ovchinnikov, and Singer  gave an algorithm that allows to test 
if the parameterized differential Galois group is reductive and to compute the group in that case. In \cite{MinOvSingunip}, they also show how to compute
the parameterized differential Galois group if its quotient by the unipotent radical is conjugate to a group of matrices with constant entries with respect to the parametric derivations.  The algorithms of \cite{MinOvSingunip,MinOvSing} rely on bounds on the order of the defining equations of the parameterized differential
Galois group, which allows to use  the algorithm obtained by Hrushovski \cite{Hrushcomp} and has been further analyzed and improved by Feng \cite{RFdifferential2015} in the case of no parametric derivations.

In this paper, we study the parameterized differential Galois group of a differential operator of the form $L_1(L_2(y))=0$ where 
$L_1,L_2$ are completely reducible differential operators. This   situation  goes beyond the previously studied cases, because the parameterized Galois
group of such an equation is no longer reductive and its quotient by its unipotent radical might not be constant. If there is no parametric derivation, this problem was solved by Berman and Singer in \cite{BeSing} for differential operators and rephrased using Tannakian categories
by Hardouin \cite{HardouinSemCongres}. The general case is however more complicated because, unlike the case of no parameters, the 
order of the defining equations of the parameterized differential Galois group is no longer controlled by the order of the functional equation $L_1(L_2(y))=0$.
Therefore, we present an algorithm that relies on bounds  (see Section~\ref{sec:mergeconstantnonconstant}) and, in a generic situation, we find a  description of the parameterized differential Galois group. In this description, the  defining equations of the unipotent radical are obtained  by applying  standard operations to linear differential operators  (cf.~\cite{HardouinSemCongres}).    
  
  However, by a careful study of the extension of completely reducible representations of quasi-simple linear differential algebraic groups, we are able to deduce 
a complete and effective criterion to test the hypertranscendence of solutions of inhomogeneous linear differential equations (Theorem \ref{cor:criteria}).

  The paper is organized as follows. We start with a brief review of the basic notions in differential algebra, linear differential algebraic groups, and linear differential equations with parameters in Section~\ref{sec:notions}. {Our algorithmic results for calculating parameterized differential} Galois groups
are presented in Section~\ref{sec:main}.
Our effective criterion for hypertranscendence
of solutions of extensions of irreducible differential equations is contained in Section~\ref{sec:criterion}, which is preceded by Section~\ref{sec:repsplit}, where we extend  results of Minchenko and Ovchinnikov \cite{MinOvRepSL2} for the purposes of the hypertranscendence criterion. We use this criterion
 to prove hypertranscendence results for the Lommel differential equation in Section~\ref{sec:examples}.

\section{Preliminary notions}\label{sec:notions}
We shall start with some basic notions of differential algebra and then recall what linear differential algebraic groups  and their representations are.

\subsection{Differential algebra}
\begin{definition} A {\em differential ring} is a ring $R$ with a finite set $\Delta=\{\delta_1,\ldots,\delta_m\}$ of commuting derivations on $R$. A {\em $\Delta$-ideal} of $R$
is an ideal of $R$ stable under any derivation in $\Delta$. \end{definition}

 In the present paper, $\Delta$ will consist of one or two elements. Let $R$ be a $\Delta$-ring. For any $\delta \in \Delta$, we denote
 $$
 R^\delta= \{r \in R\:|\: \delta(r) = 0\},
 $$
 which is a $\Delta$-subring  of $R$ and is called the {\em ring of $\delta$-constants} of $R$. If $R$ is a field and a differential ring, then it is called a differential field, or $\Delta$-field for short. For example, $R = \Q(x,t)$, $\Delta=\{\delta,\partial\}$, and $\partial = \partial/\partial x$, $\delta=\partial/\partial t$, forms a differential field. The notion of $R$-$\Delta$-algebra is defined analogously.

The ring of $\Delta$-differential polynomials $K\{y_1,\ldots,y_n\}$ in the differential indeterminates, or $\Delta$-indeterminates,  $y_1,\ldots,y_n$ and with coefficients in a $\Delta$-field $(K,\Delta)$,  is the ring of polynomials in the indeterminates formally denoted $$\left\{\delta_1^{i_1}\cdot\ldots\cdot\delta_m^{i_m} y_i\:\big|\: i_1,\ldots,i_m\ge 0,\, 1\le i\le n\right\}$$ with coefficients in $K$. We endow this ring with a structure of $K$-$\Delta$-algebra by setting 
$$\delta_k \left(\delta_1^{i_1}\cdot\ldots\cdot \delta_m^{i_m} y_i \right)= \delta_1^{i_1} \cdot\ldots\cdot \delta_k^{i_k+1} \cdot\ldots\cdot \delta_m^{i_m} y_i.$$ 

\begin{definition}[{see \cite[Corollary~1.2(ii)]{Marker2000}}]
A differential field $(K,\Delta)$ is said to be differentially closed or $\Delta$-closed for short, if, for every (finite) set of $\Delta$-polynomials $F \subset K\{y_1,\ldots,y_n \}$, if the system of differential equations $F=0$ has a solution  with entries in some $\Delta$-field extension $L$, then it has a solution with entries in $K$.\end{definition}

 For $\partial \in \Delta$, the ring $K[\partial]$ of differential operators, or $\partial$-operators for short,  is the $K$-vector space with basis $1,\partial,\dots,\partial^n,\dots$ endowed  with the following multiplication rule: $$\partial\cdot a = a\cdot\partial + \partial(a).$$
To a $\partial$-operator $L$ as above, one can associate the linear homogeneous $\partial$-polynomial $$L(y)= a_n\partial^n y+\ldots+a_1\partial y + a_0 y \in K\{y\}.$$ In what follows, we  assume that every field is of characteristic zero.

\subsection{Linear differential algebraic groups  and their unipotent radicals}
In this section, we first introduce the basic terminology of Kolchin-closed sets, linear differential algebraic groups and their representations. We then define   unipotent radicals of linear differential algebraic groups, reductive linear differential algebraic groups  and their structural properties. We continue with the notion of  conjugation to constants of linear differential algebraic groups.

Let  $(\K,\delta)$ be a differentially closed field, $C=\K^\delta$, and  $(F,\delta)$  a $\delta$-subfield of $\k$.

\subsubsection{First definitions}

\begin{definition}\label{def:Kc} A {\it Kolchin-closed} (or $\delta$-closed, for short) set $W \subset \K^n$   is the set of common zeroes
of a system of $\delta$-polynomials with coefficients in $\K$, that is, there exists  $S \subset \K\{y_1,\dots,y_n\}$ such that
$$
W = \left\{ a \in \K^n\:|\: f(a) = 0 \mbox{ for all } f \in S \right\}.$$
We say that $W$ is defined over $F$ if $W$ is the set of zeroes of  $\delta$-polynomials with coefficients in  $F$. More generally, for an $F$-$\delta$-algebra R, $$
W(R) = \left\{ a \in R^n\:|\:   f(a) = 0 \mbox{ for all } f \in S \right\}.$$
\end{definition}

\begin{definition}
If $W \subset \K^n$ is a  Kolchin-closed set defined  over $F$,  the $\delta$-ideal  $$\I(W) = \{ f\in F\{y_1,  \ldots , y_n\} \ | \ f(w) = 0 \mbox{ for all } \ w\in W(\K)\}$$
is called the defining $\delta$-ideal of $W$ over $F$.
Conversely, for a  subset  $S$ of $F\{y_1,\dots,y_n\}$, the following subset  is $\delta$-closed in  $\K^n$ and defined over $F$:
$$
\bold{V}(S)=  \left\{ a \in \K^n\:|\: f(a)= 0 \mbox{ for all } f \in S \right\}.$$ 
\end{definition}
\begin{remark}
Since every  radical $\delta$-ideal of  $F\{y_1,  \ldots , y_n\}$ is generated as a radical $\delta$-ideal by a finite set of $\delta$-polynomials  (see, for example, \cite[Theorem, page~10]{RittDiffalg}, \cite[Sections~VII.27-28]{Kapldiffalg}),  the Kolchin topology is {\em Ritt--Noetherian}, that is, every strictly decreasing chain of Kolchin-closed sets has a finite length.
\end{remark}

\begin{definition}
Let $W \subset \K^n$ be a $\delta$-closed set defined over $F$. The $\delta$-coordinate ring $F\{W\}$ of 
$W$ over $F$ is the $F$-$\Delta$-algebra
$$
F\{W\} = F\{y_1,\ldots,y_n\}\big/\I(W).
$$
If $F\{W\}$ is an integral domain, then $W$ is said to be {\it irreducible}. This  is equivalent to $\I(W)$ being a prime $\delta$-ideal.
\end{definition}

\begin{example}
The affine space $\bold{A }^n$ is the irreducible Kolchin-closed set $\K^n$. It is defined over $F$, and its $\delta$-coordinate ring over $F$  is $F\{y_1,\dots,y_n\}$.
\end{example}
\begin{definition}
Let $W \subset \K^n$ be a $\delta$-closed set defined over $F$. 
Let  $\I(W) = \mathfrak{p}_1\cap\ldots\cap \mathfrak{p}_q$ be  a minimal $\delta$-prime decomposition of\ \ $\I(W)$, that is, the $\mathfrak{p}_i \subset F\{y_1,\dots,y_n\}$ are prime $\delta$-ideals containing $ \I(W)$ and minimal with this property. This decomposition is unique  up to permutation (see \cite[Section~VII.29]{Kapldiffalg}).  The irreducible Kolchin-closed sets 
$W_i=\bold{V}(\mathfrak{p}_i)$ are defined over $F$ and  called the {\it irreducible components} of $W$. We  have $W = W_1\cup\ldots\cup W_q$. 
\end{definition}

\begin{definition}
Let $W_1 \subset \K^{n_1}$ and $W_2 \subset \K^{n_2}$ be two Kolchin-closed sets defined over $F$. 
A $\delta$-polynomial map (morphism) defined over $F$ is a map  $$\varphi : W_1\to W_2,\quad a \mapsto \left(f_1(a),\dots,f_{n_2}(a)\right),\ \  a \in W_1\,,$$ where $f_i \in F\{y_1,\dots,y_{n_1}\}$ for all $i=1,\dots,n_2$.  

If $W_1 \subset W_2$, the inclusion map of $W_1$ in $W_2$ is a $\delta$-polynomial map. In this case, we say that $W_1$ is 
a $\delta$-closed subset of $W_2$.
\end{definition}

\begin{example}
Let $\GL_n \subset \K^n$ be the group of $n \times n$ invertible matrices   with entries in $\K$. One can see
$\GL_n$ as a Kolchin-closed subset of $\K^{n^2} \times \K$ defined over $F$, defined by the equation $\det(X)y-1$ in $F\big\{\K^{n^2} \times \K\big\}=F\{X,y\}$, where $X$ is an $n \times n$-matrix of $\delta$-indeterminates over $F$ and $y$ a $\delta$-indeterminate over $F$. One can thus identify the $\delta$-coordinate ring of $\GL_n$ over $F$ with  $F\{Y,1/\det(Y)\}$, where $Y=(y_{i,j})_{1 \leq i,j \leq n} $ is
a matrix of $\delta$-indeterminates over $F$. We also denote  the special linear group that consists of the matrices  of determinant $1$ by $\SL_n \subset \GL_n$.

Similarly, if $V$ is a finite-dimensional $F$-vector space,  $\GL(V)$ is defined as the group of invertible   $\K$-linear maps of $V\otimes_F \K$. To simplify the terminology, we will also treat 
$\GL(V)$ as Kolchin-closed sets tacitly assuming that some basis of $V$ over $F$ is fixed. 
\end{example}

\begin{remark}
If  $K$ is a field, we denote  the group of invertible matrices with coefficients in $K$ by  $\GL_n(K)$. 
\end{remark}

\begin{definition}[{\cite[Chapter~II, Section~1, page~905]{cassdiffgr}\label{def:LDAG}}] A linear differential algebraic group $G \subset \K^{n^2}$ defined  over $F$
is a subgroup of   $\GL_n$ that is a Kolchin-closed set defined over $F$. If $G \subset H \subset \GL_n$ are Kolchin-closed subgroups of 
$\GL_n$, we say that $G$ is a $\delta$-closed subgroup, or $\delta$-subgroup of $H$.
\end{definition}

\begin{proposition}\label{propo:defzariksidenserelationalggroupanddiffalggroup}
Let $G \subset \GL_n$ be a linear algebraic group defined over $F$. We have:
\begin{enumerate}
\item $G$ is a linear differential algebraic group. 
\item Let $H \subset G$ be a $\delta$-subgroup of $G$ defined over $F$, and the Zariski closure $\overline{H} \subset G$ be the  closure of $H$
with respect to the Zariski topology. In this case, $\overline{H}$ is a linear algebraic group defined over $F$, whose polynomial defining ideal over $F$ is 
 $$\I(H) \cap F[Y] \subset \I(H) \subset  F\{Y\}\,,$$ 
 where  $Y=(y_{i,j})_{1 \leq i,j \leq n} $ is
a matrix of $\delta$-indeterminates over $F$.
\end{enumerate}
\end{proposition}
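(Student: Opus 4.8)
The first assertion is immediate. By definition a linear algebraic group $G\subset\GL_n$ defined over $F$ is the common zero set of a family $S\subset F[Y]$ of ordinary polynomials, and since an ordinary polynomial is a $\delta$-polynomial involving no proper derivatives, we have $F[Y]\subset F\{Y\}$; hence $G=\bold{V}(S)$ is Kolchin-closed and defined over $F$. As $G$ is also a subgroup of $\GL_n$, it is a linear differential algebraic group in the sense of Definition~\ref{def:LDAG}.

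For the second assertion I would argue in three steps. First, I would show that $\overline{H}$ is a subgroup of $\GL_n$ by the standard topological argument. Multiplication $\GL_n\times\GL_n\to\GL_n$ and inversion $\iota\colon\GL_n\to\GL_n$ are morphisms, hence continuous for the Zariski topology, and left translation $L_h$, right translation $R_g$, and inversion are homeomorphisms. Since $H$ is a subgroup, $hH=H$ gives $h\overline{H}=\overline{L_h(H)}=\overline{H}$ for each $h\in H$, so $H\cdot\overline{H}\subset\overline{H}$; applying right translations $R_g$ with $g\in\overline{H}$, so that $R_g(\overline{H})=\overline{Hg}\subset\overline{H}$, then yields $\overline{H}\cdot\overline{H}\subset\overline{H}$, while $\iota(\overline{H})=\overline{\iota(H)}=\overline{H}$ gives closure under inversion. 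Thus $\overline{H}$ is a Zariski-closed subgroup, i.e.\ a linear algebraic group.

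Second, I would identify the ideal, granting for the moment that $\overline{H}$ is defined over $F$. Put $J=\I(H)\cap F[Y]$; it is a radical ideal of $F[Y]$ because $\I(H)$ is radical. A polynomial $f\in F[Y]$ vanishes on $\overline{H}$ if and only if it vanishes on $H$ (any closed set containing $H$ contains $\overline{H}$), i.e.\ if and only if $f\in\I(H)\cap F[Y]=J$. Hence the polynomial defining ideal of $\overline{H}$ over $F$ is exactly $J$, as claimed.

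Third --- and this is the step I expect to be the main obstacle --- I would prove that $\overline{H}$ is genuinely defined over $F$, equivalently that $\overline{H}=\bold{V}(J)$ and that its vanishing ideal over $\K$ equals $J\cdot\K[Y]$. The inclusion $\overline{H}\subset\bold{V}(J)$ is clear since $\bold{V}(J)$ is Zariski-closed and contains $H$. For the reverse inclusion one must descend from $\K$ to $F$, and this is where both $\Char F=0$ and the differential closedness of $\K$ are essential. On the one hand, since $F$ is perfect the radical ideal $J$ stays radical after the faithfully flat base change $F\subset\K$, so $J\cdot\K[Y]$ is radical and $\bold{V}(J)$ is geometrically reduced. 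On the other hand, because $H$ is defined over $F$ and its minimal $\delta$-prime decomposition is defined over $F$, each $F$-irreducible component carries an $F$-generic point in a $\delta$-field extension of $F$ that, by the universal-domain property of the differentially closed field $\K$, embeds into $\K$ over $F$; the resulting $\Aut_\delta(\K/F)$-invariance of $H$ forces the Zariski-closed set $\overline{H}$ to be $\Aut_\delta(\K/F)$-invariant and hence, by descent in characteristic zero, cut out over $F$. Combining the two gives $\I_\K(\overline{H})=J\cdot\K[Y]$, so $\overline{H}=\bold{V}(J)$ is defined over $F$; together with the second step this shows that the polynomial defining ideal of $\overline{H}$ over $F$ is $\I(H)\cap F[Y]$. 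The delicate point throughout is precisely the interplay between the differential definability of $H$ over $F$ and the purely algebraic Zariski descent, since for a general subset of $\K^N$ defined over $F$ the Zariski closure need not descend --- it is the differential closedness of $\K$ that rules this out.
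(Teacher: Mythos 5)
The paper states this proposition without proof (it is quoted as a standard fact), so your attempt has to be measured against the standard argument. Your part (1), your topological subgroup step, and your identification of the $F$-polynomial ideal of $\overline{H}$ with $\I(H)\cap F[Y]$ are all correct --- and the last is even unconditional, since $f\in F[Y]$ vanishes on $H$ if and only if it vanishes on $\overline{H}$. The genuine gap is in your third step, in two places. First, the ``universal-domain property'' you attribute to $\K$ is not a consequence of the paper's definition of differentially closed, which is only existential closedness for \emph{finite} systems: producing an $F$-generic point of a component of $H$ inside $\K$ means realizing a complete type, i.e.\ simultaneously avoiding \emph{every} proper Kolchin-closed subset, and that requires saturation. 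It genuinely fails: the differential closure of $\Q$ realizes only isolated types and so contains no differentially transcendental element, although $\Q\langle a\rangle$ with $a$ a $\delta$-indeterminate is a finitely generated $\delta$-field extension of $\Q$. Second, the concluding inference ``$\overline{H}$ is $\Aut_\delta(\K/F)$-invariant, hence by descent defined over $F$'' is a universal-domain (monster-model) principle; for an arbitrary differentially closed $\K\supset F$ no homogeneity is available, and nothing rules out $\Aut_\delta(\K/F)$ being so small (conceivably trivial) that invariance carries no information. Note also that the generic-point detour is a non sequitur even internally: invariance of $H$ under $\Aut_\delta(\K/F)$ follows at once from $H$ being the zero set of $F$-equations, so the only thing your model-theoretic machinery was actually needed for is the final, unjustified, descent step.

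The standard mechanism avoids descent altogether and works with ideals. Write $H=\bold{V}(S)$ with $S\subset F\{Y\}$. In characteristic zero a reduced $F$-algebra stays reduced under any field extension, so the extension $\I(H)\cdot\K\{Y\}$ of the radical $\delta$-ideal $\I(H)$ is again a radical $\delta$-ideal; by the differential Nullstellensatz over the $\delta$-closed field $\K$ --- this is the one place where differential closedness legitimately enters --- the full ideal $\I_{\K}(H)\subset\K\{Y\}$ is the radical $\delta$-ideal generated by $S$, and since $\I(H)\cdot\K\{Y\}$ is a radical $\delta$-ideal squeezed between $[S]$ and $\I_{\K}(H)$, one gets $\I_{\K}(H)=\I(H)\cdot\K\{Y\}$. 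Now $\K\{Y\}\cong\K\otimes_F F\{Y\}$, and for $F$-subspaces $V,W$ of an $F$-vector space one has $(\K\otimes_F V)\cap(\K\otimes_F W)=\K\otimes_F(V\cap W)$; applying this to $V=\I(H)$ and $W=F[Y]$ yields $\I_{\K}(H)\cap\K[Y]=\left(\I(H)\cap F[Y]\right)\cdot\K[Y]$. The left-hand side is precisely the $\K$-ideal of the Zariski closure, so $\overline{H}=\bold{V}\left(\I(H)\cap F[Y]\right)$ is defined over $F$ with exactly the stated ideal. So your instinct that differential closedness of $\K$ must be used somewhere is right, but it enters through the Nullstellensatz, not through any universal-domain or automorphism-descent property; as written, your step three does not close.
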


\begin{definition}
Let $G$ be a linear differential algebraic group  defined over $F$. The irreducible component of $G$ containing the identity element $e$  is called the {\it identity component} of $G$ and denoted by $G^\circ$. The linear differential algebraic group $G^\circ$ is a $\delta$-subgroup of $G$ defined over $F$. The linear differential algebraic group
$G$ is said to be {\it connected} if $G = G^\circ$, which is equivalent to $G$ being an irreducible Kolchin-closed set \cite[page~906]{cassdiffgr}.
\end{definition}

\begin{definition}[{\cite{CassidyRep},\cite[Definition~6]{Tandifgrov}}]\label{defn:diffrep} Let $G$ be a linear differential algebraic group defined over $F$ and let $V$ be  a
finite-dimensional vector space over $F$. A  $\delta$-polynomial
group homomorphism  $\rho : G \to \GL(V)$ defined over $F$ is called a
{\it representation} of $G$ over $F$. We shall also say that $V$ is a \emph{$G$-module} over $F$. By a faithful (respectively, simple,  semisimple) $G$-module, we mean
a faithful (respectively, irreducible, completely reducible) representation $\rho:G \rightarrow \GL(V)$.
\end{definition}

The image of a $\delta$-polynomial group homomorphism $\varrho : G\to H$ is Kolchin closed \cite[Proposition~7]{cassdiffgr}. Moreover, if $\ker(\varrho)=\{e\}$, then $\rho$ is an isomorphism of linear differential algebraic groups between $G$ and $\rho(G)$ \cite[Proposition~8]{cassdiffgr}.

\begin{definition}[{\cite[Theorem~2]{Cassunip}}] A linear differential algebraic group $G$ is {\it unipotent} if one of 
the following equivalent conditions holds:
\begin{enumerate}
\item $G$ is conjugate to a differential algebraic subgroup  of the group  of unipotent upper triangular matrices;
\item $G$ contains no elements of finite order $>1\,$;
\item $G$ has a descending normal sequence  of differential algebraic subgroups 
$$G=G_0 \supset G_1 \supset \ldots \supset G_N =\{e\}$$
with $G_i/G_{i+1}$ isomorphic to a differential algebraic subgroup of the additive group $\bold{G}_a$.
\end{enumerate}
\end{definition}

One can  show that 
a linear differential algebraic group   $G$ defined over $F$ admits a  largest normal unipotent differential algebraic subgroup defined over $F$ \cite[Theorem~3.10]{diffreductive}.

\begin{definition} Let $G$ be a linear differential algebraic group defined over $F$. The  largest normal unipotent differential algebraic subgroup of $G$ defined over $F$ is  called the {\it unipotent radical} of $G$ 
and denoted by $R_u(G)$. The unipotent radical of a linear algebraic group $H$ is also denoted by $R_u(H)$.
\end{definition}

Note that,  for a linear differential algebraic group $G$,  we always have
$$\overline{R_u(G)} \subset R_u(\overline{G})$$ and this inclusion can be strict \cite[Example~3.17]{diffreductive}.

\subsubsection{Almost direct products and reductive linear differential algebraic group}
We  recall what  reductive linear differential algebraic groups are and how they decompose into almost direct products of tori and quasi-simple subgroups. 

\begin{definition}A linear differential algebraic group $G$ is said to be {\em simple} if $\{e\}$ and $G$ are the only normal differential algebraic subgroups of $G$.
\end{definition}

\begin{definition}
A \emph{quasi-simple} linear (differential) algebraic group is a finite central extension of a simple non-commutative linear (differential) algebraic group.
 \end{definition}
 
\begin{definition}[{\cite[Definition~3.12]{diffreductive}}]
A linear differential algebraic group $G$ defined over $F$ is said to be {\it reductive} if  $R_u(G) = \{e\}$.
\end{definition}

By definition, the following holds for linear differential algebraic groups: $$\text{simple}\implies\text{quasi-simple}\implies\text{reductive}.$$

\begin{example}$\SL_2$ is quasi-simple but not simple, while $\PSL_2$ is simple.
\end{example}

\begin{proposition}[{\cite[Remark~2.9]{MinOvSing}}]
Let $G \subset \GL_n$ be a linear differential algebraic group defined over $F$. If $\overline{G} \subset \GL_n$
is a reductive linear algebraic group, then $G$ is a reductive linear differential algebraic group.
\end{proposition}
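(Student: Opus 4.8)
The plan is to deduce the statement directly from the inclusion
$$
\overline{R_u(G)} \subset R_u\bigl(\overline{G}\bigr)
$$
recorded just before the statement, which holds for every linear differential algebraic group $G$. First I would note that, by definition, $R_u(G)$ is a unipotent normal differential algebraic subgroup of $G$, so in particular it is contained in its own Zariski closure, i.e.\ $R_u(G) \subset \overline{R_u(G)}$. Chaining this trivial containment with the displayed inclusion yields
$$
R_u(G) \subset \overline{R_u(G)} \subset R_u\bigl(\overline{G}\bigr).
$$

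Next I would invoke the hypothesis that $\overline{G}$ is a reductive linear algebraic group, which by the classical definition means precisely that $R_u\bigl(\overline{G}\bigr) = \{e\}$. Substituting this into the right-hand end of the chain above forces $R_u(G) = \{e\}$, and by the definition of reductivity for linear differential algebraic groups this is exactly the assertion that $G$ is reductive. This completes the argument.

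The proof is thus essentially a one-line consequence of the general inclusion $\overline{R_u(G)} \subset R_u(\overline{G})$, whose real content is that the Zariski closure of a unipotent differential algebraic group is again a unipotent algebraic group and therefore lands inside the (classical) unipotent radical of $\overline{G}$. Since that inclusion is stated earlier in the excerpt and may be assumed, I do not anticipate any genuine obstacle; the only care needed is to match the two notions of ``reductive'' (differential versus algebraic) through the common condition on unipotent radicals and to use that a set is contained in its Zariski closure.
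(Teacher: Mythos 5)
Your proof is correct and is exactly the intended argument: the paper gives no proof of its own beyond citing Remark~2.9 of Minchenko--Ovchinnikov--Singer, and the chain $R_u(G)\subset\overline{R_u(G)}\subset R_u\bigl(\overline{G}\bigr)=\{e\}$ you use is precisely the content of the inclusion the paper records immediately before the statement, combined with the two definitions of reductivity via trivial unipotent radicals. Nothing further is needed.
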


\begin{proposition}\label{prop:cr}
Let $G\subset\Gl(V)$ be a linear differential algebraic group. The following statements are equivalent:
\begin{enumerate}
\item the $G$-module $V$ is semisimple;
\item $V$ is semisimple as a $\overline{G}$-module, where $\overline{G}\subset\Gl(V)$ stands for the Zariski closure;
\item $\overline{G}$ is reductive;
\item $V$ is semisimple as a $\overline{G}^\circ$-module;
\item $V$ is semisimple as a $G^\circ$-module.
\end{enumerate}
\end{proposition}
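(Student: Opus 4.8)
The plan is to reduce all five conditions to the classical representation theory of linear algebraic groups in characteristic zero, the only genuinely differential input being a comparison between the $\delta$-module and the algebraic module structures on $V$.

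The crux is the following observation, which I would isolate as a lemma. For any $\delta$-subgroup $H\subseteq\GL(V)$ with Zariski closure $\overline H$, a subspace $W\subseteq V$ is $H$-stable if and only if it is $\overline H$-stable. Indeed, the stabilizer $\Stab(W)=\{g\in\GL(V)\mid gW\subseteq W\}$ is cut out by polynomial conditions on the matrix entries, so it is a Zariski-closed subgroup of $\GL(V)$; if it contains $H$, it therefore contains $\overline H$. Consequently the lattice of $H$-submodules of $V$ coincides with the lattice of $\overline H$-submodules, and since semisimplicity is precisely the statement that every submodule is a direct summand, $V$ is a semisimple $H$-module if and only if it is a semisimple $\overline H$-module. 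Applying this with $H=G$ gives $(1)\Leftrightarrow(2)$. To obtain $(4)\Leftrightarrow(5)$ I would apply the same lemma with $H=G^\circ$, after recording the standard topological fact that $\overline{G^\circ}=\overline{G}^\circ$: the Zariski closure of the Kolchin-connected, finite-index subgroup $G^\circ$ is Zariski-irreducible and of finite index in $\overline G$, hence coincides with the identity component $\overline G^\circ$.

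It then remains to prove $(2)\Leftrightarrow(3)\Leftrightarrow(4)$ entirely within the category of linear algebraic groups. For $(2)\Leftrightarrow(3)$ I would use the classical fact, valid in characteristic zero, that a linear algebraic group $K\subseteq\GL(V)$ is reductive if and only if the faithful module $V$ is semisimple. The implication \emph{reductive} $\Rightarrow$ \emph{semisimple} is linear reductivity in characteristic zero; for the converse, the normal unipotent subgroup $R_u(K)$ has a nonzero fixed vector on every nonzero invariant subspace by Kolchin's theorem, so $V^{R_u(K)}$ is a nonzero $K$-submodule, and semisimplicity forces $R_u(K)$ to act trivially on each simple summand, whence $R_u(K)=\{e\}$ by faithfulness. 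Finally, $(3)\Leftrightarrow(4)$ follows by applying this same equivalence to $\overline G^\circ\subseteq\GL(V)$, together with the identity $R_u(\overline G)=R_u(\overline G^\circ)$ (the unipotent radical is connected, hence lies in $\overline G^\circ$, and is characteristic there, hence normal in $\overline G$), which shows that $\overline G$ is reductive if and only if $\overline G^\circ$ is.

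The point requiring the most care is the comparison lemma between $G$-invariant and $\overline G$-invariant subspaces: once it is in place, the differential-algebraic problem collapses onto the purely algebraic assertions $(2)$--$(4)$, for which the standard theory applies verbatim. I would double-check that faithfulness is genuinely available wherever it is invoked (it is, since $\overline G$ and $\overline G^\circ$ are by construction subgroups of $\GL(V)$), and that treating \emph{semisimple as a $G^\circ$-module} via the comparison lemma with $H=G^\circ$ is legitimate, which is exactly what the identity $\overline{G^\circ}=\overline G^\circ$ guarantees.
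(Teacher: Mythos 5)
Your proof is correct and follows essentially the same route as the paper: the stabilizer lemma (a subspace is $G$-stable if and only if it is $\overline{G}$-stable, since the stabilizer is Zariski closed in $\Gl(V)$) gives (1)$\Leftrightarrow$(2), and combined with $\overline{G^\circ}=\overline{G}^\circ$ it gives (4)$\Leftrightarrow$(5), exactly as in the paper. The only difference is cosmetic: the paper cites the classical equivalences (2)$\Leftrightarrow$(3)$\Leftrightarrow$(4) from the literature, while you prove them inline via linear reductivity in characteristic zero, Kolchin's fixed-point theorem for the unipotent radical, and the identity $R_u\big(\overline{G}\big)=R_u\big(\overline{G}^\circ\big)$ — all of which are standard and correctly argued.
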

\begin{proof}
For every subspace $U\subset V$,
the set $N$ of elements $g\in\Gl(V)$ preserving $U$ is an algebraic subgroup of $\Gl(V)$. Therefore, $U$ is $G$-invariant if and only if it is $\overline{G}$-invariant: $$G\subset N\Leftrightarrow \overline{G}\subset N.$$ This implies (1)$\Leftrightarrow$(2).
The equivalences (2)$\Leftrightarrow$(3)$\Leftrightarrow$(4) are well-known (see, for example,~\cite[Chapter~2]{SpringerInv}).
Since the Kolchin topology contains the Zariski topology of $\Gl(V)$, $\overline{G^\circ}$ is Zariski irreducible, hence, equals $\overline{G}^\circ$. Applying (1)$\Leftrightarrow$(2) to the case of a connected $G$, we obtain (4)$\Leftrightarrow$(5).
\qed\end{proof}

\begin{definition}
Let $G$ be a group and $G_1,\dots,G_n$ some subgroups of $G$. We say that 
$G$ is the almost direct product of $G_1,\dots,G_n$ if 
\begin{enumerate}
\item the commutator subgroups $[G_i,G_j]=\{e\}$ for all $i \neq j\,$;
\item the morphism $$\psi :G_1\times \hdots \times G_n \rightarrow G,\quad (g_1,\dots,g_n) \mapsto g_1\cdot\hdots \cdot g_n$$
is an isogeny, that is, a surjective map with a finite kernel.  
\end{enumerate}
\end{definition}

We summarize some  results on the decomposition of
reductive, algebraic and differential algebraic, groups  in the theorem below. We refer to Definition~\ref{def:Kc}  
for the 
notation $G(C)$ with $G$ a linear (differential) algebraic group defined over $C$.

\begin{theorem}\label{thm:decompalmostdirectprodreductive}
Let $G \subset \GL_n$ be a linear differential algebraic group defined over $F$. Assume that  
 $\overline{G} \subset \GL_n $ is  a connected reductive algebraic group.
Then
\begin{enumerate}
\item\label{part1} $\overline{G}$ is an almost direct product of a torus $H_0$ and non-commutative 
normal quasi-simple linear algebraic groups $H_1,\dots,H_s\,$ defined over $\mathbb{Q}$;
\item\label{part2} $G$ is an almost direct product of a Zariski dense $\delta$-closed subgroup $G_0$ of $H_0$ and some $\delta$-closed subgroups $G_i$ 
of $H_i$ for $i=1,\dots,s\,$;
 \item\label{part3}  moreover , either $G_i=H_i$ or $G_i$ is conjugate by a matrix of $H_i$ to $H_i(C)\,$;
\end{enumerate}
The $H_i$'s are called the quasi-simple components of $\overline G\,$; the $G_i$'s are called the $\delta$-quasi-simple components 
of $G$. 
\end{theorem}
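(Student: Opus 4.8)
The plan is to dispatch (1) with classical structure theory and to reduce (2)--(3) to Cassidy's classification of Zariski-dense differential subgroups of quasi-simple groups, after passing from $\overline G$ to its direct-product cover. For (1), I would use the standard decomposition of a connected reductive group: $\overline G = H_0\cdot[\overline G,\overline G]$ with $H_0 := Z(\overline G)^\circ$ a torus (because $\overline G$ is reductive) and $[\overline G,\overline G]$ connected semisimple, the latter being the almost direct product of its minimal connected normal non-commutative subgroups $H_1,\dots,H_s$, which are the quasi-simple components. Because the characteristic is zero and $\K$ is algebraically closed, these canonical characteristic subgroups are defined over the prime field $\mathbb{Q}$; in particular they are defined over the constants $C$, which is exactly what will later allow me to invoke Cassidy's theorem and to make sense of $H_i(C)$.

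For (2)--(3) the obstruction is that $\overline G = H_0\cdots H_s$ is only an almost direct product, so projection to a factor is not a well-defined homomorphism on $\overline G$. I would remove this by working upstairs. Let $\widetilde H := H_0\times H_1\times\dots\times H_s$ and let $\psi\colon \widetilde H\to\overline G$ be the multiplication isogeny, with finite central kernel $Z$. Put $\widetilde G := \psi^{-1}(G)$: as the preimage of a $\delta$-closed set under a $\delta$-polynomial homomorphism it is a $\delta$-closed subgroup of $\widetilde H$, it contains $Z$, satisfies $\psi(\widetilde G)=G$, and is Zariski dense in $\widetilde H$ since $G$ is dense in $\overline G$ and $\psi$ is a finite surjection. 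Now the projections $p_i\colon\widetilde H\to H_i$ are genuine homomorphisms, so each $\widetilde G_i := p_i(\widetilde G)$ is $\delta$-closed (the image of a $\delta$-homomorphism is Kolchin closed) and Zariski dense in $H_i$.

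The heart of the proof is to show $\widetilde G = \widetilde G_0\times\widetilde G_1\times\dots\times\widetilde G_s$. For $i\ge 1$, $\widetilde G_i$ is a Zariski-dense $\delta$-subgroup of the quasi-simple $H_i$, so Cassidy's classification forces $\widetilde G_i = H_i$ or $\widetilde G_i$ to be $H_i$-conjugate to $H_i(C)$; this is precisely the dichotomy asserted in (3), and in either case $\widetilde G_i$ is connected and $\delta$-quasi-simple. Now $\widetilde K_i := \widetilde G\cap H_i$ (the kernel of the projection of $\widetilde G$ to the product of the remaining factors) is a normal $\delta$-subgroup of $\widetilde G_i$ that is Zariski dense in $H_i$. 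A $\delta$-quasi-simple group has no proper Zariski-dense normal $\delta$-subgroup, so $\widetilde K_i = \widetilde G_i$; that is, each semisimple factor $\widetilde G_i$ ($i\ge 1$) lies entirely inside $\widetilde G$. Consequently $\widetilde G\cap\prod_{i\ge1}H_i = \prod_{i\ge1}\widetilde G_i$, and a Goursat argument across the splitting $\widetilde H = H_0\times\prod_{i\ge1}H_i$ shows the remaining (torus) linking quotient is trivial, yielding $p_0(\widetilde G)=\widetilde G\cap H_0 =: \widetilde G_0$ and hence $\widetilde G = \widetilde G_0\times\widetilde G_1\times\dots\times\widetilde G_s$.

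Finally I would transport this through $\psi$. Setting $G_i := \psi(\widetilde G_i)$ produces $\delta$-closed subgroups with $G_i\subseteq\psi(H_i)=H_i$ and $G_i\subseteq G$; they commute pairwise because the $\widetilde G_i$ commute in the direct product, and $G=\psi(\widetilde G)=G_0G_1\cdots G_s$. The kernel of the multiplication map $G_0\times\dots\times G_s\to G$ is contained in the finite group $Z$, so this map is an isogeny and $G$ is the almost direct product of $G_0,\dots,G_s$ with $G_0$ Zariski dense in the torus $H_0$; this gives (2), and (3) is the Cassidy dichotomy recorded above (the conjugating element lies in $H_i$ because in characteristic zero over an algebraically closed field the relevant forms are inner). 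The decisive step, and the only one using genuinely differential input rather than formal group theory, is the vanishing of the linking quotients in the previous paragraph: it depends entirely on Cassidy's classification forcing each projection $\widetilde G_i$ to be connected and $\delta$-quasi-simple. A secondary, purely bookkeeping, difficulty is to keep track of the finite central kernel $Z$ so that the final multiplication map is surjective with finite kernel.
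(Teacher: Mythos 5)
Your overall architecture --- classical structure theory for (1), passage to the direct-product cover $\widetilde H = H_0\times H_1\times\dots\times H_s$, projections $\widetilde G_i=p_i(\widetilde G)$, and Goursat bookkeeping through the isogeny $\psi$ --- is sound, and most individual steps check out (the preimage $\widetilde G=\psi^{-1}(G)$ is indeed $\delta$-closed, Zariski dense, contains $\ker\psi$; granting your key claim, the normality argument forcing $\widetilde K_i=\widetilde G_i$, the identity $p_M(\widetilde G)=\prod_{i\ge 1}\widetilde G_i$ that kills the torus linking quotient, and the transport through $\psi$ are all correct). But there is a genuine gap at the step you yourself call decisive: you assert, with no argument, that $\widetilde K_i=\widetilde G\cap H_i$ is Zariski dense in $H_i$. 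What comes for free is only a dichotomy: $\widetilde K_i$ is normal in $\widetilde G$, so its Zariski closure is a normal algebraic subgroup of $\overline{\widetilde G}=\widetilde H$ contained in $H_i$, hence is either all of $H_i$ or finite central. Ruling out the finite case is the actual substance of the splitting, and Cassidy's quasi-simple dichotomy applied to each projection $\widetilde G_i$ does not accomplish it: if $\widetilde K_i$ were finite, then $\widetilde G$ would map with finite kernel onto its projection away from $H_i$, making it (up to finite center) the graph of a $\delta$-isomorphism between a quotient of $\widetilde G_i$ and a quotient of that projection, and one must show that no such graph can be Zariski dense in $H_i\times\prod_{j\neq i}H_j$. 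Excluding these linkings requires rigidity of $\delta$-homomorphisms between quasi-simple groups --- e.g., that $\delta$-isogenies between groups of the form $H_i(\k)$ modulo center are algebraic, and that abstract isomorphisms between the constant forms $H_i(C)$ twisted by field automorphisms of $C$ are not Kolchin closed. This is precisely the content of Cassidy's classification of Zariski-dense differential algebraic subgroups of \emph{semisimple} groups, not a formal Goursat consequence of the quasi-simple dichotomy factor by factor. The paper does not rederive any of this: its proof of parts (2)--(3) consists of citing \cite[Theorems~15 and~18]{Cassimpl} together with the proof of Lemma~4.5 of \cite{diffreductive}, while part (1) is \cite[Theorem~27.5, page~167]{Humphline}; your proposal, by contrast, uses only the single-factor dichotomy and leaves the multi-factor splitting unproved.

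A secondary inaccuracy: your justification of ``defined over $\mathbb{Q}$'' --- that the canonical characteristic subgroups of $\overline G$ are automatically defined over the prime field --- is wrong as stated, since $\overline G$ itself, and hence its characteristic subgroups as embedded in $\GL_n$, are a priori only defined over $F$. What is true, and what part (3) needs so that $H_i(C)$ makes sense and Cassidy's theorem applies, is that each $H_i$, being split quasi-simple over the algebraically (indeed differentially) closed field $\K$, admits a model defined over $\mathbb{Q}\subset C$, and one works after conjugating to such a model. This is repairable bookkeeping; the missing density of $\widetilde G\cap H_i$ is the real gap.
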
 
\begin{proof}
Part~\eqref{part1} can be found in \cite[Theorem~27.5, page~167]{Humphline}.
Parts~\eqref{part2} and~\eqref{part3} are contained in \cite[proof of Lemma 4.5]{diffreductive} and \cite[Theorems~15 and~18]{Cassimpl}.
\qed\end{proof}
\begin{remark}
As noticed in \cite[Section~5.3.1]{MinOvSing}, the decomposition of $\overline{G}$ as above can be made effective.
\end{remark}

\begin{proposition}\label{prop:tensorprod}
If $\nu: G_1\times G_2\to G$ is a surjective homomorphism of linear differential algebraic groups and $V$ is a simple $G$-module, then $V$, viewed as a $G_1\times G_2$-module via $\nu$, is isomorphic to $V_1\otimes V_2$, where each $V_i$ is a simple $G_i$-module.
\end{proposition}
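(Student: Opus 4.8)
The plan is to reduce the statement to the classical fact about tensor decompositions of simple modules over products of groups, transferred from the Zariski closures to the differential algebraic groups by means of Proposition~\ref{prop:cr}. First I would replace $G$ by the image of $\nu$, so that $\nu$ is genuinely surjective, and pass to Zariski closures: since $V$ is a simple $G$-module, Proposition~\ref{prop:cr} gives that $V$ is a semisimple $\overline{G}$-module, and in fact I would want to argue that $V$ is simple as a $\overline{G}$-module. The key observation here is that a subspace $U\subset V$ is $G$-invariant if and only if it is $\overline{G}$-invariant (this is exactly the stabilizer argument from the proof of Proposition~\ref{prop:cr}: the stabilizer of $U$ in $\GL(V)$ is Zariski closed), so the $G$-submodules and the $\overline{G}$-submodules of $V$ coincide. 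Hence $V$ is a simple $\overline{G}$-module.

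Next I would transport the product structure to the closures. The map $\nu$ induces a surjective homomorphism of linear algebraic groups $\overline{\nu}:\overline{G_1\times G_2}\to\overline{G}$, and because the Zariski closure of a product is the product of the Zariski closures, $\overline{G_1\times G_2}=\overline{G_1}\times\overline{G_2}$. Thus I obtain a surjective homomorphism of linear algebraic groups $\overline{G_1}\times\overline{G_2}\to\overline{G}$ through which $V$ becomes a simple module. Now I invoke the classical result for linear algebraic groups: a simple module over a product $\overline{G_1}\times\overline{G_2}$ (pulled back along a surjection) decomposes as $W_1\otimes W_2$ with each $W_i$ a simple $\overline{G_i}$-module. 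This is the standard statement over an algebraically closed field; it is proved, for instance, via the commuting actions and a double-commutant / density argument, and here the ground situation is over the differentially closed (hence algebraically closed) field $\K$.

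Finally I would descend back to the differential algebraic groups. Setting $V_i=W_i$ and restricting the $\overline{G_i}$-action to $G_i$, I get that $V\cong V_1\otimes V_2$ as $G_1\times G_2$-modules, where each $V_i$ is viewed as a $G_i$-module. It remains to check that each $V_i$ is a \emph{simple} $G_i$-module, not merely simple over $\overline{G_i}$. But simplicity of $V_i$ over $G_i$ is equivalent to simplicity over $\overline{G_i}$ by the same stabilizer argument as above (a subspace of $V_i$ is $G_i$-stable iff it is $\overline{G_i}$-stable), so this is immediate. I expect the main obstacle to be the careful bookkeeping in the first two steps, namely confirming that $V$ really is \emph{simple} (and not just semisimple) as a module over the closure, and that the surjectivity and product structure pass correctly to the Zariski closures; once the problem is placed over the algebraic groups $\overline{G_1}\times\overline{G_2}$, the tensor decomposition itself is a citable classical fact rather than something one needs to reprove.
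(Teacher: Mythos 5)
You have a genuine gap at the pivotal step. Your reduction hinges on the claim that $\nu$ ``induces a surjective homomorphism of linear algebraic groups $\overline{\nu}:\overline{G_1}\times\overline{G_2}\to\overline{G}$.'' This is false in general: a $\delta$-polynomial homomorphism of linear differential algebraic groups is continuous for the Kolchin topology, not the Zariski topology, and need not extend to an algebraic morphism of the Zariski closures. Concretely, the logarithmic derivative $\Gm\to\Ga$, $x\mapsto\delta(x)/x$, is a surjective $\delta$-polynomial homomorphism over the $\delta$-closed field $\k$ (every equation $\delta(x)=ax$ has a non-zero solution), and both groups are already Zariski closed, yet the only \emph{algebraic} homomorphism $\Gm\to\Ga$ is trivial. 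So $\overline{\nu}$ may simply not exist; without it, the action of $\overline{G_1}\times\overline{G_2}$ on $V$ is undefined, and the classical tensor-decomposition theorem you invoke has nothing to apply to. (Your other transfer steps are fine: the stabilizer argument from Proposition~\ref{prop:cr} does show that $G$- and $\overline{G}$-submodules of $V$ coincide, and $\overline{A\times B}=\overline{A}\times\overline{B}$ is a correct standard fact.)

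The strategy can be repaired by taking closures \emph{inside} $\GL(V)$ instead of trying to extend $\nu$: let $\rho:G\to\GL(V)$ be the representation and $H_i=\rho(\nu(G_i))$, which are Kolchin closed by \cite[Proposition~7]{cassdiffgr}. Since $H_1$ and $H_2$ commute elementwise and centralizers are Zariski closed, $\overline{H_1}$ and $\overline{H_2}$ commute, so $(a,b)\mapsto ab$ is an algebraic homomorphism $\overline{H_1}\times\overline{H_2}\to\GL(V)$ whose closed image contains the Zariski-dense subgroup $\rho(G)$ and hence equals $\overline{\rho(G)}$; your stabilizer argument makes $V$ simple over this image, the classical theorem then applies, and simplicity descends to $G_i$ as you say. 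Note also that in this paper a ``$G_i$-module'' means a \emph{differential} representation (Definition~\ref{defn:diffrep}), so you must additionally check that $G_i\to\GL(V_i)$ is $\delta$-polynomial; in the repaired version this holds because $G_i\to\overline{H_i}$ is $\delta$-polynomial and the $\overline{H_i}$-action on $V_i$ is rational. It is instructive that the paper's own proof avoids closures entirely: it runs the abstract isotypic-component argument directly on $G_1\times G_2$ (choose a simple $G_1$-submodule $V_1$, show $V$ is $V_1$-isotypic, give the multiplicity space a $G_2$-structure via the commutant, \cite[Exercise 11.30]{Vinberg}, with Schur's lemma over the algebraically closed $\k$), since the tensor decomposition of a finite-dimensional simple module over a product holds for abstract groups with no algebraicity input, and then settles differentiality by the one-line remark that each $V_i$ is isomorphic to a subrepresentation of $V$ — exactly the point your write-up leaves implicit.
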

\begin{proof}
Since $\nu$ is surjective, $V$ is simple as a $G_1\times G_2$-module.
Let $V_1$ be a simple (non-zero) $G_1$-submodule of $V$ and $U\subset V$ the sum of all $G_1$-submodules isomorphic to $V_1$. Since all elements of $G_2$ send $V_1$ to an isomorphic submodule, we obtain that $U$ is $G_1\times G_2$-invariant. Since $V$ is $G_1\times G_2$-simple, $U=V$. We choose a direct sum decomposition $$V =\bigoplus_{j \in J} U_j,\quad U_j \cong V_1\ \ \text{for all}\ \ j \in J,$$ and, for each $j\in J$, a non-zero $u_j \in U_j$, and let $V_2 = \Span_{j\in J}\{u_j\}\subset V$.  We see that, as $G_1$-modules, $V\cong V_1\otimes V_2$, where $G_1$ acts trivially on $V_2$.

By \cite[Exercise 11.30]{Vinberg}, every endomorphism of $V_1\otimes V_2$ commuting with the action of $G_1$ has the form $\id_{V_1}\otimes A$, where $A$ is an endomorphism of $V_2$. This means that $V_2$ has a structure of a $G_2$-module such that the $G_1$-module isomorphism $V\cong V_1\otimes V_2$  extends to a $G_1\times G_2$-module isomorphism. Since $V$ is $G_1\times G_2$-simple, $V_2$ is $G_2$-simple. It remains to note that the representation $G_i\to\Gl(V_i)$, $i=1,2$, is differential since it is isomorphic to a subrepresentation of the representation $G_i\to \GL(V)$. 
\qed\end{proof}

\begin{definition}
A connected linear differential algebraic group $T$ is called a \emph{$\delta$-torus} if there is an isomorphism $\alpha$ of $T$ onto a Zariski dense $\delta$-subgroup $T'\subset\left(\k^\times\right)^n$, $n \geq 0$.
\end{definition}  
Let $T'_C=\left(C^\times\right)^n$. By~\cite[Proposition~31]{cassdiffgr}, $T'_C\subset T'$. Let $T_C=\alpha^{-1}(T'_C)$. The $\delta$-subgroup $T_C$ does not depend on the choice of $\alpha$: since any differential homomorphism $\left(C^\times\right)^n\to\left(\k^\times\right)^m$ is monomial in each of the $m$ components, its image is contained in $\left(C^\times\right)^m$.

\begin{corollary}\label{cor:diag}
Let $G\subset\Gl(V)$ be a connected linear differential algebraic group. If the $G$-module $V$ is simple and non-constant, then there exists a $\delta$-torus $T\subset G$ such that $V$ is semisimple and non-constant as a $T$-module.
\end{corollary}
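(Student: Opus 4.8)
The plan is to reduce the statement to the two building blocks of a connected reductive group---a torus and quasi-simple factors---and to exhibit the required $\delta$-torus inside whichever factor is responsible for the non-constancy of $V$. First I would record that $V$ simple forces $V$ semisimple, so Proposition~\ref{prop:cr} gives that $\overline{G}$ is reductive; since $G$ is connected (Kolchin-irreducible), $\overline{G}$ is Zariski-irreducible, hence connected reductive, exactly as in the proof of Proposition~\ref{prop:cr}. Thus Theorem~\ref{thm:decompalmostdirectprodreductive} applies: $\overline{G}=H_0H_1\cdots H_s$ is an almost direct product of a torus $H_0$ and quasi-simple $H_1,\dots,H_s$, with $G=G_0G_1\cdots G_s$, where $G_0$ is Zariski dense in $H_0$ and, for $i\ge 1$, either $G_i=H_i$ or $G_i$ is conjugate to $H_i(C)$. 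The product map $G_0\times\cdots\times G_s\to G$ is surjective, so iterating Proposition~\ref{prop:tensorprod} yields $V\cong V_0\otimes V_1\otimes\cdots\otimes V_s$, where each $V_i$ is a simple $G_i$-module and the remaining factors act trivially. I would also note at the outset that for any $\delta$-torus $T$ the closure $\overline{T}$ is a torus, hence reductive, so $V|_T$ is automatically semisimple by Proposition~\ref{prop:cr}; only non-constancy needs to be arranged.

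Since a tensor product of modules that are each conjugate to constant ones is again constant, the non-constancy of $V$ forces some factor $V_j$ to be non-constant, and the proof splits according to which factor this is. If $j\ge 1$ and $G_j=H_j$, I would take $T$ to be a maximal torus of the algebraic group $H_j\subset G$: as an algebraic torus it is connected and Zariski dense in itself, hence a $\delta$-torus. Because $V_j$ is non-trivial and $H_j$ is quasi-simple, the kernel of $\rho_j$, being normal and containing the positive-dimensional $T$ if $T$ acted trivially, would be all of $H_j$; so $V_j|_T$ decomposes into characters with at least one non-trivial weight $a\mapsto a^\mu$, $\mu\neq 0$, which is non-constant on $T\cong(\k^\times)^r$, and since $V|_T$ is a direct sum of copies of $V_j|_T$ it is non-constant. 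If instead $j=0$, then $G_0$ is commutative, so the simple module $V_0$ is $1$-dimensional, given by a character $\chi$ with $\chi(G_0)\not\subset C^\times$; I would put $T=G_0^\circ$, which is a connected, Zariski-dense $\delta$-subgroup of $H_0$, hence a $\delta$-torus. For $g\in G_0$ some power $g^k$ lies in $G_0^\circ$, and $\delta(\chi(g)^k)=0$ forces $\chi(g)\in C^\times$ in characteristic zero; therefore $\chi|_{G_0^\circ}$ is still non-constant, and again $V|_T$ is a sum of copies of it.

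The remaining case, $j\ge 1$ with $G_j$ conjugate to $H_j(C)$, must be excluded, and this is the step I expect to be the main obstacle. Here one has to show that a representation of a \emph{constant} group $H_j(C)$ is always conjugate to constants, contradicting the non-constancy of $V_j$. The mechanism is that on $H_j(C)$ all entries are $\delta$-constants, so every derivative occurring in the $\delta$-polynomial map $\rho_j$ vanishes; hence $\rho_j$ is the restriction of an ordinary algebraic representation of $H_j$ defined over $\k$, and since $H_j$ is quasi-simple its algebraic representations are defined over $\Q\subset C$ up to isomorphism over $\k$. Consequently $\rho_j(H_j(C))$ is conjugate into $\GL(V_j)(C)$, i.e.\ $V_j$ is constant. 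Making this collapse of the differential structure precise---and checking the ``tensor of constants is constant'' reduction used above---is the only genuinely delicate point; the torus constructions in the other two cases are then routine, and in each case the $\delta$-torus $T\subset G$ produced satisfies the conclusion because $V|_T$ is non-constant and, as noted, automatically semisimple.
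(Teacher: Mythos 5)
Your proposal is correct and follows the paper's own proof essentially step for step: reductivity via Proposition~\ref{prop:cr}, the almost direct product decomposition of Theorem~\ref{thm:decompalmostdirectprodreductive}, the tensor factorization of Proposition~\ref{prop:tensorprod}, exclusion of the case $G_j\simeq H_j(C)$ via $\Q$-definability of algebraic representations of quasi-simple groups, and the same choices of $T$ (a maximal torus of $G_j$ for $j\ge 1$, the toric factor for $j=0$). The additional details you supply---non-constancy of the weights on the maximal torus, passing from $G_0$ to $G_0^\circ$, the ``tensor of constants is constant'' reduction, and deriving polynomiality of the representation on the constant points directly rather than citing \cite[Theorem~3.3]{diffreductive}---merely flesh out steps the paper leaves implicit.
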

\begin{proof}
Since $V$ is simple, $G$ is reductive by Proposition~\ref{prop:cr}.
By Theorem~\ref{thm:decompalmostdirectprodreductive}, $G$ decomposes  as an almost direct product of a $\delta$-torus $G_0$ and $\delta$-quasi-simple components $G_i$, $1\leq i\leq s$. By Proposition~\ref{prop:tensorprod}, $V$ is a tensor product of simple $G_i$-modules $W_i$. By \cite[Theorem~3.3]{diffreductive}, representations of $G_i$ on $W_i$ are polynomial, that is, extend to algebraic representations $\rho_i:\overline{G_i}\to\Gl(W_i)$. 

Since $V$ is non-constant, there is an $i$, $0\leq i\leq s$, such that $W_i$ is non-constant. If $i>0$, then $G_i=\overline{G_i}$. Indeed, otherwise $G_i\simeq H(C)$, where $H=\overline{G_i}$ is a quasi-simple algebraic group defined over $C$ (see Theorem~\ref{thm:decompalmostdirectprodreductive}). Since all algebraic representations of $H$ are defined over $\mathbb{Q}$ (see, for example, \cite[Section~5]{BorelLNM}), $\rho_i(G_i)$ is conjugate to constants, which contradicts the assumption on $W_i$. Thus, $G_i=\overline{G_i}$, and we can take $T$ to be a maximal torus of $G_i$ (see \cite[Sections~21.3-21.4]{Humphline}). If $i=0$, let $T=G_0$.
\qed\end{proof}

\subsubsection{Conjugation to constants}
Conjugation to constants will play an essential role in our arguments. We  recall what it means. As before, $\K$ is a differentially closed field containing $F$ and $C$ is the field  of $\delta$-constants of $\K$.
\begin{definition}\label{defn:connjtoconstants}
Let $G\subset\GL_n$ be a linear algebraic group over $F$. We say that $G$ is conjugate to constants if there exists $h \in\GL_n$ such that $hGh^{-1} \subset \GL_n(C)$. Similarly, we say that a representation $\rho: G \rightarrow \Gl_n$  is conjugate to constants if  $\rho(G)$ is conjugate to constants in $\Gl_n$. 
\end{definition}

\begin{proposition}\label{prop:almostdirectconjconstant}
Let $\rho:G\subset \Gl(W) \rightarrow \Gl(V)$ be  a representation   of  a linear differential algebraic group $G$
 such that $\overline{G} \subset \Gl(W)$ is a connected reductive linear algebraic group. Assume that $\rho$
 is defined over the field $C$. 
With notation of Theorem  \ref{thm:decompalmostdirectprodreductive}, assume that
$Z$ acts by constant weights on $V$ and that, for all $i=1,\dots,s$, either $H_i \ne G_i$ or  $\rho|_{H_i}$ is the identity. 
Then there exists $g \in \overline{G}$ such that 
$$\rho\big(gGg^{-1}\big) \subset \Gl(V)(C).$$
\end{proposition}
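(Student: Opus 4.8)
```latex
\bigskip
\noindent\textbf{Proof proposal.}
The plan is to reduce to constants one $\delta$-quasi-simple factor at a time, using the
almost direct product decomposition of Theorem~\ref{thm:decompalmostdirectprodreductive}
together with the tensor factorization of simple modules in Proposition~\ref{prop:tensorprod}.
First I would decompose $\overline{G}$ as the almost direct product of the torus $H_0$ and the
quasi-simple components $H_1,\dots,H_s$, and correspondingly write $G$ as the almost direct
product of $G_0,G_1,\dots,G_s$ with $G_i\subset H_i$. Since the almost direct product map
$\psi:G_0\times\dots\times G_s\to G$ is an isogeny, a $G$-module is the same as a module for the
product on which the finite kernel acts trivially, and I may analyze the action component by
component. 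The hypothesis splits the components into two types: for each $i\ge 1$ with
$H_i\ne G_i$ we know (by Theorem~\ref{thm:decompalmostdirectprodreductive}\eqref{part3}) that
$G_i$ is conjugate inside $H_i$ to the constant points $H_i(C)$, and for the remaining $i$ the
restriction $\rho|_{H_i}$ is trivial; the torus part $Z=H_0$ acts by constant weights by assumption.

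\smallskip
Next I would handle each factor. Decompose $V$ into simple $\overline{G}$-summands; by
Proposition~\ref{prop:tensorprod} each simple summand is an external tensor product
$W_0\otimes W_1\otimes\dots\otimes W_s$ of simple $H_i$-modules. On a factor $H_i$ with
$\rho|_{H_i}$ trivial there is nothing to conjugate. On a factor with $H_i\ne G_i$, the
corresponding tensor slot $W_i$ carries an \emph{algebraic} representation of $H_i$ (the
representation of $G_i$ extends to $H_i$ since $G_i=H_i(C)$ is Zariski dense), which, being an
algebraic representation of a quasi-simple group defined over $\mathbb Q$, is defined over
$\mathbb{Q}\subset C$; restricting to $H_i(C)$ then gives a representation with entries in $C$ in
a suitable basis, i.e. a conjugate of the $G_i$-action lands in $\Gl(W_i)(C)$. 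The torus $H_0$,
acting by constant weights, is simultaneously diagonalizable with constant eigenvalues on each
weight space, hence after one conjugation by an element of $\overline{G}$ its image lies in
$\Gl(V)(C)$. Combining the chosen conjugating elements for the separate factors into a single
$g\in\overline{G}$ (using commutativity $[H_i,H_j]=\{e\}$, so the conjugations do not interfere),
I obtain $\rho(gGg^{-1})\subset\Gl(V)(C)$.

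\smallskip
The main obstacle I expect is bookkeeping the tensor decomposition so that the separate
conjugations assemble into one global element of $\overline{G}$ rather than of $\Gl(V)$: the
conjugating elements for the individual components are a priori only well-defined up to the
action on each tensor slot, and I must verify that they can be realized by honest elements
$h_i\in H_i\subset\overline{G}$ whose product $g=h_0h_1\cdots h_s$ conjugates the whole of
$\rho(G)$ into $\Gl(V)(C)$ simultaneously across all simple summands of $V$. Concretely, the
delicate point is that a basis diagonalizing/constant-izing one factor must be chosen compatibly
with the tensor structure on every isotypic piece at once; this is where the constant-weight
hypothesis on $Z$ and the alternative ``$H_i\ne G_i$ or $\rho|_{H_i}=\mathrm{id}$'' are used in an
essential way, since they guarantee that each slot independently admits a $C$-rational basis and
that the only genuinely non-algebraic factors ($G_i$ conjugate to $H_i(C)$) contribute through
$\mathbb{Q}$-defined representations.
```
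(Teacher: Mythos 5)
Your proposal contains a genuine gap, and it stems from never using the one hypothesis that does all the work in the paper's proof: that $\rho$ is \emph{defined over $C$}. The paper's argument is short and entirely source-side. For each $i$ with $H_i\neq G_i$, Theorem~\ref{thm:decompalmostdirectprodreductive}\eqref{part3} supplies $g_i\in H_i$ conjugating $G_i$ into $H_i(C)$; set $g=\prod_{i\notin S}g_i$, where $S=\{i\:|\:H_i=G_i\}$. Writing any $h\in G$ as $h=zh_1\cdots h_s$ and using $[H_i,H_j]=\{e\}$ together with $\rho(H_i)=\{1\}$ for $i\in S$, one gets $\rho\big(ghg^{-1}\big)=\rho(z)\prod_{i\notin S}\rho\big(g_ih_ig_i^{-1}\big)$. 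Since $\rho$ is a $\delta$-polynomial map with coefficients in $C$, it sends matrices with entries in $C$ to matrices with entries in $C$, so each $\rho\big(g_ih_ig_i^{-1}\big)$ is constant, and $\rho(z)$ is constant by the constant-weight assumption. No decomposition of $V$ into simple summands, no appeal to Proposition~\ref{prop:tensorprod}, and no choice of bases in tensor slots is needed. Your first paragraph matches this skeleton; everything after it is a detour.

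The detour is not merely superfluous, it breaks the proof at exactly the point you flag at the end but do not resolve. Replacing $C$-definedness of $\rho$ by the $\mathbb{Q}$-rationality of algebraic representations of quasi-simple groups yields a $C$-structure on each tensor slot $W_i$ only \emph{after} a change of basis $P_i\in\GL(W_i)$, and conjugation by $\mathrm{id}\otimes\cdots\otimes P_i\otimes\cdots\otimes\mathrm{id}$ is an element of $\GL(V)$ that is in general not of the form $\rho(g)$ for any $g\in\overline{G}$: the image $\rho(H_i)\subset\GL(W_i)$ is a proper quasi-simple subgroup, so $P_i$ typically lies outside it. The statement demands conjugation on the source side by $g\in\overline{G}$; target-side basis changes are not permitted. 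At best your route would establish the weaker claim that $\rho(G)$ is conjugate to constants in $\GL(V)$ in the sense of Definition~\ref{defn:connjtoconstants}, and even that requires verifying the slot-wise $C$-structures can be chosen compatibly across all isotypic components at once. The same defect affects your torus step: you assert that ``after one conjugation by an element of $\overline{G}$'' the image of $H_0$ becomes constant, but diagonalizing $\rho(H_0)$ is again a target-side operation; the paper instead reads the constant-weight hypothesis, combined with $C$-definedness of $\rho$, as directly giving $\rho(z)\in\GL(V)(C)$ with no conjugation at all. The fix is to delete the tensor analysis, keep your first paragraph, and convert $g_ih_ig_i^{-1}\in H_i(C)$ into $\rho\big(g_ih_ig_i^{-1}\big)\in\GL(V)(C)$ by citing that $\rho$ is defined over $C$.
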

\begin{proof}
Let $S =\{i \:|\: H_i=G_i \}$.  By assumption, $\rho(H_i)=\{1\}$ for all $i \in S$.
By Theorem \ref{thm:decompalmostdirectprodreductive}, for all $i \notin S$, there exists $g_i \in G_i$ such that
$g_i H_ig_i^{-1} \subset G_i(C)$. Set $$g= \prod_{i\in S} g_i \in G.$$ Let $h \in G$. Since 
$G$ is the almost direct product of $Z$ and of its  $\delta$-quasi-simple components, there exist 
$z \in Z$ and, for $i \in \{1,\ldots,s\}$, an element $h_i \in H_i$ such that $h=z h_1\cdot \ldots\cdot h_s$. Now,
$$
\rho\big(g hg ^{-1}\big) = \rho(z) \prod_{i\notin S} \rho\big(g_i h_i g_i^{-1}\big).
$$
Since $\rho$ is defined over the constants  and $g_ih_ig_i^{-1} \in G_i(C)$ for all $i \notin S$, we find that $$\rho\big(g_ih_ig_i^{-1}\big) \subset \Gl(V)(C).$$ Since $\rho(z)$ is also constant, the same holds for $\rho\big(gh g^{-1}\big)$. \qed\end{proof}

\subsection{Parameterized differential modules}
In this section, we recall the basic definitions of differential modules and prolongation functors for differential modules with parameters. We then continue with the notion of complete integrability of differential modules and its relation to conjugation to constants of parameterized differential Galois groups. We also show a new result, Proposition~\ref{prop:connectedcompconjconst}, which relates the conjugation to constants of a linear differential algebraic group and of its identity component. 
\subsubsection{Differential modules and prolongations}

Let $K$ be  a $\Delta=\{\p,\delta\}$-field. We denote by $\k$ (respectively, $C$) the field of $\p$ (respectively, $\D$)-constants of $K$.
We assume for simplicity that $(\k,\delta)$ is {\bf differentially closed} (this assumption was relaxed in \cite{GGO,wibdesc,LSN}).
Therefore, unless  explicitly mentioned, any Kolchin-closed set considered in the rest of the paper is a subset of some $\k^n$.
\begin{definition}
  A $\p$-module
$ \cM $ over $K$ is a left $K[\p]$-module that is a finite-dimensional vector space over $K$.
\end{definition}

Let $\cM$ be a $\p$-module over $K$
and let  $\{e_1,\dots,e_n\}$ be a $K$-basis of $\cM$. Let $A=(a_{i,j}) \in K^{n \times n}$ be the matrix defined
by 
\begin{equation}\label{eq:dm}
\p (e_i)= - \sum_{j=1}^n a_{j,i} e_j,\quad i=1,\dots,n.
\end{equation} Then, for any element $m=\sum_{i=1}^n y_i e_i$, where 
$Y=(y_1,
 \ldots ,
 y_n)^T \in K^n$, we have $$\p(m) = \sum_{i=1}^n \p(y_i)e_i - \sum_{i=1}^n \left( \sum_{j=1}^n a_{i,j} y_j\right)e_i.$$ 
 Thus, the equation $\p(m)=0$ translates into the  linear differential system $\p(Y)=AY$. 
 \begin{definition}\label{defn:systmodule}
Let $\cM$  be a $\p$-module over $K$
and  $\{e_1,\dots,e_n\}$ be a $K$-basis of $\cM$.
 We say that the linear differential
 system $\p(Y)= AY$, as above,  is associated  to the  $\p$-module $\cM$ (via the choice of a $K$-basis).  
 Conversely, to a given  
 linear differential system $\p(Y)=AY$, $A=(a_{i,j}) \in K^{n\times n}$, one associates a $\p$-module
 $\cM$ over $K$, namely $\cM=K^n$ with the standard basis $(e_1,\dots,e_n)$ and action of $\p$ given by~\eqref{eq:dm}.
  \end{definition}

Another choice of a $K$-basis $X = BY$, where $B \in \GL_n(K)$,
 leads to the differential system $$\p(X) = (B^{-1}AB-B^{-1}\p(B))X.$$
\begin{definition}
We say that a linear differential system $\p(X)=\tilde{A} X$, with $\tilde{A} \in K^{n \times n}$, is $K$-equivalent (or gauge equivalent over $K$) to a linear differential system $\p(X)=AX$, with  $A \in K^{n \times n}$, if there exists $B \in \GL_n(K)$ such that $$\tilde{A}=B^{-1}AB-B^{-1}\p(B).$$
  \end{definition}

 One has the following correspondence between linear differential systems and linear differential equations.  
 For $L=\p^n +a_{n-1}\p^{n-1}+\ldots +a_0 \in K[\p]$, one can consider the  companion matrix
 $$
 A_L=\begin{pmatrix}
 0&1& \hdots & 0 \\
 0&\ddots&\ddots& \vdots \\
 \vdots &\ddots &0 & 1 \\
 -a_0& -a_1& \dots &-a_{n-1} 
 \end{pmatrix}.
 $$
 The differential system $\p Y=A_LY$ induces a $\p$-module structure
 on $K^n$, which we denote by $\mathcal L$. Conversely, the Cyclic vector lemma \cite[Proposition~2.9]{vdPutSingerDifferential} states
 that any $\p$-module is isomorphic to a $\p$-module $\mathcal L$, of the above form, provided $\k\subsetneq K$.

  \begin{definition} A
 morphism  of $\p$-modules over $K$ is  a  homomorphism of $K[\p]$-modules. 
 \end{definition}

One can  consider  the category  $\Diff_K$ of $\p$-modules over $K$:

 \begin{definition}
We can define the following constructions in $\Diff_K$:
\begin{enumerate}
\item The direct sum of two $\p$-modules, $\cM_1$ and $\cM_2$, is $\cM_1 \oplus \cM_2$ together with the action of
$\p$ defined by $$\p(m_1\oplus m_2)= \p(m_1)\oplus \p( m_2).$$ 
\item The tensor product of two $\p$-modules, $\cM_1$ and $\cM_2$, is $\cM_1 \otimes_K \cM_2$ together with the action of
$\p$ defined by $$\p(m_1 \otimes m_2)= \p(m_1)\otimes m_2 +m_1 \otimes \p( m_2).$$ 
\item
The unit object $\bold{1}$ for the tensor product  is
the field $K$ together with  the left $K[\p]$-module structure given by $$(a_0+ a_1 \p +\dots +a_n\p^n)(f)= a_0f+\dots+ a_n \p^n(f)$$ for $f,a_0,\dots,a_n \in K$. 
\item The internal Hom of  two $\p$-modules $\cM_1, \cM_2$ exists in $\Diff_K$ and 
is denoted by $\underline{\Hom}(\cM_1,\cM_2)$. It consists of the $K$-vector space $\Hom_K(\cM_1,\cM_2)$
of  $K$-linear maps from $\cM_1$ to $\cM_2$ together with the action of $\p$ given by the formula 
$$\p u(m_1)= \p(u(m_1))-u(\p m_1).$$ The dual $\cM^ *$ of a $\p$-module $\cM$ is the $\p$-module $\underline{\Hom}(\cM,\bold{1})$.
\item An endofunctor  $D : \Diff_K\to \Diff_K$, called the prolongation functor, is defined as follows: 
 if $\cM$ is an object of $\Diff_K$ corresponding to the
linear differential system $\p (Y)=A Y$, then $D(\cM)$ corresponds to the linear differential system $$\p(Z)=\begin{pmatrix} A & \delta(A) \\ 0& A \end{pmatrix} Z.$$
\end{enumerate}
\end{definition}

The construction of the prolongation functor  reflects
the following idea. If $U$ is a fundamental solution matrix of $\p(Y)=AY$
in some $\Delta$-field extension  $F$ of $K$, that is, $\p(U)=AU$ and $U \in \GL_n(F)$, then
$$
 \p(\delta U) =\delta(\p U)=\delta(A) U +A \delta (U).
$$
Then, $\begin{pmatrix} U & \delta(U) \\ 0& U \end{pmatrix}$ is a fundamental solution matrix of $\p(Z)=\begin{pmatrix} A & \delta(A) \\ 0& A \end{pmatrix} Z$.
 Endowed with all these constructions, it follows from \cite[Corollary~3]{OvchTannakian} that the category $\Diff_K$ is  a {\em $\delta$-tensor category} (in the sense of \cite[Definition~3]{OvchTannakian} and \cite[Definition~4.2.1]{MosheTAMS}).

In this paper, we will not  consider the whole category $\Diff_K$ but the $\de$-tensor subcategory generated by a 
$\p$-module. More precisely, we have the following definition.  

\begin{definition}
 Let $\cM$ be an object of $\Diff_K$. We denote   by $\{\cM\}^{\otimes, \delta}$ 
 the smallest full subcategory of $\Diff_K$ that 
contains $\cM$ and is closed under all operations of linear algebra (direct sums, tensor products, duals, and subquotients)
and under $D$. The category  $\{\cM\}^{\otimes, \delta}$ is  a $\delta$-tensor category over~$\k$. We also  denote   by $\{\cM\}^{\otimes}$ the full tensor 
subcategory of $\Diff_K$ generated by $\cM$.   Then, $\{\cM\}^{\otimes}$ is a tensor category over~$\k$.
\end{definition}

Similarly, the category $\Vect_\k$ of finite-dimensional 
$\k$-vector spaces is a $\delta$-tensor category.  The prolongation functor on $\Vect_\k$ is defined as follows: for  a $\k$-vector space
$V$, the $\k$-vector space $D(V)$  equals  $\k[\delta]_{\leq 1}\otimes _\k V$, where $\k[\delta]_{\leq 1}$ is considered as the right $\k$-module
of $\delta$-operators up to order $1$ and $V$ is viewed as a left $\k$-module.

\begin{definition}\label{defn:fiberfunctor}
Let  $\cM$ be an  object of $\Diff_K$.  A $\delta$-fiber functor $\o :\{\cM\}^{\otimes, \delta} \rightarrow \Vect_\k$ is
an exact, faithful, $\k$-linear, tensor compatible functor  together with a natural isomorphism  between $D_{\Vect_\k} \circ \omega$ and 
$\omega\circ D_{ \{\cM\}^{\otimes, \delta}}$ \cite[Definition~4.2.7]{MosheTAMS}, where the subscripts emphasize the category on which  we perform the prolongation. The pair $\big(\{\cM\}^{\otimes, \delta}, \omega\big)$ is called a $\delta$-Tannakian category.
\end{definition}

\begin{theorem}[{\cite[Corollaries~4.29 and~6.2]{GGO}}]
Let $\cM$ be an object of $\Diff_K$.
Since $\k$ is $\delta$-closed,  the category $\{\cM\}^{\otimes, \delta}$ admits a $\delta$-fiber functor and 
 any two 
 $\delta$-fiber functors are naturally isomorphic.
\end{theorem}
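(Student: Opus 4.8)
The plan is to transport the classical neutral Tannakian formalism into the $\delta$-tensor setting, splitting the statement into an existence part and a uniqueness part. In both, the hypothesis that $\k$ is $\delta$-closed is exactly what makes the argument work, playing the role that algebraic closedness of the base field plays in ordinary Tannakian duality: on one side it guarantees that enough solutions exist to build a fiber functor, and on the other it forces the relevant torsor to be trivial.

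\textbf{Existence.} I would realize a $\delta$-fiber functor through a parameterized Picard--Vessiot (PPV) extension of $K$ for $\cM$. Fix a $K$-basis of $\cM$ and the associated system $\p(Y)=AY$. Because $\k$ is $\delta$-closed, a PPV ring $R$ for this system exists: it is a $\p$-simple $\Delta$-algebra over $K$ generated by the entries of a fundamental solution matrix $U \in \GL_n(R)$ and by $1/\det U$, and its ring of $\p$-constants is exactly $\k$. I then define, for an object $\cN$ of $\{\cM\}^{\otimes,\delta}$,
$$
\omega(\cN) = \ker\big(\p \mid \cN \otimes_K R\big),
$$
the $\k$-vector space of $\p$-horizontal elements. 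Standard PPV theory shows that $\omega$ is exact, faithful, $\k$-linear, compatible with $\otimes$, $\oplus$, duals and subquotients, and that $\dim_\k \omega(\cN) = \dim_K \cN$. The one genuinely $\delta$-theoretic point is the natural isomorphism $D_{\Vect_\k}\circ\omega \cong \omega\circ D$, which I would verify on the generator and propagate by tensor functoriality: since $D(\cM)$ corresponds to $\left(\begin{smallmatrix} A & \delta(A) \\ 0 & A \end{smallmatrix}\right)$, the matrix $\left(\begin{smallmatrix} U & \delta(U) \\ 0 & U \end{smallmatrix}\right)$ is a fundamental solution matrix (this is precisely the computation $\p(\delta U)=\delta(A)U+A\delta(U)$ already recorded above), so its columns identify $\omega(D(\cM))$ with $\k[\delta]_{\le 1}\otimes_\k \omega(\cM)=D_{\Vect_\k}(\omega(\cM))$, the extra block $\delta(U)$ matching $\delta\otimes(-)$. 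It then remains to check that this identification is natural in $\cM$ and respects the coherence data of the prolongation structure.

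\textbf{Uniqueness.} Given two $\delta$-fiber functors $\omega_1,\omega_2$, I would form the functor on $\k$-$\delta$-algebras
$$
P = \underline{\operatorname{Isom}}^{\otimes,\delta}(\omega_1,\omega_2),
$$
whose $R$-points are the $\otimes$- and $\delta$-compatible natural isomorphisms from $\omega_1$ to $\omega_2$ over $R$. By the $\delta$-Tannakian reconstruction, $P$ is representable by a Kolchin-closed set over $\k$ and is a torsor under the $\delta$-Galois group $\underline{\Aut}^{\otimes,\delta}(\omega_1)$, a linear differential algebraic group over $\k$. Being a torsor, $P$ acquires a point over some $\delta$-field extension of $\k$, hence is non-empty; by the very definition of $\delta$-closedness, applied to the defining $\delta$-polynomials of $P$, this forces $P(\k)\neq\emptyset$. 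A point of $P(\k)$ is exactly a natural isomorphism $\omega_1\cong\omega_2$, which proves uniqueness.

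The main obstacle is the $\delta$-Tannakian reconstruction underlying uniqueness: establishing that $\underline{\operatorname{Isom}}^{\otimes,\delta}(\omega_1,\omega_2)$ is representable by a differential algebraic variety and carries the torsor structure under the $\delta$-Galois group, all while tracking the prolongation-compatibility coherences (this is the content of the cited \cite{GGO}, built on the $\delta$-Tannakian formalism of \cite{MosheTAMS,OvchTannakian}). On the existence side, the analogous delicate point is constructing the PPV ring with $\p$-constants equal to $\k$ and verifying that the solution-space functor is genuinely $\delta$-compatible, rather than merely a fiber functor of the underlying tensor category $\{\cM\}^{\otimes}$. In both parts, $\delta$-closedness of $\k$ is the hypothesis that cannot be dropped, since it simultaneously secures the existence of the PPV extension and the triviality of the $\delta$-torsor.
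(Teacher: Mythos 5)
The paper does not prove this statement at all---it is imported verbatim from \cite[Corollaries~4.29 and~6.2]{GGO}---and your outline correctly reconstructs the argument of the cited sources: existence via the Cassidy--Singer/GGO construction of a PPV ring $R$ (a $\Delta$-simple quotient of $K\{Y,1/\det Y\}$, whose ring of $\p$-constants equals $\k$ precisely because $\k$ is $\delta$-closed) together with the horizontal-sections functor $\cN \mapsto \ker\big(\p\,|\,\cN\otimes_K R\big)$, and uniqueness via triviality over the $\delta$-closed field $\k$ of the $\operatorname{Isom}^{\otimes,\delta}$-torsor under the parameterized Galois group, which is of finite type here since the category is generated by the single object $\cM$. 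Two small caveats, neither fatal: the PPV ring is $\Delta$-simple rather than $\p$-simple as you wrote, and nonemptiness of $\operatorname{Isom}^{\otimes,\delta}(\omega_1,\omega_2)$ over some $\delta$-field extension is not formal from the word ``torsor'' but is exactly the $\delta$-analogue of Deligne's local isomorphy of fiber functors---a point you correctly identify and locate in the $\delta$-Tannakian reconstruction of \cite{GGO}.
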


\begin{definition}\label{defn:difftannakautogroup}
Let $\cM$ be an object of $\Diff_\k$ and $\o :\{\cM\}^{\otimes, \delta} \rightarrow \Vect_\k$ be a $\delta$-fiber functor. 
The group $\Gal^\delta(\cM)$ of $\delta$-tensor isomorphisms  of $\omega$ is defined as follows. It consists
of  the elements $ g\in\GL(\o(\cM))$ that stabilize $\omega(\mathcal{V})$ for  every $\p$-module  $\mathcal{V}$ obtained from $\cM$ by applying the linear constructions (subquotient, direct sum, tensor product, and dual), and the 
prolongation functor. The action of $g$ on $\omega(\mathcal V)$ is obtained by applying the same constructions to $g$. We call $\Gal^\delta(\cM)$ the parameterized differential Galois group of $(\cM,\omega)$, or of $\cM$ when there is no confusion.\end{definition}

\begin{theorem}[{\cite[Theorem~2]{OvchTannakian}}]\label{thm:difftanequ}
Let $\cM$ be an object of $\Diff_K$ and $\o :\{\cM\}^{\otimes, \delta} \rightarrow \Vect_\k$ be a $\delta$-fiber functor. The group  $\Gal^\delta(\cM) \subset \GL(\o(\cM))$ is a linear differential algebraic group defined over $\k$,  and $\o$ induces an equivalence of  categories between $\{\cM\}^{\otimes, \delta}$
and the category of finite-dimensional  representations of $\Gal^\delta(\cM)$.
\end{theorem}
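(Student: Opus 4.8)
The plan is to deduce this $\delta$-Tannakian statement from classical (non-differential) Tannakian duality, using the prolongation functor $D$ to recover the differential structure. First I would forget the $\delta$-structure entirely and regard $\big(\{\cM\}^{\otimes,\delta},\omega\big)$ as an ordinary neutral Tannakian category over $\k$: the functor $\omega$ is exact, faithful, $\k$-linear, and tensor compatible, so the classical reconstruction theorem produces an affine group scheme $\mathcal G=\Aut^{\otimes}(\omega)$ over $\k$ together with an equivalence of tensor categories between $\{\cM\}^{\otimes,\delta}$ and $\operatorname{Rep}_\k(\mathcal G)$. Because $\{\cM\}^{\otimes,\delta}$ is generated as a tensor category not by $\cM$ alone but by all its prolongations $D^k(\cM)$, $k\ge 0$, the group $\mathcal G$ is in general only pro-algebraic; the role of the differential structure is to cut it down to a genuine linear differential algebraic group sitting inside $\GL(\omega(\cM))$.

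The key mechanism is the natural isomorphism $\omega\circ D\cong D_{\Vect_\k}\circ\omega$ built into the definition of a $\delta$-fiber functor. Concretely, for every object $\mathcal N$ one has $\omega(D\mathcal N)\cong \k[\delta]_{\le 1}\otimes_\k\omega(\mathcal N)$, and I would use this to show that the action of an element $g\in\Gal^\delta(\cM)\subset\GL(\omega(\cM))$ on $\omega(D\cM)$ is forced to be the prolongation matrix
$$\begin{pmatrix} g & \delta(g)\\ 0 & g\end{pmatrix},$$
exactly as for the fundamental solution matrix of $D\cM$ in the motivation for $D$. In particular the entries of $\delta(g)$ appear. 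Hence the requirement in Definition~\ref{defn:difftannakautogroup} that $g$ stabilize $\omega(\mathcal V)$ for every subquotient $\mathcal V$ of a construction formed from the $D^k(\cM)$ translates, for each $k$, into polynomial relations among the entries of $g,\delta(g),\dots,\delta^k(g)$. These are $\delta$-polynomial equations over $\k$, so $\Gal^\delta(\cM)$ is a Kolchin-closed subgroup of $\GL(\omega(\cM))$, i.e.~a linear differential algebraic group defined over $\k$. This establishes the first assertion.

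For the equivalence, via $\omega$ every object $\mathcal V$ of $\{\cM\}^{\otimes,\delta}$ acquires an action of $\Gal^\delta(\cM)$ on $\omega(\mathcal V)$, functorially in $\mathcal V$ and compatibly with $D$; by the previous paragraph this action is by $\delta$-polynomial maps, so it is a differential representation in the sense of Definition~\ref{defn:diffrep}. This defines a functor from $\{\cM\}^{\otimes,\delta}$ into the category of finite-dimensional representations of $\Gal^\delta(\cM)$. Faithfulness and fullness follow by restricting the classical equivalence between $\{\cM\}^{\otimes,\delta}$ and $\operatorname{Rep}_\k(\mathcal G)$ and matching $\mathcal G$-morphisms with $\Gal^\delta(\cM)$-morphisms; exactness is inherited from exactness of $\omega$. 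The content left to verify is essential surjectivity: every finite-dimensional differential representation of $\Gal^\delta(\cM)$ must arise, up to isomorphism, as $\omega(\mathcal V)$ for some $\mathcal V$. For this I would invoke the representation theory of linear differential algebraic groups, namely that every differential representation embeds as a subquotient of a direct sum of objects obtained from the standard representation $\omega(\cM)$ and its dual by tensor operations and prolongations $D^k$ — precisely the operations generating $\{\cM\}^{\otimes,\delta}$.

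I expect the essential surjectivity step to be the main obstacle: it requires the $\delta$-analogue of the classical fact that representations of an affine group scheme are generated by a faithful one, now with prolongations playing the role of the missing ``differentiation'' directions, together with the verification that the reconstructed differential structure on $\mathcal G$ (the derivation induced on its differential Hopf algebra by the $\delta$-fiber functor) is exactly the one whose $\operatorname{Spec}$ recovers $\Gal^\delta(\cM)$ as a differential algebraic group. Once the standard representation is known to be faithful and differentially generating, this matches the two $\delta$-tensor structures and the asserted equivalence follows.
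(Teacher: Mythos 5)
The paper does not prove this statement: it is imported verbatim as \cite[Theorem~2]{OvchTannakian}, so there is no in-paper proof to compare against, and your proposal has to be measured against the proof in that reference. Your first assertion is essentially fine as you present it, with one small correction of emphasis: in Definition~\ref{defn:difftannakautogroup} the action of $g$ on $\omega(D\cM)$ is not ``forced'' by an argument --- it is \emph{defined} to be the construction applied to $g$, i.e.\ the block matrix $\begin{pmatrix} g & \delta(g)\\ 0 & g\end{pmatrix}$, so the only content is that stabilizing each $\omega(\mathcal V)$ is a family of $\delta$-polynomial conditions on the entries of $g$; since arbitrary intersections of Kolchin-closed sets are closed, Kolchin-closedness follows (you do not even need Ritt--Noetherianity), and compatibility of prolongation with products, $\delta(gh)=\delta(g)h+g\,\delta(h)$, gives the subgroup property.

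The genuine gaps are both in the equivalence part, and your own closing paragraph only names one of them. First, fullness does not ``follow by restricting the classical equivalence.'' The classical reconstruction gives $\Hom$ in the category as $\mathcal G$-equivariant maps for the pro-algebraic $\mathcal G=\Aut^{\otimes}(\omega)$, and $\Gal^\delta(\cM)$ sits inside $\mathcal G$ as the subgroup of $D$-compatible automorphisms; a priori this subgroup could be much smaller, with strictly more equivariant maps and invariant subspaces, in which case the functor to representations of $\Gal^\delta(\cM)$ would fail to be full. What is needed is that $\Gal^\delta(\cM)$ is Zariski dense in each classical Galois group $\Gal\big(\cM\oplus D\cM\oplus\cdots\oplus D^k\cM\big)$, the analogue of Proposition~\ref{prop:zariskidensegalois groups}, and this density is a substantive theorem that your sketch never establishes. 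Second, essential surjectivity rests on the statement that every differential representation of a linear differential algebraic group $G\subset\GL(V)$ is a subquotient of tensor constructions and prolongations applied to $V$ and $V^*$. This is true, but it is itself one of the main theorems of the differential Tannakian theory --- in \cite{OvchTannakian} both it and the density statement are obtained by constructing the representing differential Hopf algebra of the $\delta$-tensor automorphism functor and arguing with differential comodules, which is precisely the machinery your sketch replaces with an invocation. So your proposal is a correct high-level map of the proof in the cited source, but the two steps you would need to carry out (density for fullness, the generation theorem plus the identification of the reconstructed differential structure for essential surjectivity) are exactly where the work lies, and they are absent.
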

\begin{definition}\label{defn:trivial objects}
We say that a $\p$-module $\cM$ over $K$ is \textit{trivial} if it is either $(0)$ or isomorphic as $\p$-module over $K$ to  $\bold{1}^n$ for 
some positive integer $n$. For $G$ a linear differential algebraic group over $\k$, we say that 
a $G$-module $V$ is \textit{trivial} if   $G$ acts identically  on $V$. 
\end{definition}
\begin{remark}
For $\cM$ an object of $\Diff_K$ and $\o :\{\cM\}^{\otimes, \delta} \rightarrow \Vect_\k$  a $\delta$-fiber functor, the following holds:
a $\p$-module $\cN $ in $\{\cM\}^{\otimes, \delta}$ is trivial if and only if $\o(\cN)$ is a trivial $\Gal^\delta(\cM)$-module.
\end{remark}

\begin{remark}
The parameterized differential Galois group depends a priori on the choice of a $\delta$-fiber functor $\o$.  However, since  two  $\delta$-fiber functors for $\{\cM\}^{\otimes,\delta}$ are naturally isomorphic, we find  that the parameterized differential Galois groups that these functors define are isomorphic as linear differential algebraic groups over $\k$. Thus, if it is not necessary, we will speak of the parameterized differential Galois group of $\cM$ without mentioning the 
$\delta$-fiber functor.
\end{remark}

Forgetting the action of $\delta$,  one can similarly define the group $\Gal(\cM)$ of tensor isomorphisms of $\omega : \{\cM\}^{\otimes} \rightarrow \Vect_\k$. By \cite{Deligne:categoriestannakien}, the group $\Gal(\cM) \subset \GL(\o(\cM))$ is a linear algebraic group defined over $\k$, and $\omega$ induces an equivalence of categories between $\{\cM\}^{\otimes}$ and the 
category of  $\k$-finite-dimensional representations of $\Gal(\cM)$. We call  $\Gal(\cM)$ the {\em differential Galois group} of 
$\cM$ over~$K$.

\begin{proposition}[{\cite[Proposition~6.21]{HardouinSinger}}]\label{prop:zariskidensegalois groups}
If $\cM$ is an object of $\Diff_K$ and $\o :\{\cM\}^{\otimes, \delta} \rightarrow \Vect_\k$ is a $\delta$-fiber functor, then $\Gal^\delta(\cM)$ is a Zariski dense subgroup of $\Gal(\cM)$ (see Proposition \ref{propo:defzariksidenserelationalggroupanddiffalggroup}).
\end{proposition}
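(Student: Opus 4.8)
The plan is to identify the Zariski closure of $\Gal^\delta(\cM)$ with $\Gal(\cM)$ by playing the two Tannakian reconstructions against each other. First I would record the evident inclusion $\Gal^\delta(\cM)\subseteq\Gal(\cM)$ inside $\GL(\o(\cM))$: the category $\{\cM\}^{\otimes}$ is a full subcategory of $\{\cM\}^{\otimes,\delta}$, as it uses only the linear-algebra constructions and not the prolongation $D$. Hence any $g$ stabilizing $\o(\mathcal V)$ for every $\mathcal V$ built by linear algebra and $D$ in particular stabilizes those built by linear algebra alone, so a $\delta$-tensor automorphism of $\o$ restricts to a tensor automorphism of $\o|_{\{\cM\}^{\otimes}}$. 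Since $\Gal(\cM)$ is Zariski closed, the Zariski closure $G:=\overline{\Gal^\delta(\cM)}\subseteq\GL(\o(\cM))$ then also satisfies $G\subseteq\Gal(\cM)$, and it remains to prove the reverse inclusion $\Gal(\cM)\subseteq G$.

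For the reverse inclusion I would show that $\{\cM\}^{\otimes}$, together with the fiber functor $\o$, is exactly the representation category $\operatorname{Rep}_\k(G)$, so that Tannakian reconstruction forces $\Gal(\cM)=G$. The tautological action of $\Gal^\delta(\cM)$ on $\o(\cM)$ extends, by the very definition of the Zariski closure, to a faithful action of $G$ on $\o(\cM)$; applying the linear-algebra constructions to this faithful $G$-module produces $G$-module structures on $\o(\mathcal V)$ for every object $\mathcal V$ of $\{\cM\}^{\otimes}$, compatibly with $\o$. The one point needing care is that the subquotients formed inside $\{\cM\}^{\otimes}$, which are a priori only $\Gal^\delta(\cM)$-stable, are in fact $G$-stable. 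This is the same closed-stabilizer observation used in the proof of Proposition~\ref{prop:cr}: for a subspace $U\subseteq\o(T(\cM))$ of any linear-algebra construction $T$, the set $\{g:gU\subseteq U\}$ is a Zariski-closed subgroup of $\GL(\o(T(\cM)))$, so $U$ is $\Gal^\delta(\cM)$-invariant if and only if it is $G$-invariant. Hence $\{\cM\}^{\otimes}$ is a full tensor abelian subcategory of $\operatorname{Rep}_\k(G)$ closed under subquotients.

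Since $\o(\cM)$ is a faithful $G$-module, the classical fact that a faithful finite-dimensional representation tensor-generates the full representation category (via direct sums, tensor products, duals, and subquotients) shows that $\o$ refines to an equivalence of tensor categories $\{\cM\}^{\otimes}\simeq\operatorname{Rep}_\k(G)$. By the reconstruction theorem of \cite{Deligne:categoriestannakien} used to define $\Gal(\cM)$, one has $\Gal(\cM)=\Aut^{\otimes}\big(\o|_{\{\cM\}^{\otimes}}\big)=\Aut^{\otimes}\big(\o|_{\operatorname{Rep}_\k(G)}\big)=G$. Combining this with the inclusion of the first paragraph gives $\Gal(\cM)=G=\overline{\Gal^\delta(\cM)}$, which is precisely the assertion that $\Gal^\delta(\cM)$ is Zariski dense in $\Gal(\cM)$.

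The step I expect to be the genuine obstacle is the identification $\{\cM\}^{\otimes}=\operatorname{Rep}_\k(G)$: one must simultaneously check that the fiber functors match, so that the two reconstructions yield subgroups of the same $\GL(\o(\cM))$ rather than merely abstractly isomorphic groups, and that the abelian structure is preserved, i.e. that passing to $\Gal^\delta(\cM)$-subquotients does not enlarge the category beyond $\operatorname{Rep}_\k(G)$. Once the closed-stabilizer argument pins down that $\Gal^\delta(\cM)$-stability and $G$-stability coincide, the remaining verifications are formal.
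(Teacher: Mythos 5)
Your argument is sound, but be aware that the paper contains no proof of this proposition at all: it is quoted from \cite[Proposition~6.21]{HardouinSinger}, where the argument is Picard--Vessiot-theoretic rather than Tannakian --- the PPV group acts on the (non-parameterized) PV subfield $K(U)\subset K_\cM$, its fixed field there is $K$ since already $K_\cM^{\Galdelta(K_\cM/K)}=K$, and the classical Galois correspondence then forces the Zariski closure of its image to be all of $\Gal(K_\cM/K)$. Your reconstruction argument is therefore a genuinely different route: it stays entirely inside the concrete stabilizer definitions of $\Galdelta(\cM)$ and $\Gal(\cM)$ used in this paper, needs no PPV extension, and yields the stronger structural statement that $\o$ identifies $\{\cM\}^{\otimes}$ with $\operatorname{Rep}_\k\big(\overline{\Galdelta(\cM)}\big)$; the fixed-field argument is shorter once PV theory is available, but says nothing categorical.

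One step of yours is under-justified, though fillable by a citation the paper already supplies. Writing $G=\overline{\Galdelta(\cM)}$, the closed-stabilizer observation from Proposition~\ref{prop:cr} only shows that a subspace $U\subset\o(\cN)$, for $\cN$ in $\{\cM\}^{\otimes}$, is $\Galdelta(\cM)$-stable if and only if it is $G$-stable; it does \emph{not} show that such a $U$ is of the form $\o(\cW)$ for a $\p$-submodule $\cW\subset\cN$, which is exactly what your claim that $\{\cM\}^{\otimes}$ sits inside $\operatorname{Rep}_\k(G)$ as a full subcategory closed under subquotients requires (without it, a $G$-subquotient of a tensor construction need not be in the essential image, and fullness of $\o$ on morphisms is likewise unproved). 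The missing ingredient is the $\delta$-Tannakian equivalence of Theorem~\ref{thm:difftanequ} (from \cite{OvchTannakian}): since $\o$ is an equivalence between $\{\cM\}^{\otimes,\delta}$ and representations of $\Galdelta(\cM)$, every $\Galdelta(\cM)$-stable subspace of $\o(\cN)$ equals $\o(\cW)$ for some $\p$-submodule $\cW\subset\cN$ (necessarily in $\{\cM\}^{\otimes}$), and $\Galdelta(\cM)$-equivariant maps come from morphisms of $\p$-modules; equivariance and stability then transfer to $G$ by your closure argument. With that cited, your proof is complete; you can even trim the endgame by replacing essential surjectivity onto all of $\operatorname{Rep}_\k(G)$ with Chevalley's theorem: choose a linear-algebra construction $T(\cM)$ and a line $L\subset\o(T(\cM))$ with $G=\Stab_{\GL(\o(\cM))}(L)$; since $L$ is $\Galdelta(\cM)$-stable, $L=\o(\cW)$ for some $\cW$ in $\{\cM\}^{\otimes}$, so every $g\in\Gal(\cM)$ stabilizes $L$ and hence lies in $G$.
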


\begin{definition} A {\em parameterized Picard--Vessiot extension}, or {\em PPV extension} for short,  of $K$ for a $\p$-module $\mathcal M$ over $K$
is a $\Delta$-field extension $K_\cM$ that is generated over $K$ by the entries of a fundamental solution matrix $U$ of a differential system $\p(X)=AX$
associated to $\cM$   and such that $K_\cM^\partial = K^\partial$. The field $K(U)$ is a {\em Picard--Vessiot extension} ({\em PV extension} for short), that is, a $\p$-field extension of $K$ generated
by the entries of a fundamental solution matrix $U$ of $\p(X)=AX$ such that  $K(U)^\p=K^\p$.
\end{definition}

A parameterized Picard--Vessiot extension associated to a $\p$-module $\cM$ depends a priori on the choice of a $K$-basis of $\cM$, which is equivalent to the choice of a linear differential system associated to $\cM$. However, one can show that gauge equivalent differential systems lead to parameterized Picard--Vessiot extensions that are isomorphic as $K$-$\Delta$-algebras.
In \cite{Deligne:categoriestannakien},  Deligne showed that a fiber functor corresponds to a Picard--Vessiot extension; it is shown in \cite[Theorem~5.5]{GGO}  that the notions of $\delta$-fiber functor and parameterized Picard--Vessiot extension are  equivalent.

\begin{definition}
Let $\cM$ be a $\p$-module over $K$. Let $\p(X)=AX$ be a differential system associated to $\cM$ over $K$ with $A \in K^{n \times n}$ and let $K_\cM$ be a PPV extension for  $\p(X)=AX$ over $K$.
The {\em parameterized Picard--Vessiot group}, or {\em PPV-group} for short is denoted by $\Galdelta(K_\cM/K)$ and  is the set of $\Delta$-automorphisms of $K_\cM$ over $K$,
whereas the {\em Picard--Vessiot group} (usually called the differential Galois group in the literature) of $K_\cM$ over $K$, by definition, is  the set of $\partial$-automorphisms of a Picard--Vessiot extension $K(U)$
of $K$ in $K_\cM$, where $U \in \GL_n(K_\cM)$ is a fundamental solution matrix of $\partial(X)=AX$. This group is denoted by $\Gal(K_\cM/K)$.
\end{definition}

\begin{remark}
Let $U \in \GL_n(K_\cM)$ be a fundamental solution matrix of $\p(X)=AX$. For any $\tau \in \Galdelta(K_\cM/K)$, there exists 
$[\tau]_U \in \GL_n(\k)$ such that $\tau(U)=U[\tau]_U$. The map $$
 \Galdelta(K_\cM/K) \rightarrow \GL_n,\quad \tau \mapsto [\tau]_U
$$
is an embedding and identifies $\Galdelta(K_\cM/K)$ with a $\delta$-closed subgroup of $\GL_n$. One can  show that another choice
of fundamental solution matrix as well as another choice of gauge equivalent linear differential system yield a conjugate subgroup in $\GL_n$.
Similarly, one can represent $\Gal(K_\cM/K)$ as a linear algebraic subgroup of $\GL_n$. With  these representations of the Picard--Vessiot groups, one can show that Picard--Vessiot groups and differential Galois groups  are isomorphic in the parameterized and non-parameterized cases.
\end{remark}
In the PPV theory,  a  Galois correspondence holds between
differential algebraic subgroups of the PPV-group and $\Delta$-sub-field extensions of $K_\cM$ (see \cite[Theorem~6.20]{HardouinSinger} for more details). Moreover, the $\delta$-dimension of $\Galdelta(\cM)$ coincides with the $\delta$-transcendence degree of $K_\cM$ over $K$ (see  \cite[page~374 and Proposition~6.26]{HardouinSinger} for the definition of the $\delta$-dimension and $\delta$-transcendence degree and the proof of their equality). Moreover, the defining equations of the parameterized differential Galois group reflect
the differential algebraic relations among the solutions (see \cite[Proposition~6.24]{HardouinSinger}). Therefore, given a $\p$-module $\cM$ over $K$, 
we find that the defining equations of the parameterized  differential Galois group $\Gal^\delta(\cM)$ over $\k$ determine the differential algebraic relations between the solutions in $K_\cM$ over $K$.

\begin{definition}
A $\p$-module $\cM$ is said to be completely reducible if, for every $\p$-submodule $\cN$ of $\cM$, there exists
a $\p$-submodule $\cN'$ of $\cM$ such that $\cM=\cN \oplus \cN'$. We say that a $\p$-operator is completely reducible if 
the associated $\p$-module is completely reducible.  
\end{definition}
By \cite[Exercise 2.38]{vdPutSingerDifferential},
a $\p$-module is completely reducible if and only if its  differential Galois group is a reductive linear algebraic group.
Moreover, for  a completely reducible $\p$-module $\cM$, any object in $\{\cM\}^\otimes$ is completely
reducible.

\subsubsection{Isomonodromic differential modules}

\begin{definition}[{\cite[Definition 3.8]{cassisinger}}]
Let $A \in K^{n \times n}$. We say that the linear differential system $\partial Y=AY$ is isomonodromic (or completely integrable) over $K$ if there 
exists $B \in K^{n\times n}$ such that $$ \p(B)-\delta(A)=AB -BA.$$ \end{definition}
\begin{remark}
One can show that a linear differential system $\p Y =A Y$ 
is isomonodromic if and only if there exists a $\Delta$-field extension $L$ of $K$  
and $B \in K^{n\times n}$ such that the system 
$$ 
\left\{ 
\begin{aligned}
\p Y & = AY \\
\delta Y & =BY
\end{aligned} \right.
$$
has a fundamental solution matrix with coefficients in $L$.
\end{remark}

We recall a characterization of complete integrability in terms of the PPV theory.

\begin{proposition}[{\cite[Proposition 3.9]{cassisinger}}]\label{prop:compintconjconts}
Let $\cM$ be a $\p$-module over $K$ and $\p(Y)=A Y$, with $A \in K^{n\times n}$, be an associated linear differential system.
The following statements are equivalent:
\begin{itemize}
\item  $\Galdelta(\cM)$ is conjugate to constants in $\GL(\o(\cM))$ 
 (see Definition \ref{defn:connjtoconstants});
\item The linear differential system $\p(Y)=AY$ is isomonodromic over $K$.  
\end{itemize} 
 \end{proposition}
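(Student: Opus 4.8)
The plan is to pass to a parameterized Picard--Vessiot extension $K_\cM$ of $K$ for the system $\p(Y)=AY$, fix a fundamental solution matrix $U\in\GL_n(K_\cM)$ with $\p(U)=AU$, and study the matrix $\tilde B:=\delta(U)\,U^{-1}\in K_\cM^{n\times n}$. Using $\p(U)=AU$, $\p(U^{-1})=-U^{-1}A$, and the commutation of $\p$ and $\delta$, a direct computation gives
$$\p(\tilde B)-\delta(A)=A\tilde B-\tilde B A,$$
so $\tilde B$ always satisfies an equation of exactly the shape of the integrability condition, but with entries a priori only in $K_\cM$. The whole proposition then reduces to showing that one can choose $U$ so that $\tilde B$ descends to $K$ precisely when $\Galdelta(\cM)$ is conjugate to constants. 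Throughout I use the identification $\Galdelta(\cM)\cong\Galdelta(K_\cM/K)$, the relation $\tau(U)=U[\tau]_U$ with $[\tau]_U\in\GL_n(\k)$ for $\tau\in\Galdelta(K_\cM/K)$, and the PPV Galois correspondence, whose bottom term gives $K_\cM^{\Galdelta(K_\cM/K)}=K$.

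For the implication that conjugation to constants forces isomonodromy, I would first invoke Definition~\ref{defn:connjtoconstants} to replace $U$ by $Uh^{-1}$ for a suitable $h\in\GL_n(\k)$; since $h$ is a $\p$-constant, $Uh^{-1}$ is again a fundamental solution matrix of $\p(Y)=AY$, and the new Galois matrices satisfy $[\tau]_U\in\GL_n(\k)$ with $\delta[\tau]_U=0$. Applying $\tau$ to $\tilde B=\delta(U)U^{-1}$ and using $\delta[\tau]_U=0$ gives $\tau(\tilde B)=\delta(U[\tau]_U)\,[\tau]_U^{-1}U^{-1}=\delta(U)U^{-1}=\tilde B$, so every entry of $\tilde B$ is fixed by the whole Galois group. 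By the Galois correspondence these entries lie in $K$, and setting $B:=\tilde B$ yields $\p(B)-\delta(A)=AB-BA$, i.e. the system is isomonodromic.

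For the converse I would start from $B\in K^{n\times n}$ with $\p(B)-\delta(A)=AB-BA$ and measure the defect $E:=\tilde B-B$. Subtracting the two relations gives $\p(E)=AE-EA$, from which $\p(U^{-1}EU)=0$ follows using $\p(U^{-1})=-U^{-1}A$; hence $M:=U^{-1}EU$ is a $\p$-constant, so $M\in\k^{n\times n}$. Since $(\k,\delta)$ is $\delta$-closed, the constant-coefficient system $\delta(P)=-MP$ admits a fundamental solution $P\in\GL_n(\k)$. Because $P$ is a $\p$-constant, $U':=UP$ remains a fundamental solution matrix of $\p(Y)=AY$, and the computation $\delta(UP)=(B+E)UP-U(MP)=(B+E)UP-EUP=BU'$ shows $\delta(U')=BU'$. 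Relative to $U'$ one then has, for every $\tau$, $\delta(\tau(U'))=\tau(BU')=BU'[\tau]_{U'}$, while $\delta(U'[\tau]_{U'})=BU'[\tau]_{U'}+U'\delta[\tau]_{U'}$; comparing forces $\delta[\tau]_{U'}=0$. Thus the Galois group lies in $\GL_n$ over the $\delta$-constants of $\k$, i.e. is conjugate to constants.

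The main obstacle is this converse construction: one must produce a single fundamental solution matrix solving both $\p(Y)=AY$ and $\delta(Y)=BY$ simultaneously, and the only place the hypotheses truly bite is in solving the auxiliary constant-coefficient system $\delta(P)=-MP$ inside $\GL_n(\k)$, which is exactly where the differential closedness of $\k$ is indispensable. The remaining bookkeeping --- that changing the fundamental solution matrix conjugates the representation, so that being $\delta$-constant in the $U'$-representation is the same as $\Galdelta(\cM)\subset\GL(\o(\cM))$ being conjugate to constants --- is routine.
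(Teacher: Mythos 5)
Your proof is correct, and since the paper offers no argument of its own for this statement---it is quoted directly from \cite[Proposition 3.9]{cassisinger}---the right comparison is with that source, whose proof yours essentially reproduces: one direction shows $\delta(U)U^{-1}$ is Galois-invariant (hence in $K$) after normalizing the fundamental matrix so that the Galois matrices are $\delta$-constant, and the other corrects $U$ by a gauge so that it solves both systems simultaneously, forcing $\delta[\tau]_{U'}=0$. The one pleasant variation is that you produce the correction $P$ by solving $\delta(P)=-MP$ with $M=U^{-1}(\tilde B-B)U\in\k^{n\times n}$ inside $\GL_n(\k)$ via $\delta$-closedness of $\k$, which keeps the entire construction inside the fixed PPV extension $K_\cM$ rather than appealing to an auxiliary $\Delta$-extension for the integrable system.
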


The proof of the following result was provided to the authors by Michael F. Singer and will be used in the proof of Proposition~\ref{prop:connectedcompconjconst}.
\begin{lemma}\label{lem:inverse}
Given a linear differential algebraic group $G\subset \GL_n$ defined over a differentially closed  field $(\k,\dt)$ and any $\Delta = \{\dx,\dt\}$-field $K$ such that $K^\p=\k$,  there exists a $\Delta$-field extension $F$ of $K$ such that $F^\p=\k$ and  $G$ can be realized as a parameterized differential Galois group over $F$ in the given faithful representation of $G \subset \GL_n $.
\end{lemma}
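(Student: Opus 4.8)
The plan is to realize $G$ as a Galois group by constructing, over a suitable enlargement $F$ of $K$, a linear $\partial$-system $\partial(Y)=MY$ whose matrix $M$ lies in the Lie algebra $\Lie(G)$ but has \emph{non-constant} entries. The non-constancy is essential: by Proposition~\ref{prop:compintconjconts}, a matrix $M$ with entries in $\k$ would make $\partial(Y)=MY$ isomonodromic and force its group to be conjugate to constants, which is in general far smaller than $G$. Concretely, I would take a generic point $M$ of $\Lie(G)$ over $K$, so that the entries of $M$ satisfy only the $\delta$-linear relations defining $\Lie(G)$ and are otherwise $\Delta$-transcendental over $K$, and set $F=K\langle M\rangle$. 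Since $\Lie(G)$ is defined over $\k=K^\partial$ and the entries of $M$ are chosen $\partial$-generically, one has $F^\partial=\k$; in particular $\partial$ is nontrivial on $F$ even when $K=\k$.

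Because $F^\partial=\k$ is $\delta$-closed, a parameterized Picard--Vessiot extension $\tilde F$ of $F$ for $\partial(Y)=MY$ exists and satisfies $\tilde F^\partial=F^\partial=\k$ by definition. Let $H=\Galdelta(\tilde F/F)\subseteq\GL_n$ denote its group in the given faithful representation. Since $M\in\Lie(G)(F)$, a fundamental solution matrix may be taken inside $G$, so the solution torsor lies in $G$ and hence $H\subseteq G$. For the reverse inclusion I would argue that the $\partial$-generic choice of $M$ makes a fundamental solution matrix a \emph{generic} point of $G$ over $F$: the PPV ring then has no $\delta$-relations beyond those defining $G$, so it is the coordinate ring $F\otimes_\k\k\{G\}$ of the trivial $G$-torsor, which forces $H=G$. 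Equivalently, one checks that the $\delta$-transcendence degree of $\tilde F$ over $F$ equals the $\delta$-dimension of $G$, using the identification of the $\delta$-dimension of $\Galdelta$ with the $\delta$-transcendence degree of the extension; combined with $H\subseteq G$ this shows that $H$ is $\delta$-dense in $G$, hence $H=G$.

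The main obstacle is exactly this exactness step: a single generic $M$ must simultaneously defeat every proper $\delta$-closed subgroup of $G$. For a fixed candidate $H_0\subsetneq G$, the statement that $M$ is gauge equivalent over $F$ into $\Lie(H_0)$ is a $\delta$-algebraic condition on the entries of $M$, and the point is that a $\partial$-generic $M$ avoids all of them at once. The natural way to organize this is to use Ritt--Noetherianity of the Kolchin topology to reduce to finitely many maximal such conditions and then rule each out by transcendence. Two supporting technicalities must be verified in parallel: that $M\in\Lie(G)(F)$, with $\Lie(G)$ understood in Cassidy's differential-algebraic sense, genuinely induces a $\Delta$-structure preserving the defining $\delta$-ideal of $G$, so that the solutions really lie in $G$; and that the field of $\partial$-constants stays exactly $\k$ after both the adjunction of $M$ and the passage to $\tilde F$.
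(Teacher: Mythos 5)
There is a genuine gap, and in fact two. The first is structural: your construction can never realize a non-connected group. Since $\Lie(G)=\Lie(G^{\circ})$ in Cassidy's sense, a generic $M\in\Lie(G)$ carries no information about the component group; in the extreme case of a finite $G$ you get $M=0$ and the trivial Galois group, and in general your own upper-bound argument (fundamental matrix in the torsor of $G$) lands you inside $G^{\circ}$, not $G$. But the lemma is stated for an arbitrary $\delta$-closed subgroup $G\subset\GL_n$, and it is applied in Proposition~\ref{prop:connectedcompconjconst} precisely to groups that need not be connected --- the entire point there is to pass from a hypothesis on $G^{\circ}$ to a conclusion about $G$. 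So the Lie-algebra approach cannot prove the statement in the generality in which it is used.

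The second gap is the lower bound $H=G$, which you correctly identify as the crux but do not close, even for connected $G$. The condition to be avoided --- that $M$ is gauge equivalent over $F$ into $\Lie(H_0)$ for some proper $\delta$-closed subgroup $H_0\subset G$ --- is existentially quantified over $B\in\GL_n(F)$ and ranges over the family of all proper $\delta$-subgroups; it is not a Kolchin-closed condition on the entries of $M$, and Ritt--Noetherianity does not reduce this family to finitely many conditions, since the proper subgroups of $G$ do not form a descending chain of closed sets. In the non-parameterized theory the analogous statement (a generic element of $\Lie(G)$ yields Galois group exactly $G$) is Juan's theorem on generic Picard--Vessiot extensions, a substantial result proved only for connected groups, and no parameterized analogue is invoked or supplied here. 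The paper's proof avoids both problems by a different and much cheaper device: instead of a generic \emph{coefficient} matrix it takes a generic \emph{solution} matrix $Z$ of $\Delta$-differential indeterminates, sets $A=(\p Z)Z^{-1}$, and lets $E$ be the $\Delta$-field generated over $K$ by the entries of $A$. Then every $g\in\GL_n(\k)$ induces an automorphism $Z\mapsto Zg$ fixing $E$, so the PPV-group over $E$ is all of $\GL_n$ with no genericity argument needed, and an arbitrary $\delta$-subgroup $G$ --- connected or not --- is then realized over the fixed field $F$ of $G$ in $E_{\cM}$ by the parameterized Galois correspondence. Your constants verification ($F^{\p}=\k$ throughout) is the one step that matches the paper's proof, which handles it as in Magid's Lemma~2.14.
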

\begin{proof} We first consider the ``generic'' case: 
we construct a $\Delta$-field extension $E$ of $K$ with no new $\dx$-constants such that $\GL_n$ is a parameterized differential Galois group of a $\partial$-module $\cM$ over $E$. Assume we have constructed $E$ and let $E_\cM$ be a PPV extension of $\cM$ over $E$.
 For any differential algebraic subgroup $G$ of $\GL_n$, let $F$ be the fixed field of $G$ in $E_\cM$, i.e., the elements of $E_\cM$ fixed by $G$.  By the PPV correspondence,  $G$ is the parameterized differential Galois group of $E_\cM$ over $F$. Moreover, $$K^\p=\k \subset F^\partial\subset E_\cM^\partial = \k.$$
To construct the fields $E_\cM$ and $E$ for $\GL_n$, we shall follow the construction from \cite[pages~87--89]{Magid}.
Let $\{z_{i,j}\}$ be a set of $n^2$ $\Delta$-differential indeterminates over $K$. Let $E_\cM = K\langle z_{i,j}\rangle_\Delta$ be a $\Delta$-field of differential rational functions in these indeterminates. Note that the $\dt$-constants of $E_\cM$ are $\k$, as in \cite[Lemma~2.14]{Magid}. Let $Z = (z_{i,j}) \in \GL_n(E_\cM)$ and 
$A = (\dx Z)(Z)^{-1}$.
We then have that 
\begin{equation} \label{eqn1} \dx Z  = AZ.
\end{equation}
 Let $E$ be the $\Delta$-field generated over $K$ by the entries of $A$.   Then, $E_\cM$ is a PPV extension of $E$ for equation (\ref{eqn1}). Since $Z$ is a matrix of $\Delta$-differential indeterminates, any assignment $Z \mapsto Zg$ for $g \in \GL_n(K)$ defines a $\Delta$-$K$-automorphism $\phi_g$  of $E_\cM$ over $K$.  If we restrict to those $g \in \GL_n=\GL_n(\k)$, then $\phi_g$ leaves $A$ fixed and so  all elements of $E$ are left fixed.  Therefore, $\GL_n$ is a subgroup of the PPV-group of $E_\cM$ over $E$.  Since  this PPV-group is already a subgroup of $\GL_n$, we must have that the PPV-group  of $E_\cM$ over $E$ is $\GL_n$.
\qed\end{proof}
The proof of the following result  uses PPV theory, which does not appear in the statement. It is, therefore, of interest to find a direct proof of it as well.

\begin{proposition}\label{prop:connectedcompconjconst}
Let $G \subset \Gl(V)$ be a linear differential algebraic group over $\k$ and let 
$G^{\circ}$ be the identity
component of $G$. If $G^\circ$ is conjugate to  constants  in $\Gl(V)$, then the same holds for $G$.
\end{proposition}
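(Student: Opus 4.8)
The plan is to reduce the statement to the integrability criterion of Proposition~\ref{prop:compintconjconts} and then to descend the integrating matrix from a finite extension by averaging over the finite group $G/G^\circ$.

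First I would realize $G$ concretely. By Lemma~\ref{lem:inverse} there is a $\Delta$-field $F$ with $F^\p=\k$ and a $\p$-module $\cM$ over $F$, given by a system $\p(Y)=AY$ with $A\in F^{n\times n}$, such that $\Galdelta(\cM)=G$ in the given faithful representation $G\subset\Gl(V)$, so that $\o(\cM)=V$. Let $F_\cM$ be a PPV extension of $F$ for $\cM$; by definition $F_\cM^\p=F^\p=\k$. Since the Kolchin topology is Ritt--Noetherian, $G$ has finitely many irreducible components, so $G^\circ$ is a normal $\delta$-closed subgroup of finite index in $G$. By the parameterized Galois correspondence, its fixed field $\tilde F=(F_\cM)^{G^\circ}$ is a finite normal $\Delta$-extension of $F$ with $\Delta$-automorphism group $\Gamma=\Galdelta(\tilde F/F)\cong G/G^\circ$ satisfying $\tilde F^\Gamma=F$, and $\Galdelta(F_\cM/\tilde F)=G^\circ$. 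The inclusions $F\subset\tilde F\subset F_\cM$ force $\tilde F^\p=\k$.

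Now the hypothesis enters. Since $G^\circ$ is conjugate to constants in $\Gl(V)=\Gl(\o(\cM))$, Proposition~\ref{prop:compintconjconts} applied over $\tilde F$ (where $\Galdelta(\cM)=G^\circ$) shows that $\p(Y)=AY$ is isomonodromic over $\tilde F$: there is $B\in\tilde F^{n\times n}$ with $\p(B)-\delta(A)=AB-BA$. The key step is to descend $B$ to $F$. Each $\gamma\in\Gamma$ fixes $F$, hence fixes $A$, and commutes with $\p$ and $\delta$, so applying $\gamma$ to the relation gives $\p(\gamma(B))-\delta(A)=A\gamma(B)-\gamma(B)A$; that is, every $\gamma(B)$ solves the same inhomogeneous linear equation $L(X):=\p(X)-AX+XA=\delta(A)$. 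Because $L$ is additive and $\k$-linear for constants in $\k=F^\p$ (in particular $1/|\Gamma|\in\Q\subset\k$ commutes with $\p$), the average $\bar B=\tfrac{1}{|\Gamma|}\sum_{\gamma\in\Gamma}\gamma(B)$ again satisfies $L(\bar B)=\delta(A)$; here I use characteristic zero to divide by $|\Gamma|$. Since $\bar B$ is $\Gamma$-invariant, $\bar B\in\tilde F^\Gamma=F$, so $\p(Y)=AY$ is isomonodromic over $F$. Applying Proposition~\ref{prop:compintconjconts} once more, now over $F$ where $\Galdelta(\cM)=G$, yields that $G$ is conjugate to constants in $\Gl(V)$, as desired.

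I expect the main obstacle to be precisely this descent step: a priori the integrating matrix $B$ exists only over the finite extension $\tilde F$, and one must produce one over $F$. The averaging argument handles this cleanly, but it relies on $\tilde F/F$ being a finite normal $\Delta$-extension with $\tilde F^\Gamma=F$, which in turn rests on the finiteness of $G/G^\circ$ (Ritt--Noetherianity) and on the behaviour of the Galois correspondence for the normal subgroup $G^\circ$. As the authors note, it would be of interest to find a direct proof avoiding PPV theory entirely, e.g.\ a purely group- and representation-theoretic descent of the conjugating element; the route above, however, is the most economical.
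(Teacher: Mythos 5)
Your proof is correct and follows essentially the same route as the paper: realize $G$ as a PPV group via Lemma~\ref{lem:inverse}, pass to the fixed field of $G^\circ$, apply Proposition~\ref{prop:compintconjconts} to get an integrating matrix $B$ over the finite extension, and descend by averaging. The only cosmetic difference is that you average over $\Gamma=\Galdelta(\tilde F/F)\cong G/G^\circ$ directly, whereas the paper averages over the classical Galois group of the finite subextension generated by the entries of $B$ (after enlarging it to a Galois closure and invoking the unique $\Delta$-structure on that extension) --- the same trick in slightly different clothing.
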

\begin{proof}
By Lemma~\ref{lem:inverse}, let $K$ be a $\Delta$-field with $K^\p=\k$ such that $G$ is a parameterized differential Galois group of a $\p$-module $\cM$ over $K$ and the embedding $G\subset\GL(V)$ is the faithful representation $G \to \GL(\omega(\cM))$.
Let $L/K$ be a PPV  extension for $\cM$ over $K$. 
One can identify $G$ with $\Galdelta(L/K)$, the group of automorphisms of $L$ over $K$ commuting with $\delta$ and $\p$. 
 Let $F$ be the subfield of $L$ fixed by $G^\circ$. By the PPV correspondence \cite[Theorem 9.5]{cassisinger},
 the group of automorphisms of $L$ over $F$ commuting with $\{\delta,\p\}$  
 coincides with $G^\circ$
 and the extension  $F/K$ is algebraic since $G/G^\circ$ is finite.
 
  Let $\p(Y)=AY$ be a linear differential system
associated to  $\cM$. The parameterized differential Galois group of $\cM$ over $F$ is $G^\circ$ and thus conjugate to constants by assumption. Proposition \ref{prop:compintconjconts}
 implies that $\p(Y)=AY$ is isomonodromic over $F$, that is, there exists 
 $B\in F^{n \times n }$ such that 
 \begin{equation}\label{eq:compint}
 \p(B)-\delta(A)=AB -BA.
 \end{equation}
Let $K_0$ be the subfield extension of $F$ generated over $K$ by the coefficients of the matrix $B$. 
Without loss of generality, we can assume that $K_0/K$ is a finite Galois extension in the classical sense. 
We denote by $\Gal(K_0/K)$ its differential Galois group and by $r$ its degree.
By  \cite[Exercise 1.24]{vdPutSingerDifferential}, there exist unique derivations, still denoted $\p$ and $\delta$ extending  $\p$ and $\delta$
to $K_0$. Moreover, any element of $\Gal(K_0/K)$ commutes with the action of $\delta$ and $\p$ on $K_0$. If we let 
$$C=\frac{1}{r}\sum_{\tau \in \Gal(K_0/K)} \tau(B),$$ 
then $C$ has coefficients in $K$ and satisfies
\begin{align}\label{eq:compint1}
 &\p(A)-\delta(C)=\p(A) - \frac{1}{r} \left(\sum_{\tau \in \Gal(K_0/K)} \tau( \delta(B))\right)\notag\\
 &= \p(A) - \frac{1}{r}\left(\sum_{\tau \in \Gal(K_0/K)} \tau \left( \p(A)-BA + AB\right)\right)= \p(A)-\p(A) +CA -AC.
 \end{align}
 This shows that $\p(Y)=AY$ is  isomonodromic over $K$. By Proposition \ref{prop:compintconjconts}, we find 
 that $G$ is conjugate to constants in $\GL_n$.
 \qed\end{proof}

\section{Calculating the parameterized differential Galois group of $L_1(L_2(y))=0$}\label{sec:main}
In this section, given two completely reducible $\p$-modules $\cL_1$ and  $\cL_2$,  we study the parameterized differential Galois group of an arbitrary $\p$-module extension $\cU$ of $\cL_1$ by $\cL_2$. In Section~\ref{subsec:compunip}, we describe $\Gal^\delta(\cU)$ as a  semi-direct product of a 
$\delta$-closed subgroup of $\Hom(\o(\cL_1),\o(\cL_2))$ by the parameterized differential Galois group $\Gal^\delta(\cL_1 \oplus \cL_2)$ (see Theorem \ref{thm:structuregaloisgroupsemidirectproduct}). In Section~\ref{subsec:red}, we perform a first reduction that allows us to set $\cL_1$ equal to the trivial $\p$-module $\bold{1}$.

In Theorem \ref{thm:redparamradunip}, we show how one can recover a complete description of the parametrized differential Galois group of $\cU$ from the knowledge of the parametrized differential Galois group of its reduction. In Section~\ref{sec:computation},
we thus focus on the computation of the parameterized differential Galois group of an arbitrary $\p$-module extension $\cU$ of $\bold{1}$ by a completely reducible
$\p$-module $\cL$. 

We then show that one can decompose $\cL$ in a ``constant'' and a ``purely non-constant'' part. This decomposition
yields  a decomposition of $R_u(\Gal^\delta(\cU))$. For $K = \k(x)$, the computation of $\Gal^\delta(\cU)$  for the ``constant part'' can be deduced from  the algorithms contained in \cite{MinOvSingunip}, whereas the computation of the ``purely non-constant'' part results from Section~\ref{sec:purelynonconstantcase} and Theorem \ref{thm:purelynonconstantuniprad}. Finally, in  Section~\ref{sec:mergeconstantnonconstant}, we show, under some assumption on $\cL$, that $R_u(\Gal^\delta(\cU))$ is the product of the ``constant'' and ``purely  non-constant'' parts (see Theorem \ref{thm:decomp}).

Throughout this section, $K$ is a $(\delta,\p)$-field of characteristic zero, whose field of $\p$-constants $\k$ is assumed to be $\delta$-closed. 
We denote also by $C$ the field of $\delta$-constants of $\k$. We fix a $\delta$-fiber functor  $\omega :\Diff_K \to \Vect_\k$  on $\Diff_K$ (see Definition \ref{defn:fiberfunctor}).
Any parameterized differential Galois group in this section shall be computed with respect to $\o$ and is a linear differential algebraic group 
defined over $\k$. Any representation is, unless explicitly mentioned, defined over $\k$.

\subsection{Structure of the parameterized differential Galois group } \label{subsec:compunip}

Let $L_1,L_2 \in K[\p]$ be two completely reducible $\p$-operators, and
 let us denote by $\cL_1$ (respectively, by $\cL_2$) the $\p$-module corresponding
to $L_1(y)=0$ (respectively, $L_2(y)=0$).  The $\p$-module $\cU$ over $K$, corresponding 
to $L_1(L_2(y))=0$, is an extension of  $\cL_1$ by $\cL_2$,
$$ \txymatrix{
0 \ar[r] &\cL_2 \ar[r]^{i} & \mathcal{U} \ar[r]^{p} & \cL_1
\ar[r] & 0},$$
in the category of   $\p$-modules over $K$.

\begin{definition}
For any object $ \cX $ in $\{ \cU\}^{\otimes,\delta}$, we  
 define  $\Stab(\cX)$ (respectively, $\Stabdelta(\cX)$) as  the 
set of (respectively, $\delta$-) tensor automorphisms in $\Gal(\cU)$ (respectively, $\Galdelta(\cU)$) that induce the identity on $\omega( \cX )$. 
\end{definition}

By \cite[II.1.36]{demazuregabriel},   $\Stab(\cX)$ (respectively, $\Stabdelta(\cX)$) is a linear (respectively, differential) algebraic group over $\k$. 
One has also that $\Stabdelta(\cX)$ is Zariski dense in $\Stab(\cX)$. Moreover, we have:
\begin{lemma}\label{lemma:stabnormal}
For any object $ \cX $ in $\{ \cU\}^{\otimes,\delta}$, the group  $\Stabdelta( \cX)$ (respectively, $\Stab(\cX)$) is normal in $\Galdelta(\cU)$ (respectively, $\Gal(\cU)$).
\end{lemma}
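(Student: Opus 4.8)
The plan is to recognize $\Stabdelta(\cX)$ as the kernel of a group homomorphism out of $\Galdelta(\cU)$, from which normality is immediate. The relevant homomorphism is the representation attached to $\cX$: by the $\delta$-Tannakian equivalence of Theorem~\ref{thm:difftanequ}, the functor $\o$ identifies the object $\cX$ of $\{\cU\}^{\otimes,\delta}$ with a finite-dimensional representation $\rho_\cX:\Galdelta(\cU)\to\GL(\o(\cX))$, where $\rho_\cX(g)$ is precisely the action of $g$ on $\o(\cX)$ obtained by applying to $g$ the same constructions that produce $\cX$ from $\cU$ (see Definition~\ref{defn:difftannakautogroup}). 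In particular, every element of $\Galdelta(\cU)$ does act on $\o(\cX)$, so $\rho_\cX$ is well defined, and it is a group homomorphism because the constructions are functorial.

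With this in hand, I would simply observe that
$$\Stabdelta(\cX)=\{g\in\Galdelta(\cU)\mid \rho_\cX(g)=\id_{\o(\cX)}\}=\ker\rho_\cX,$$
since by definition $\Stabdelta(\cX)$ consists of exactly those $\delta$-tensor automorphisms inducing the identity on $\o(\cX)$. The kernel of a homomorphism of groups is normal, so $\Stabdelta(\cX)$ is normal in $\Galdelta(\cU)$. The non-parameterized statement is identical: $\o$ restricted to $\{\cU\}^{\otimes}$ gives an equivalence with representations of $\Gal(\cU)$ (by Deligne's theorem), the object $\cX$ corresponds to a representation of $\Gal(\cU)$, and $\Stab(\cX)$ is its kernel, hence normal in $\Gal(\cU)$.

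If one prefers to avoid even naming $\rho_\cX$, the normality can be checked by bare hands: for $g\in\Stabdelta(\cX)$ and $h\in\Galdelta(\cU)$, both $g$ and $h$ stabilize $\o(\cX)$, and on $\o(\cX)$ one has $(hgh^{-1})|_{\o(\cX)}=h|_{\o(\cX)}\circ\id\circ\big(h|_{\o(\cX)}\big)^{-1}=\id$, so $hgh^{-1}\in\Stabdelta(\cX)$. I do not expect a genuine obstacle here; the only point that merits care is the well-definedness of the action of $\Galdelta(\cU)$ on $\o(\cX)$, which is guaranteed because $\cX$ lies in $\{\cU\}^{\otimes,\delta}$ and the Galois group is defined so as to stabilize $\o(\cV)$ for every object $\cV$ of this category. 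Everything else is formal.
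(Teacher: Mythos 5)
Your proof is correct and takes essentially the same approach as the paper: the paper's entire argument is the one-line observation that, by definition, every element of $\Galdelta(\cU)$ stabilizes $\o(\cX)$, so for $h\in\Stabdelta(\cX)$ and $g\in\Galdelta(\cU)$ the conjugate $ghg^{-1}$ still induces the identity on $\o(\cX)$ --- exactly your ``bare hands'' check. Your kernel-of-$\rho_\cX$ formulation is the same argument phrased functorially rather than a genuinely different route.
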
 
 \begin{proof}
 We prove only the parameterized statement. Let $g \in \Galdelta(\cU)$ and $h \in \Stabdelta(\cX)$. 
 One has to show that $ghg^{-1}$ induces the identity on $\o(\cX)$. It  is sufficient to remark that, by definition, any element of  $\Galdelta(\cU)$ stabilizes $\o(\cX)$.
 \qed\end{proof}

The aim of this section is to  prove the following theorem.
 \begin{theorem}\label{thm:structuregaloisgroupsemidirectproduct} 
If $\cL_1,\cL_2$ are completely reducible $\p$-modules over $K$ and if  $\cU$ is a $\p$-module extension over $K$ of 
 $\cL_1$ by $\cL_2$, then  
 \begin{enumerate}
 \item $\Gal^\delta(\cU)$ is an extension of $\Gal^\delta(\cL_1\oplus \cL_2)$ by a $\delta$-subgroup $W\subset\Hom(\o(\cL_1),\o(\cL_2))$.
 \item $W$ is stable under the action of $\Gal^\delta(\cL_1\oplus \cL_2)$ on $\Hom(\o(\cL_1),\o(\cL_2))$ given by 
 $$g*\phi=g\phi(g^{-1})\quad \text{for any}\ \ (g,\phi)  \in  \Gal^\delta(\cL_1\oplus \cL_2) \times \Hom(\o(\cL_1),\o(\cL_2)).$$ 
 \end{enumerate}
  \end{theorem}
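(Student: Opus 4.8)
The plan is to work inside the $\delta$-Tannakian category $\{\cU\}^{\otimes,\delta}$ and to use the equivalence of Theorem~\ref{thm:difftanequ} between $\delta$-objects and representations of $\Galdelta(\cU)$. The key structural observation is that since $\cU$ is an extension of $\cL_1$ by $\cL_2$, there is a natural surjection of $\delta$-tensor categories onto $\{\cL_1\oplus\cL_2\}^{\otimes,\delta}$, and correspondingly a surjective homomorphism $\pi:\Galdelta(\cU)\to\Galdelta(\cL_1\oplus\cL_2)$ of linear differential algebraic groups. Concretely, $\cL_1\oplus\cL_2$ is the associated graded of $\cU$ with respect to its two-step filtration, so every object built from $\cL_1\oplus\cL_2$ by linear algebra and prolongation sits in $\{\cU\}^{\otimes,\delta}$, giving the fully faithful embedding that induces $\pi$. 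I would identify $W:=\ker\pi$ with $\Stabdelta(\cL_1\oplus\cL_2)$, the $\delta$-subgroup of $\Galdelta(\cU)$ acting trivially on $\omega(\cL_1\oplus\cL_2)$; by the discussion preceding Lemma~\ref{lemma:stabnormal} this is a linear differential algebraic group, and by Lemma~\ref{lemma:stabnormal} it is normal, so part~(1) of the theorem reduces to computing $W$ and showing it embeds into $\Hom(\o(\cL_1),\o(\cL_2))$.

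To pin down $W$, I would fix a $\k$-splitting $\omega(\cU)=\omega(\cL_1)\oplus\omega(\cL_2)$ of the exact sequence $0\to\omega(\cL_2)\to\omega(\cU)\to\omega(\cL_1)\to0$ (the fiber functor $\omega$ is exact). Relative to this splitting every $g\in\Galdelta(\cU)$ acts on $\omega(\cU)$ as a block matrix
$$
g=\begin{pmatrix} g_1 & \phi \\ 0 & g_2\end{pmatrix},
$$
where $g_i$ is the induced action on $\omega(\cL_i)$ and $\phi\in\Hom(\o(\cL_1),\o(\cL_2))$; the vanishing lower-left block reflects that $\omega(\cL_2)$ is a subobject, hence $\Galdelta(\cU)$-stable. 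An element lies in $W$ exactly when $g_1=\id$ and $g_2=\id$, i.e. when it has the form $\begin{pmatrix}\id&\phi\\0&\id\end{pmatrix}$. The assignment $g\mapsto\phi$ is then the desired injective homomorphism $W\hookrightarrow\Hom(\o(\cL_1),\o(\cL_2))$, and it is a morphism of linear differential algebraic groups because the unipotent block of a product of such matrices adds. This proves~(1).

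For part~(2), I would compute, for $g\in\Galdelta(\cU)$ projecting to $(g_1,g_2)\in\Galdelta(\cL_1\oplus\cL_2)$ and $w\in W$ with unipotent block $\phi$, the conjugate $gwg^{-1}$. A direct block-matrix multiplication gives that $gwg^{-1}$ again lies in $W$ (consistent with normality) and that its unipotent block equals $g_2\,\phi\,g_1^{-1}$. Under the identification of $\Galdelta(\cL_1\oplus\cL_2)$ acting on $\Hom(\o(\cL_1),\o(\cL_2))$ by $g*\phi=g\phi(g^{-1})$ — where $g$ acts on the target $\o(\cL_2)$ via $g_2$ and on the source $\o(\cL_1)$ via $g_1$ — this is precisely the stated action, so $W$ is stable under it. The main obstacle, and the point requiring care rather than routine calculation, is establishing cleanly that $W$ embeds into $\Hom(\o(\cL_1),\o(\cL_2))$ as opposed to all of $\End(\omega(\cU))$: one must verify that an element of $W$ necessarily kills both $\omega(\cL_2)$ and the induced action on $\omega(\cL_1)$, which follows from $W$ acting trivially on $\omega(\cL_1\oplus\cL_2)$ together with the block-triangular shape forced by the $\Galdelta(\cU)$-stability of the subobject $\omega(\cL_2)$. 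I would make this rigorous by checking that $w-\id$ maps $\omega(\cU)$ into $\omega(\cL_2)$ and factors through $\omega(\cL_1)$, which identifies $w-\id$ with an element of $\Hom(\o(\cL_1),\o(\cL_2))$.
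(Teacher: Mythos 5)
Your proof is correct and takes essentially the same route as the paper: your block-matrix decomposition relative to a chosen $\k$-linear splitting is exactly the coordinate form of the paper's cocycle $\zeta_\cU(g)(x)=g\big(s\big(g^{-1}x\big)\big)-s(x)$ from Lemma~\ref{lemma:unipcocycle}, with $W=\ker\pi=\Stabdelta(\cL_1\oplus\cL_2)$ arising from the full subcategory $\{\cL_1\oplus\cL_2\}^{\otimes,\delta}\subset\{\cU\}^{\otimes,\delta}$ as in Lemma~\ref{lemma:stabunip}, and your conjugation computation matching Lemma~\ref{lemma:compact}. The only blemish is a harmless labeling slip in your displayed block matrix: since $\phi\in\Hom(\o(\cL_1),\o(\cL_2))$ occupies the off-diagonal block, the diagonal block in the same row as $\phi$'s target must be the action on the subobject $\o(\cL_2)$, so the matrix should read $\begin{pmatrix} g_2 & \phi\\ 0 & g_1\end{pmatrix}$ --- consistent with your (correct) formula $g_2\,\phi\,g_1^{-1}$ for the unipotent block of $gwg^{-1}$.
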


 \begin{remark}
 The parameterized differential Galois group $\Gal^\delta(\cL_1\oplus \cL_2)$ acts on the objects of the $\delta$-tensor category generated by $\o(\cL_1 \oplus \cL_2)$.
 The $\k$-vector space $\Hom(\o(\cL_1),\o(\cL_2))$ belongs to this category, and the action of $\Gal^\delta(\cL_1\oplus \cL_2)$ on $\Hom(\o(\cL_1),\o(\cL_2))$  detailed above is just the description of the  Tannakian representation. 
 \end{remark}
 Before proving this theorem, we need some intermediate lemmas.

\begin{lemma}\label{lemma:stabunip}
The linear differential algebraic group $\Galdelta(\cU)$ is an extension of 
the reductive linear differential algebraic group $\Galdelta(\cL_1 \oplus \cL_2)$
by the  linear differential algebraic group $\Stabdelta( \cL_1 \oplus \cL_2 )$.
\end{lemma}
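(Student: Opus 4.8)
The plan is to realize $\Galdelta(\cL_1 \oplus \cL_2)$ as a Tannakian quotient of $\Galdelta(\cU)$ and identify its kernel with $\Stabdelta(\cL_1 \oplus \cL_2)$. First I would observe that $\cL_1 \oplus \cL_2$ is an object of the $\delta$-tensor category $\{\cU\}^{\otimes,\delta}$: indeed, from the exact sequence $0 \to \cL_2 \to \cU \to \cL_1 \to 0$, the submodule $\cL_2$ and the quotient $\cL_1$ are both subquotients of $\cU$, so their direct sum lies in the category generated by $\cU$. Consequently $\{\cL_1 \oplus \cL_2\}^{\otimes,\delta}$ is a $\delta$-tensor subcategory of $\{\cU\}^{\otimes,\delta}$. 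By the Tannakian equivalence of Theorem~\ref{thm:difftanequ}, this inclusion of categories corresponds to a surjective morphism of parameterized differential Galois groups
$$
\pi : \Galdelta(\cU) \twoheadrightarrow \Galdelta(\cL_1 \oplus \cL_2).
$$

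Next I would identify the kernel of $\pi$ with $\Stabdelta(\cL_1 \oplus \cL_2)$. By definition, $\Stabdelta(\cL_1 \oplus \cL_2)$ consists of those $g \in \Galdelta(\cU)$ inducing the identity on $\omega(\cL_1 \oplus \cL_2)$. Under the general Tannakian formalism (see \cite{demazuregabriel}), the kernel of the restriction map $\pi$ to a full $\delta$-tensor subcategory generated by an object $\cX$ is precisely the stabilizer $\Stabdelta(\cX)$: an element acts trivially on every object built from $\cX$ by the $\delta$-tensor operations if and only if it acts trivially on $\omega(\cX)$ itself, since the action on all derived objects is functorially determined by the action on $\omega(\cX)$. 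Applying this to $\cX = \cL_1 \oplus \cL_2$ gives $\ker(\pi) = \Stabdelta(\cL_1 \oplus \cL_2)$, so we obtain the short exact sequence
$$
1 \to \Stabdelta(\cL_1 \oplus \cL_2) \to \Galdelta(\cU) \xrightarrow{\pi} \Galdelta(\cL_1 \oplus \cL_2) \to 1.
$$

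Finally I would record that $\Galdelta(\cL_1 \oplus \cL_2)$ is reductive. Since $\cL_1$ and $\cL_2$ are completely reducible by hypothesis, so is their direct sum $\cL_1 \oplus \cL_2$, and hence every object of $\{\cL_1 \oplus \cL_2\}^{\otimes}$ is completely reducible (as noted after the definition of complete reducibility). By the cited characterization via \cite{vdPutSingerDifferential}, the ordinary differential Galois group $\Gal(\cL_1 \oplus \cL_2)$ is a reductive linear algebraic group. By Proposition~\ref{prop:zariskidensegalois groups}, $\Galdelta(\cL_1 \oplus \cL_2)$ is Zariski dense in $\Gal(\cL_1 \oplus \cL_2)$, so its Zariski closure is reductive; by the proposition citing \cite[Remark~2.9]{MinOvSing}, $\Galdelta(\cL_1 \oplus \cL_2)$ is then a reductive linear differential algebraic group. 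This completes the statement.

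I expect the step most in need of care to be the precise identification $\ker(\pi) = \Stabdelta(\cL_1 \oplus \cL_2)$: one must verify that the Tannakian subcategory generated by $\cL_1 \oplus \cL_2$ coincides with the category whose objects are fixed pointwise by the stabilizer, which relies on the fact that the $\delta$-tensor operations (including the prolongation functor $D$) applied to $\cL_1 \oplus \cL_2$ generate exactly the objects on which $\Stabdelta(\cL_1 \oplus \cL_2)$ acts trivially. The inclusion that an element of the kernel stabilizes $\omega(\cL_1 \oplus \cL_2)$ is immediate; the reverse inclusion, that stabilizing $\omega(\cL_1 \oplus \cL_2)$ forces trivial action on all derived objects, is where the functoriality of the $\delta$-tensor construction genuinely enters.
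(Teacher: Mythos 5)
Your proposal is correct and follows essentially the same route as the paper's proof: the inclusion of $\delta$-tensor categories $\{\cL_1\oplus\cL_2\}^{\otimes,\delta}\subset\{\cU\}^{\otimes,\delta}$ yields the quotient map $\pi$ with $\ker\pi=\Stabdelta(\cL_1\oplus\cL_2)$, and reductivity follows from complete reducibility of $\cL_1\oplus\cL_2$ together with Zariski density and \cite[Remark 2.9]{MinOvSing}. Your elaboration of the kernel identification (which the paper asserts without proof) via the functorial determination of the action on derived objects is exactly the intended justification, given the paper's definition of $\Galdelta(\cU)$.
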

\begin{proof}
Since  $\{ \cL_1 \oplus \cL_2   \}^{\otimes,\delta}$ is a full $\delta$-tensor subcategory of $\{\cU\}^{\otimes,\delta}$,
the linear differential algebraic group  $\Galdelta( \cL_1 \oplus \cL_2   )$   is a quotient of  $\Galdelta(\cU)$. We denote 
the quotient map by $$\pi : \Galdelta(\cU) \rightarrow \Galdelta(\cL_1 \oplus \cL_2 ).$$ Then  $\ker\pi=\Stabdelta( \cL_1 \oplus \cL_2 )$. Since $\cL_1$ and $\cL_2$ are completely reducible, 
  $ \cL_1 \oplus \cL_2 $ is completely reducible as well. This means that  $\Galdelta(\cL_1 \oplus \cL_2)$
is reductive. Since the latter group is the Zariski closure of $\Galdelta( \cL_1 \oplus \cL_2 )$ in $\Gl(\omega( \cL_1 \oplus \cL_2 ))$,
 \cite[Remark 2.9]{MinOvSing} implies that 
$\Galdelta(\cL_1 \oplus \cL_2)$ is a reductive linear differential algebraic group.
\qed\end{proof}

We will relate $\Stabdelta( \cL_1 \oplus \cL_2 )$ to  
 $R_u(\Galdelta(\cU))$ 
and  describe more precisely the structure of the latter group.
By the exactness of $\omega$,
$\omega(\cU)$ is  an extension of $\omega(\cL_1)$
by $\omega(\cL_2)$ in  the 
category of   representations of $\Galdelta(\cU)$.

\begin{lemma}\label{lemma:unipcocycle}
In the above notation, let 
$s$ be a $\k$-linear section of the exact sequence:
\begin{equation}\label{eqn:exactseq}
\txymatrix{
0 \ar[r] &\omega(\cL_2) \ar[r]^{\omega(i)} &
\omega(\mathcal{U}) \ar[r]^{\omega(p)} & \omega(\cL_1) \ar@{.>}@/^/[l]^{s}
\ar[r] & 0}.
 \end{equation}
We consider the following  map
 $$\zeta_\cU: \Galdelta(\cU) \rightarrow \Hom(\omega(\cL_1),\omega(\cL_2)),\quad g \mapsto \left( x \mapsto  g(s(g^{-1} x))-s(x) \right).$$ 
 Then the restriction of the map
$\zeta_\cU$ to $\Stabdelta(  \cL_1 \oplus \cL_2 )$ is a one-to-one morphism of linear differential algebraic groups. 
Moreover, the linear differential algebraic group $\Stabdelta(  \cL_1 \oplus \cL_2 )$ is abelian and coincides with  $R_u( \Galdelta(\cU))$.
\end{lemma}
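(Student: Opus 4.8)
The plan is to verify the three assertions of Lemma~\ref{lemma:unipcocycle} in turn: that $\zeta_\cU$ restricted to $\Stabdelta(\cL_1\oplus\cL_2)$ is an injective homomorphism of differential algebraic groups, that this group is abelian, and that it coincides with $R_u(\Galdelta(\cU))$. First I would analyze $\zeta_\cU$ on the whole group. Writing any $g\in\Galdelta(\cU)$ in block form with respect to the splitting $\omega(\cU)=\omega(\cL_2)\oplus s(\omega(\cL_1))$, the condition that $g$ stabilizes $\omega(\cL_2)$ and acts as $\pi(g)$ on the quotient forces a block-upper-triangular shape $g=\left(\begin{smallmatrix} a & c\\ 0 & b\end{smallmatrix}\right)$ with $a$ acting on $\omega(\cL_2)$, $b$ on $\omega(\cL_1)$, and $c\in\Hom(\omega(\cL_1),\omega(\cL_2))$; the map $\zeta_\cU(g)$ is precisely (a twist of) this off-diagonal block $c$. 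On $\Stabdelta(\cL_1\oplus\cL_2)$ we have $a=\id$ and $b=\id$, so such elements are exactly the unipotent block matrices $\left(\begin{smallmatrix} \id & c\\ 0 & \id\end{smallmatrix}\right)$. A direct computation of the product of two such matrices shows $\zeta_\cU$ restricts to a group homomorphism into $\bigl(\Hom(\omega(\cL_1),\omega(\cL_2)),+\bigr)$, and since the target is abelian, commutativity of $\Stabdelta(\cL_1\oplus\cL_2)$ follows immediately; injectivity holds because $c=0$ forces $g=\id$. That $\zeta_\cU$ is a $\delta$-polynomial map follows from the fact that the Tannakian action on $\omega(\cU)$ and on the subobject $\Hom(\omega(\cL_1),\omega(\cL_2))$ is given by $\delta$-polynomial maps, so this part is essentially bookkeeping.

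The substantive part is identifying $\Stabdelta(\cL_1\oplus\cL_2)$ with $R_u(\Galdelta(\cU))$. For the inclusion $\Stabdelta(\cL_1\oplus\cL_2)\subset R_u(\Galdelta(\cU))$, I would note that every element of this group is unipotent (being block-unipotent as above), and by Lemma~\ref{lemma:stabnormal} the group is normal in $\Galdelta(\cU)$; since it is moreover abelian, hence unipotent as a differential algebraic group, it is a normal unipotent $\delta$-subgroup and therefore contained in the unipotent radical by the defining maximality property of $R_u$. For the reverse inclusion, I would use Lemma~\ref{lemma:stabunip}: $\Galdelta(\cU)$ is an extension of the \emph{reductive} group $\Galdelta(\cL_1\oplus\cL_2)$ by $\Stabdelta(\cL_1\oplus\cL_2)$, so passing to the quotient $\pi$ maps $R_u(\Galdelta(\cU))$ to a normal unipotent $\delta$-subgroup of the reductive group $\Galdelta(\cL_1\oplus\cL_2)$, which must be trivial by the definition of reductivity ($R_u=\{e\}$). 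Hence $R_u(\Galdelta(\cU))\subset\ker\pi=\Stabdelta(\cL_1\oplus\cL_2)$.

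The step I expect to require the most care is the claim that $\pi\bigl(R_u(\Galdelta(\cU))\bigr)$ is a normal unipotent $\delta$-subgroup \emph{defined over $\k$} of $\Galdelta(\cL_1\oplus\cL_2)$, so that reductivity genuinely forces it to vanish. The image of a unipotent group under a $\delta$-polynomial homomorphism is unipotent (images of unipotents are unipotent, e.g. via condition~(2) of the unipotence definition — no elements of finite order $>1$), and the image of a normal subgroup under a surjection is normal; one must check these images remain $\delta$-closed and defined over $\k$, which follows from the image of a $\delta$-polynomial homomorphism being Kolchin closed (cited as \cite[Proposition~7]{cassdiffgr} in the excerpt). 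Once this is in place, the maximality characterization of the unipotent radical closes both inclusions, and the identification $\Stabdelta(\cL_1\oplus\cL_2)=R_u(\Galdelta(\cU))$ is complete.
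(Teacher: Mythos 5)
Your proposal is correct and follows essentially the same route as the paper: your block-matrix computation with respect to the splitting $\omega(\cU)=\omega(\cL_2)\oplus s(\omega(\cL_1))$ is just the paper's cocycle identity $\zeta_\cU(g_1g_2)(x)=g_1\zeta_\cU(g_2)\big(g_1^{-1}x\big)+\zeta_\cU(g_1)(x)$ written in coordinates, and your two inclusions (block-unipotence plus normality via Lemma~\ref{lemma:stabnormal} for $\Stabdelta(\cL_1\oplus\cL_2)\subset R_u(\Galdelta(\cU))$, and Cassidy's theorem on images of unipotent groups together with the reductivity of $\Galdelta(\cL_1\oplus\cL_2)$ from Lemma~\ref{lemma:stabunip} for the reverse) match the paper's argument step for step. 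One cosmetic caveat: abelianness does not by itself imply unipotence, so the phrase ``since it is moreover abelian, hence unipotent'' should lean only on the block-unipotent form (equivalently, $(g-e)^2=0$, as in the paper), which you in fact have.
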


\begin{proof}

For all   $g_1, \ g_2 \ \in  \Galdelta(\mathcal{U})$, we have:
\beq \label{eqn:co}
 \zeta_\cU(g_1g_2)(x)=g_1\zeta_ \cU(g_2)(g_1^{-1} x)+\zeta_\cU(g_1)(x).  \eeq
If $g_1,g_2\in \Stabdelta(\cL_1 \oplus\cL_2)$, equation~\eqref{eqn:co} gives
$$\zeta_\cU(g_1g_2)=\zeta_\cU(g_1) +\zeta_\cU
(g_2).$$ This means that $\zeta_\cU$ is a morphism of linear differential algebraic groups from $\Stabdelta(\cL_1 \oplus\cL_2)$ to $\Hom(\omega(\cL_1),\omega(\cL_2))$.

Moreover, let   $\{e_j\}_{j = 1 \dots s}$ (respectively, $\{f_i\}_{i = 1\dots r}$)  be a $\k$-basis of 
$\omega(\cL_2)$ (respectively, $\omega(\cL_1)$). Then  $$\left\{\omega(i)(e_i), s(f_j)\right\}_{ i=1, \dots, s, j=1, \dots r}$$ is a $\k$-basis
of   $\omega(\mathcal{U})$. If  $g \ \in \Stabdelta(\cL_1 \oplus\cL_2 )\cap\ker(\zeta_\cU)$, then 
$g$ induces the identity on $$\left\{\omega(i)(e_i), s(f_j)\right\}_{ i=1, \dots, s, j=1, \dots r}$$ and 
thereby on $\omega(\cU)$.
Therefore, by definition of $ \Galdelta(\cU)$, the element $g$ is the identity element and, therefore, $\ker \left(\zeta_\cU\big|_{ \Stabdelta(\cL_1 \oplus\cL_2)} \right)$ is trivial. 

Since $\Hom(\omega(\cL_1),\omega(\cL_2))$ is abelian, the same holds for  $\Stabdelta(  \cL_1 \oplus \cL_2 )$. Moreover,
 $\Stabdelta(  \cL_1 \oplus \cL_2 )$ is unipotent. Indeed, let  $e$  be the  identity  element in $\Galdelta(\cU)$,  $x\in\o(\cL_1)$, and $g \in  \Stabdelta(  \cL_1 \oplus \cL_2 )$. 
  Since $gs(x)-s(x) \in \o(\cL_2)$, we have $$(g-e)^2(s(x))= (g-e)(gs(x)-s(x))= g(gs(x)-s(x))-(gs(x)-s(x))=0.$$ Reasoning as above, we find 
 that 
$(g-e)^2$ is zero on $\o(\cU)$. By Lemma \ref{lemma:stabnormal}, $\Stabdelta( \cL_1 \oplus \cL_2 )$ is also normal and, hence, must be contained
in $R_u( \Galdelta(\cU))$. By \cite[Theorem 1]{Cassunip}, the image of a unipotent linear differential algebraic group is unipotent. By Lemma \ref{lemma:stabunip},  
$\Stabdelta( \cL_1 \oplus \cL_2 )$ is the kernel of the projection of $ \Galdelta(\cU)$
on the reductive linear differential algebraic group $ \Galdelta( \cL_1 \oplus \cL_2 )$.  It follows that
$R_u( \Galdelta(\cU))$ is contained in   $\Stabdelta(  \cL_1 \oplus \cL_2 )$, which ends the proof.
\qed\end{proof}

\begin{remark}\label{rem:indep}
Since two sections of \eqref{eqn:exactseq} differ by a map from 
$\o(\cL_1)$ to $\o(\cL_2)$, one  sees that, when restricted to $R_u(\Galdelta(\cU))=\Stabdelta(  \cL_1 \oplus \cL_2 )$,
the map $\zeta_\cU$ is independent of the choice of the section.
\end{remark}

By the above lemma,  $R_u( \Galdelta(\cU))$
is an abelian normal subgroup of $ \Galdelta(\cU)$. Since $ \Galdelta(\cL_1 \oplus \cL_2)$
is the quotient of $ \Galdelta(\cU)$ by $R_u( \Galdelta(\cU))$ and $R_u( \Galdelta(\cU))$ is abelian, the linear differential algebraic group
 $ \Galdelta(\cL_1 \oplus \cL_2)$ acts by conjugation on $R_u( \Galdelta(\cU))$.
The lemma below shows that this action is compatible with
the action of $\Galdelta(\cL_1 \oplus \cL_2)$ on $\Hom_\k(\omega(\cL_1),\omega(\cL_2))$.

\begin{lemma}\label{lemma:compact}
For all $g_1  \ \in \Galdelta(\mathcal{U})$, $g_2 \ \in R_u(\Galdelta(\cU))$, and $x \in \omega(\cL_1)$, we have $$\zeta_\cU\big( g_1
g_2 {g_1}^{-1}\big)(x)=g_1\big(\zeta_\cU(g_2)\big(g_1^{-1}x\big)\big)=g_1*\zeta_\cU(g_2)(x),$$
where $*$  denotes the natural action  of $\Gal^\delta(\cL_1\oplus \cL_2)$ on $\Hom(\o(\cL_1),\o(\cL_2))$ via  $$g *\phi=  g \circ  \phi \circ g^{-1}\quad \text{for}\ \ \phi \in \Hom(\o(\cL_1),\o(\cL_2))\ \ \text{and}\ \ g \in \Gal^\delta(\cL_1\oplus \cL_2).$$
\end{lemma}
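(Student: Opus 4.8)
The plan is to derive both equalities directly, reducing everything to the single fact, established in Lemma~\ref{lemma:unipcocycle}, that $g_2 \in R_u(\Galdelta(\cU)) = \Stabdelta(\cL_1 \oplus \cL_2)$ acts as the identity on both $\omega(\cL_1)$ and $\omega(\cL_2)$. The second equality $g_1\big(\zeta_\cU(g_2)(g_1^{-1}x)\big) = g_1 * \zeta_\cU(g_2)(x)$ is merely the definition of the action $*$, once one observes that $g_1$ acts on $\omega(\cL_1)$ and $\omega(\cL_2)$ through its image $\pi(g_1) \in \Galdelta(\cL_1 \oplus \cL_2)$ and that this agrees with the action of $g_1$ on the sub- and quotient objects of $\omega(\cU)$ because $\omega(i)$ and $\omega(p)$ are $\Galdelta(\cU)$-equivariant. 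Hence all of the content lies in the first equality.

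For the first equality I would start from the definition $\zeta_\cU(h)(x) = h\big(s(h^{-1}x)\big) - s(x)$ with $h = g_1 g_2 g_1^{-1}$. Since $g_2$ fixes $\omega(\cL_1)$ pointwise, one has $h^{-1}x = g_1 g_2^{-1} g_1^{-1} x = x$, so the formula collapses to $\zeta_\cU(g_1 g_2 g_1^{-1})(x) = (g_1 g_2 g_1^{-1})(s(x)) - s(x)$, and it remains to evaluate the middle term.

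To do so, I would set $u = g_1^{-1} s(x) \in \omega(\cU)$ and $z = g_1^{-1}x = \omega(p)(u) \in \omega(\cL_1)$, so that $u - s(z) \in \omega(\cL_2)$ because $\omega(p)(u - s(z)) = z - z = 0$. Using that $g_2$ is the identity on $\omega(\cL_2)$ and on $\omega(\cL_1)$ (the latter giving $\zeta_\cU(g_2)(z) = g_2(s(z)) - s(z)$ from the definition of $\zeta_\cU$), one computes
$$ g_2(u) = g_2(s(z)) + g_2(u - s(z)) = \big(s(z) + \zeta_\cU(g_2)(z)\big) + (u - s(z)) = u + \zeta_\cU(g_2)(g_1^{-1}x). $$
Applying $g_1$ and using $g_1(u) = s(x)$ then yields $(g_1 g_2 g_1^{-1})(s(x)) - s(x) = g_1\big(\zeta_\cU(g_2)(g_1^{-1}x)\big)$, which is exactly the first equality.

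The computation is short, so the only real obstacle is bookkeeping: one must consistently distinguish $g_1$ acting on $\omega(\cU)$ from $g_1$ acting, via $\pi(g_1)$, on $\omega(\cL_1)$ and $\omega(\cL_2)$, and check that $\zeta_\cU(g_2)(g_1^{-1}x)$, which lives in $\omega(i)(\omega(\cL_2)) \subset \omega(\cU)$, is moved by $g_1$ in the same way whether regarded inside $\omega(\cU)$ or inside $\omega(\cL_2)$. This compatibility is precisely the equivariance of $\omega(i)$, so no genuine difficulty arises. Alternatively, one could reach the same conclusion purely formally by applying the cocycle identity~\eqref{eqn:co} to the three factors of $g_1 g_2 g_1^{-1}$ and simplifying with $\zeta_\cU(g_1^{-1})$; this avoids the section $s$ but relies on the same triviality facts about $g_2$.
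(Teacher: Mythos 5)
Your proof is correct, and it reaches the identity by a genuinely different route from the paper's. The paper argues purely formally with the cocycle identity \eqref{eqn:co}: it first specializes it to obtain \eqref{eqn:co2}, namely $g_1\zeta_\cU\big(g_1^{-1}\big)\big(g_1^{-1}x\big)=-\zeta_\cU(g_1)(x)$, then expands $\zeta_\cU\big(g_1g_2g_1^{-1}\big)$ by applying \eqref{eqn:co} twice, and finally cancels the unwanted terms using that $g_1g_2g_1^{-1}$ and $g_1g_2^{-1}g_1^{-1}$ lie in $R_u(\Galdelta(\cU))=\Stabdelta(\cL_1\oplus\cL_2)$ (normality, Lemma~\ref{lemma:stabnormal}) and hence act trivially on $\omega(\cL_1)$ and $\omega(\cL_2)$. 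You instead unwind the definition of $\zeta_\cU$ through the section $s$: the same normality observation gives $h^{-1}x=x$ for $h=g_1g_2g_1^{-1}$, collapsing $\zeta_\cU(h)(x)$ to $h(s(x))-s(x)$, and the decomposition $u=s(z)+(u-s(z))$ with $u-s(z)\in\omega(\cL_2)$ (via $\omega(p)$-equivariance) lets you evaluate $g_2(u)=u+\zeta_\cU(g_2)\big(g_1^{-1}x\big)$ termwise from the triviality of $g_2$ on $\omega(\cL_1)$ and $\omega(\cL_2)$, as guaranteed by Lemma~\ref{lemma:unipcocycle}. Both arguments rest on exactly the same two inputs --- the identification $R_u(\Galdelta(\cU))=\Stabdelta(\cL_1\oplus\cL_2)$ and the normality of the stabilizer --- so the difference is one of mechanics: your computation is more elementary and makes visible \emph{why} conjugation becomes the $*$-action (it is conjugation of the affine part of $g_2$'s action on the complement $s(\omega(\cL_1))$), while the paper's manipulation stays at the level of the abstract cocycle relation and would apply verbatim to any map satisfying \eqref{eqn:co}. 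Note that the ``alternative'' you sketch at the end --- applying \eqref{eqn:co} to the three factors and simplifying with $\zeta_\cU\big(g_1^{-1}\big)$ --- is in fact precisely the paper's proof. Your bookkeeping caveats (that $g_1$ acts on $\omega(\cL_1)$ and $\omega(\cL_2)$ through its image in $\Galdelta(\cL_1\oplus\cL_2)$, compatibly with $\omega(i)$ and $\omega(p)$ by equivariance, and that the second equality is then definitional) are the right ones and are used implicitly by the paper as well.
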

\begin{proof}
Let $e$ denote the identity element in $\Galdelta(\cU)$.
From \eqref{eqn:co}, we find that, for all $x \in \omega(\cL_1)$,  
 \beq\label{eqn:co2} g_1
\zeta_\cU\big({g_1}^{-1}\big)\big(g_1^{-1} x\big)= \zeta_\cU(e)(x) -\zeta_\cU(g_1)(x)=
-\zeta_\cU(g_1)(x).\eeq
Applying repeatedly \eqref{eqn:co}, we deduce that
\begin{align*}
\zeta_\cU&\left( g_1 g_2
{g_1}^{-1}\right)(x)= g_1\left(\zeta_\cU\big(g_2{g_1}^{-1}\big)\big(g_1^{-1} x\big)\right) +
\zeta_\cU(g_1)(x) \\
 &= g_1\left( g_2\zeta_\cU\big({g_1}^{-1}\big)\big(g_2^{-1}g_1^{-1}x\big)+\zeta_\cU(g_2)\big(g_1^{-1}x\big) \right)+\zeta_\cU(g_1)(x)\\
 &= g_1 \zeta_\cU(g_2)\big(g_1^{-1}x\big) + g_1g_2g_1^{-1}\left ( g_1 \zeta_\cU \big(g_1^{-1}\big)\big(g_1^{-1} g_1g_2^{-1}g_1^{-1}x\big)\right) + \zeta_\cU(g_1)(x), \end{align*}
for all $x \in \omega(\cL_1)$. 
Since  $$g_1g_2 {g_1}^{-1},\ g_1
g_2^{-1} {g_1}^{-1}\in R_u(\Galdelta(\cU))=\Stabdelta(\cL_1 \oplus \cL_2),$$ we get that, for all $x \in \omega(\cL_1)$, 
\begin{align*}
g_1g_2g_1^{-1}&\left( g_1 \zeta_\cU \big(g_1^{-1}\big)\big(g_1^{-1} g_1g_2^{-1}g_1^{-1}x\big)\right) + \zeta_\cU(g_1)(x)\\
& = g_1 \zeta_\cU \big(g_1^{-1}\big)(g_1^{-1}x) + \zeta_\cU(g_1)(x) =0.
\end{align*}
 We conclude that, for all $x \in \omega(\cL_1)$,
$$\zeta_\cU\big( g_1 g_2 {g_1}^{-1}\big)(x)=g_1\zeta_\cU(g_2)\big(g_1^{-1} x\big).\eqno \qed$$
\end{proof}

\begin{proof}[Proof of Theorem \ref{thm:structuregaloisgroupsemidirectproduct}]
By the above, $\Gal^\delta(\cU)$ is an extension of $\Gal^\delta(\cL_1\oplus \cL_2)$ by  $R_u(\Gal^\delta(\cU))$.  
The action of $\Gal^\delta(\cL_1\oplus \cL_2)$ on $R_u(\Gal^\delta(\cU))$ is deduced from the action by conjugation of $\Gal^\delta(\cU)$
on its unipotent radical.

Combining Lemma \ref{lemma:unipcocycle} and Lemma \ref{lemma:compact}, we can identify via $\zeta_\cU$, the unipotent radical $R_u(\Gal^\delta(\cU))$
with a $\delta$-closed subgroup of $\Hom(\o(\cL_1),\o(\cL_2))$ and the action of $\Gal^\delta(\cL_1\oplus \cL_2)$  on $R_u(\Gal^\delta(\cU))$ by conjugation with the action of  $\Gal^\delta(\cL_1\oplus \cL_2)$ on $\Hom(\o(\cL_1),\o(\cL_2))$, induced by the $\Gal^\delta(\cL_1\oplus \cL_2)$-module structure on $\o(\cL_1 \oplus \cL_2)$.
\qed\end{proof}

\begin{remark}
The extension in Theorem~\ref{thm:structuregaloisgroupsemidirectproduct} does not split in general. For example,  $$G=\left\{\begin{pmatrix}a & 0 & 0\\ 0 & 1 & b\\ 0 & 0 & 1\end{pmatrix}\in\GL_3(\k)\:\Bigg| \: \delta(b)=\frac{\delta(a)}{a}\right\}$$ is a linear differential algebraic group such that the quotient map $G\to G/R_u(G)\cong\k^\times$ does not have any $\delta$-polynomial section. Indeed, otherwise, $G$ would have a projection onto $R_u(G)\cong C = \k^\delta$, which is impossible, because $G$ is strongly connected~\cite[Example 2.25]{CassSingerJordan}.
\end{remark}

\begin{remark}\label{rmk:reducetoradical}
If $K=\k(x)$ and $\p=\frac{\p}{\p x}$, the knowledge of $R=R_u(\Gal^\delta(\cU))$ allows one to compute $G=\Gal^\delta(\cU)$ algorithmically. Indeed, one can compute the normalizer $N$ of $R$ in $\GL(\omega(\cU))$. Note that $G\subset N$. By the differential version of the Chevalley theorem~\cite[Theorem~5.1]{diffreductive} (see also \cite[proof of Theorem~5.6]{Borel}), there is $\cU_0\in\{\cU\}^{\otimes,\delta}$ and a differential representation $\varrho: N\to \GL(\omega(\cU_0))$ such that $R=\ker\varrho$. The proof of this Chevalley theorem leads to a constructive procedure to find $\cU_0$ and $\varrho$. Since $\Gal^\delta(\cU_0)=\varrho(G)$ is reductive, one can compute it~\cite{MinOvSing}. We can find $G$ as $\varrho^{-1}(\Gal^\delta(\cU_0))$.
\end{remark}

In view of Remark~\ref{rmk:reducetoradical}, our aim is to compute the parameterized differential Galois group of $\cU$.  
To this purpose, we will perform a first reduction that 
will allow us to simplify our computation.

\subsection{A first reduction} \label{subsec:red}

Let $L_1,L_2 \in K[\p]$ be two completely reducible $\p$-operators. Let us 
denote the $\p$-module over $K$ corresponding
to $L_1(y)=0$ (respectively, $L_2(y)=0$)  by $\cL_1$ (respectively, by $\cL_2$).  The $\p$-module $\cU$ corresponding 
to $L_1(L_2(y))=0$ is an extension of  $\cL_1$ by $\cL_2$,
\beq \label{eq: exactseqmod} \txymatrix{
0 \ar[r] &\cL_2 \ar[r]^{i} & \mathcal{U} \ar[r]^{p} & \cL_1
\ar[r] & 0},\eeq
in the category of   $\p$-modules over $K$.  In this section, we 
recall the methods of \cite{BeSing}
to show that we can restrict ourselves to the case
 in which $L_1$ is of the form $\p - \frac{\p b}{b}$ for some $b \in K^*$.

We first describe the reduction process in terms of $\p$-modules.  Since the 
functor $\underline{\Hom}(\cL_1,-)$ is exact, \eqref{eq: exactseqmod} gives the exact sequence:

\beq \label{eq: exactseqmodhom} \txymatrix{
0 \ar[r] & \underline{\Hom}(\cL_1,\cL_2) \ar[r]  & \underline{\Hom}(\cL_1, \mathcal{U}) \ar[r]  &\underline{\Hom}(\cL_1, \cL_1)
\ar[r] & 0}\ \eeq
We pull back \eqref{eq: exactseqmodhom} by the diagonal embedding  $$d : \mathbf{1} \rightarrow \underline{\Hom}(\cL_1, \cL_1), \quad\lambda \mapsto \lambda \id_{\cL_1},$$ where $\bold{1}$ is the unit object. We obtain an exact sequence
\beq \label{eq: exactseqred} \txymatrix{
0 \ar[r] & \underline{\Hom}(\cL_1,\cL_2)  \ar[r] & \cR(\cU)  \ar[r] &\bold{1}
\ar[r] & 0},\eeq
where $\cR(\cU)$ is the $\p$-module deduced from $\cU$ by the pullback.
We call the $\p$-module $\cR(\cU)$ {\em the
reduction} of $\cU$. We recall that,  as a $K$-vector space,   $\cR(\cU)$ coincides
with the set $$\left\{(\phi,\lambda) \in   \underline{\Hom}(\cL_1, \cU) \times \bold{1}\:\big|\: p \circ \phi= \lambda\id_{\cL_1}\right\}.$$ 

\begin{remark}
An effective interpretation of this reduction process in terms of matrix differential equations immediately follows from~\cite[page~15]{BeSing}.
\end{remark}

\begin{proposition} \label{prop:red}
With notation above, we have 
\begin{enumerate}
\item The parameterized differential Galois group $\Galdelta(\underline{\Hom} (\cL_1,\cL_2))$ is a quotient 
of $\Galdelta(\cL_1 \oplus \cL_2)$ and is a reductive linear differential algebraic group;
\item  By Lemma \ref{lemma:unipcocycle}, one can identify $R_u(\Galdelta(\cU))$ (respectively, $R_u(\Galdelta(\cR(\cU)))$)
with a differential algebraic subgroup of $\Hom(\omega(\cL_1),\omega(\cL_2))$ (respectively, of $\Hom\left(\k,\Hom(\omega(\cL_1),\omega(\cL_2))\right)$).
  Then the canonical isomorphism  $$ \phi :\Hom \left( \k,\Hom(\omega(\cL_1),\omega(\cL_2)) \right) \rightarrow 
\Hom(\omega(\cL_1),\omega(\cL_2)),\ \psi \mapsto \psi(1)$$ induces an isomorphism of linear differential algebraic groups between $R_u(\Galdelta(\cR(\cU)))$  and $R_u(\Galdelta(\cU))\,$;
\item  By Lemma \ref{lemma:compact},  $\Galdelta(\cL_1 \oplus \cL_2)$ (respectively,
$\Galdelta(\underline{\Hom} (\cL_1,\cL_2))$) acts on $R_u(\Galdelta(\cU))$ (respectively, on $R_u(\Galdelta(\cR(\cU)))$).
These actions are compatible with the isomorphism $\phi$.
\end{enumerate} 
\end{proposition}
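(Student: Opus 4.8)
The plan is to treat the three assertions in turn, with assertion~(2) carrying essentially all of the content. For~(1), the point is that $\underline{\Hom}(\cL_1,\cL_2)=\cL_1^{\ast}\otimes\cL_2$ is obtained from $\cL_1\oplus\cL_2$ by the linear-algebra operations, so it lies in $\{\cL_1\oplus\cL_2\}^{\otimes,\delta}$. The inclusion of $\delta$-tensor subcategories $\{\underline{\Hom}(\cL_1,\cL_2)\}^{\otimes,\delta}\subset\{\cL_1\oplus\cL_2\}^{\otimes,\delta}$ then yields, by restriction of $\delta$-tensor automorphisms, a surjection $r\colon\Galdelta(\cL_1\oplus\cL_2)\twoheadrightarrow\Galdelta(\underline{\Hom}(\cL_1,\cL_2))$, which is the asserted quotient map. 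Reductivity follows exactly as in Lemma~\ref{lemma:stabunip}: since $\cL_1,\cL_2$ are completely reducible, so is $\cL_1\oplus\cL_2$, hence every object of $\{\cL_1\oplus\cL_2\}^{\otimes}$ is completely reducible; in particular $\underline{\Hom}(\cL_1,\cL_2)$ is, and therefore $\Galdelta(\underline{\Hom}(\cL_1,\cL_2))$ is reductive.

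For~(2), I would first note that $\cR(\cU)$ lies in $\{\cU\}^{\otimes,\delta}$: both $\cL_1$ (a quotient) and $\cL_2$ (a sub) of $\cU$ are subquotients of $\cU$, so $\underline{\Hom}(\cL_1,\cU)$ and the pullback $\cR(\cU)\subset\underline{\Hom}(\cL_1,\cU)\oplus\bold{1}$ belong to $\{\cU\}^{\otimes,\delta}$. This gives a surjection $\pi\colon\Galdelta(\cU)\twoheadrightarrow\Galdelta(\cR(\cU))$, and, since the semisimple quotient $\underline{\Hom}(\cL_1,\cL_2)\oplus\bold{1}$ of $\cR(\cU)$ lies in $\{\cL_1\oplus\cL_2\}^{\otimes,\delta}$, the square $q_{\cR}\circ\pi=r\circ q_{\cU}$ commutes, where $q_\cU,q_\cR$ are the projections with kernels $R_u(\Galdelta(\cU))$ and $R_u(\Galdelta(\cR(\cU)))$ (Lemma~\ref{lemma:unipcocycle} applied to $\cU$ and to $0\to\underline{\Hom}(\cL_1,\cL_2)\to\cR(\cU)\to\bold{1}\to0$). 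Using $\o(\bold{1})=\k$ and $\o(\underline{\Hom}(\cL_1,\cL_2))=\Hom(\o(\cL_1),\o(\cL_2))$, choosing the section $\sigma(1)=(s,1)$ of the reduced sequence coming from a $\k$-section $s$ of $\o(p)$, and computing how $\pi(g)$ acts on $\o(\cR(\cU))$, I would establish the cocycle identity
\[
\phi\big(\zeta_{\cR(\cU)}(\pi(g))\big)=\zeta_\cU(g)\qquad\text{for }g\in R_u(\Galdelta(\cU)).
\]
As $\zeta_\cU,\zeta_{\cR(\cU)}$ are injective with images the subgroups $W,W'$ of Theorem~\ref{thm:structuregaloisgroupsemidirectproduct}, this already gives $\phi(W')\supseteq W$ and shows $\pi$ maps $R_u(\Galdelta(\cU))$ injectively into $R_u(\Galdelta(\cR(\cU)))$.

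The hard part is the reverse inclusion $\phi(W')\subseteq W$, equivalently surjectivity of $\pi$ on unipotent radicals. Repeating the computation for arbitrary $h\in R_u(\Galdelta(\cR(\cU)))$ and any preimage $g$ under $\pi$, the commuting square forces $q_\cU(g)\in M:=\ker r$, and one still gets $\phi(\zeta_{\cR(\cU)}(h))=\zeta_\cU(g)$; so it suffices to show $\zeta_\cU(g)\in W$ whenever $q_\cU(g)\in M$. Now $\zeta_\cU$ descends to a cocycle $\bar\zeta\colon\Galdelta(\cL_1\oplus\cL_2)\to\Hom(\o(\cL_1),\o(\cL_2))/W$ (by Theorem~\ref{thm:structuregaloisgroupsemidirectproduct} the value mod $W$ depends only on the image in $\Galdelta(\cL_1\oplus\cL_2)$); since $M$ acts trivially on $\Hom(\o(\cL_1),\o(\cL_2))$, hence on the quotient, $\bar\zeta|_M$ is a homomorphism into the unipotent group $\Hom(\o(\cL_1),\o(\cL_2))/W$. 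As $M$ is a normal $\delta$-subgroup of the reductive group $\Galdelta(\cL_1\oplus\cL_2)$, it is reductive (Theorem~\ref{thm:decompalmostdirectprodreductive}, \cite{diffreductive}), and a homomorphism from a reductive to a unipotent linear differential algebraic group is trivial (its image is both reductive and unipotent, hence $\{e\}$). Thus $\bar\zeta|_M=0$, i.e. $\zeta_\cU(g)\in W$, giving $\phi(W')=W$ and the desired isomorphism. This is the step I expect to be the main obstacle, precisely because the extension of Theorem~\ref{thm:structuregaloisgroupsemidirectproduct} need not split, so $\bar\zeta$ is nonzero in general and the vanishing must be extracted on $M$ only.

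Finally, (3) is a short equivariance check. The action of $\Galdelta(\cL_1\oplus\cL_2)$ on $\Hom(\o(\cL_1),\o(\cL_2))$ factors through $r$ and the faithful action of $\Galdelta(\underline{\Hom}(\cL_1,\cL_2))$, while the action on $\Hom(\k,\Hom(\o(\cL_1),\o(\cL_2)))$ is trivial on the $\k$-factor; hence $\phi(\psi)=\psi(1)$ satisfies $\phi(\bar g\cdot\psi)=\bar g\cdot\phi(\psi)$. Since by Theorem~\ref{thm:structuregaloisgroupsemidirectproduct} both actions on the unipotent radicals are restrictions of these natural module actions, the isomorphism $\phi$ of~(2) intertwines them, which is exactly the claimed compatibility.
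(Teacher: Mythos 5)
Your parts (1) and (3), and the first half of your part (2) --- placing $\cR(\cU)$ in $\{\cU\}^{\otimes,\delta}$, the commuting square, and the cocycle identity $\phi\circ\zeta_{\cR(\cU)}\circ\pi=\zeta_\cU$ --- all match the paper's proof, which establishes exactly the relation \eqref{eq:compacocycl}. The genuine gap is in the step you yourself single out as the main obstacle: your claimed lemma that \emph{a homomorphism from a reductive to a unipotent linear differential algebraic group is trivial} is false in the differential category. The logarithmic derivative $\k^\times\to\Ga$, $a\mapsto\delta(a)/a$, is a surjective $\delta$-polynomial homomorphism from a reductive linear differential algebraic group (indeed a $\delta$-torus) onto the unipotent group $\Ga$, with kernel $\k^\times\cap\GL_1(C)$; over the $\delta$-closed field $\k$ it is onto. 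So the image of a reductive linear differential algebraic group under a $\delta$-homomorphism need not be reductive, and from the reductivity of $M=\ker r$ you cannot conclude $\bar\zeta|_M=0$. The danger is not hypothetical: whenever $\cL_1\cong\cL_2$, the central scalars of $\Galdelta(\cL_1\oplus\cL_2)$ act trivially on $\Hom(\o(\cL_1),\o(\cL_2))$ and hence lie in $M$, and cocycles of exactly logarithmic-derivative type occur --- the paper's own non-split example following Theorem~\ref{thm:structuregaloisgroupsemidirectproduct}, the group defined by $\delta(b)=\delta(a)/a$, is precisely a nonzero additive cocycle, trivial-action on the target, from $\k^\times$ into a unipotent group. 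Your argument, applied there, would ``prove'' that this cocycle vanishes modulo $W$, which it does not.

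This failure is precisely why the paper's proof of (2) does not stay at the level of linear differential algebraic groups but descends to the non-parameterized theory. The paper shows that the obstruction group $\pi\big(\Stabdelta(\underline{\Hom}(\cL_1,\cL_2))\big)\big/\pi\big(\Stabdelta(\cL_1\oplus\cL_2)\big)$, which is unipotent, has \emph{reductive Zariski closure}: by the ordinary Picard--Vessiot correspondence, $\Stab(\underline{\Hom}(\cL_1,\cL_2))\big/\Stab(\cL_1\oplus\cL_2)=\Gal\big(K_{\cL_1\oplus\cL_2}\big/K_{\underline{\Hom}(\cL_1,\cL_2)}\big)$ is normal in the reductive algebraic group $\Gal\big(K_{\cL_1\oplus\cL_2}/K\big)$ (because $K_{\underline{\Hom}(\cL_1,\cL_2)}/K$ is a PV extension), hence reductive; quotients of reductive \emph{algebraic} groups are reductive; and this algebraic quotient is identified with the Zariski closure of the obstruction group via $\overline{\pi}$. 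Then \cite[Remark 2.9]{MinOvSing} (a linear differential algebraic group whose Zariski closure is reductive is itself reductive) forces the obstruction group, being unipotent and reductive, to be trivial, which is the paper's equality \eqref{eq:equalimages} and yields surjectivity of $\pi$ on unipotent radicals. To repair your proof you must replace the false vanishing lemma by this (or an equivalent) input from ordinary PV theory --- that is, prove reductivity of the Zariski closure of $\zeta_\cU\big(q_\cU^{-1}(M)\big)\big/W$ rather than argue from the reductivity of $M$ alone.
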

\begin{proof}

\begin{enumerate}
\item[]
\item Since $\underline{\Hom}(\cL_1,\cL_2)$ (respectively, $\cL_1 \oplus \cL_2$) 
is a subobject of $\{ \cU\}^{\otimes,\delta}$, its  parameterized differential Galois group is 
a quotient of $\Galdelta(\cU)$ by  $\Stabdelta(\underline{\Hom}(\cL_1,\cL_2))$ 
(respectively,  by $\Stabdelta(\cL_1 \oplus \cL_2) =\Stabdelta(\cL_1) \cap \Stabdelta(\cL_2)$).
It is not difficult to see that we have the inclusion  $$\Stabdelta(\cL_1 \oplus \cL_2)\subset \Stabdelta(\underline{\Hom}(\cL_1,\cL_2))$$ 
Since stabilizers of objects  in $\{\cU\}^{\otimes,\de}$ are normal in $\Galdelta(\cU)$
by Lemma~\ref{lemma:stabnormal}, we can apply \cite[Proposition 2]{Cassunip} to get that 
$$\Galdelta(\underline{\Hom}(\cL_1,\cL_2))=\Galdelta(\cU)\big/\Stabdelta(\underline{\Hom}(\cL_1,\cL_2))$$
is a quotient of $$\Galdelta(\cL_1 \oplus \cL_2)= \Galdelta(\cU)\big/\Stabdelta(\cL_1 \oplus \cL_2)$$
by $$\Stabdelta(\underline{\Hom}(\cL_1,\cL_2))\big/\Stabdelta(\cL_1 \oplus \cL_2).$$
The same reasoning  in the non-parameterized case shows that 
$\Gal(\underline{\Hom}(\cL_1,\cL_2))$ is a quotient of 
$\Gal(\cL_1 \oplus \cL_2)$.
  Since quotients of reductive algebraic groups are reductive,  \cite[Remark 2.9]{MinOvSing} allows us to conclude
  that $\Galdelta(\underline{\Hom}(\cL_1,\cL_2))$ is a reductive linear differential algebraic group.
 
 \item Since $\cR(\cU)$ is an object of $\{\cU\}^{\otimes,\delta}$, 
$\Galdelta(\cR(\cU))$ is a quotient of $\Galdelta(\cU)$, and we denote
  the canonical surjection by $\pi$.
The image of $\Stabdelta(\underline{\Hom}(\cL_1,\cL_2)) $ via $\pi$ coincides with the stabilizer of $\underline{\Hom}(\cL_1,\cL_2)$
in $\Galdelta(\cR(\cU))$ and, thus,  with $R_u(\Galdelta(\cR(\cU)))$ by Lemmas~\ref{lemma:stabunip} and~\ref{lemma:unipcocycle}.

  Let $H \subset R_u(\Galdelta(\cR(\cU)))$ be the image of $\Stabdelta(\cL_1 \oplus \cL_2)$ by $\pi$.
By \cite[Proposition~7, page~908]{cassdiffgr}, $H$ is a  differential algebraic subgroup of 
 $R_u(\Galdelta(\cR(\cU)))$.  Since 
 $\Stabdelta(\cL_1 \oplus \cL_2)$
 is normal in $\Galdelta(\cU)$ and $\pi$ is surjective,  $H$ is normal in $R_u(\Galdelta(\cR(\cU)))$,  and we can consider the quotient map $$p: R_u(\Galdelta(\cR(\cU))) \rightarrow R_u(\Galdelta(\cR(\cU)))\big/H\,.$$
 Since quotients of unipotent linear differential algebraic groups are unipotent by \cite[Theorem~1]{Cassunip},  the linear differential algebraic group $R_u(\Galdelta(\cR(\cU)))/H$ is unipotent.
Note that
\begin{equation}\label{eq:quotbysum}
R_u\big(\Galdelta(\cR(\cU))\big)\big/H=\pi\big( \Stabdelta(\underline{\Hom}(\cL_1,\cL_2))\big)\big/ \pi\big( \Stabdelta(\cL_1 \oplus \cL_2)\big)
\end{equation}
The surjective morphism $\pi$ is induced via $\delta$-Tannakian equivalence
by the inclusion of $\delta$-Tannakian categories $\{\cR(\cU)\}^{\otimes,\de} \subset \{\cU\}^{\otimes,\de}$. This inclusion restricts
to the inclusion of  the usual Tannakian categories $\{\cR(\cU)\}^{\otimes} \subset \{\cU\}^{\otimes}$, which shows, taking the Zariski closure, that $\pi$
extends to a surjective morphism of algebraic groups $\overline{\pi}: \Gal(\cU) \rightarrow
\Gal(\cR(\cU))$. One can show that the quotient $$\overline{\pi}(\Stab(\underline{\Hom}(\cL_1,\cL_2)))\big/ \overline{\pi}( \Stab(\cL_1 \oplus \cL_2))$$
coincides with the Zariski closure of $R_u(\Galdelta(\cR(\cU)))/H$.

Let  $K_{\cL_1 \oplus \cL_2}$ (respectively, $K_{\underline{\Hom}(\cL_1,\cL_2)}$)  denote the usual PV extension of $\cL_1 \oplus \cL_2$ (respectively, of 
$\underline{\Hom}(\cL_1,\cL_2)$) over $K$. Let  $K_{\cU}$ (respectively, $K_{R(\cU)}$)  denote the usual PV extension of $\cU$ (respectively, of 
$\cR(\cU))$) over $K$.  We have the following tower of $\p$-field extensions:
$$
\txymatrix{
& K_\cU \ar@{-}[ld] \ar@{-}[rd] & \\
K_{R(\cU)} \ar@{-}[rd] & & K_{\cL_1 \oplus \cL_2}\ar@{-}[ld] \\
& K_{\underline{\Hom}(\cL_1,\cL_2)}\ar@{-}[d] & \\
 & K&}\
$$
 We see that $$\Gal\left(K_{\cL_1 \oplus\cL_2}\big/K_{\underline{\Hom}(\cL_1,\cL_2)}\right)=\Stab(\underline{\Hom}(\cL_1,\cL_2))\big/
 \Stab(\cL_1 \oplus \cL_2)\,.$$ Since $K_{\underline{\Hom}(\cL_1,\cL_2)}$ is a PV extension of $K$, the group
 $\Gal\left(K_{\cL_1 \oplus \cL_2}\big/K_{\underline{\Hom}(\cL_1,\cL_2)}\right)$ is normal in $\Gal\left(K_{\cL_1 \oplus \cL_2}/K\right)$ by 
 the PV correspondence. Therefore, $\Gal\left(K_{\cL_1 \oplus \cL_2}\big/K_{\underline{\Hom}(\cL_1,\cL_2)}\right) $ is a reductive algebraic group. Since 
 \begin{align*}
 \overline{\pi}: \Stab(\underline{\Hom}(\cL_1,\cL_2))&\big/
 \Stab(\cL_1 \oplus \cL_2))\\
  &\rightarrow \overline{\pi}\big( \Stab(\underline{\Hom}(\cL_1,\cL_2))\big)\big/ \overline{\pi}\big( \Stab(\cL_1 \oplus \cL_2)\big)
 \end{align*}
 is a quotient map, we deduce from the above identifications  that the Zariski closure of $R_u(\Galdelta(\cR(\cU)))/H$  is a reductive algebraic group. We 
 conclude by   \cite[Remark 2.9]{MinOvSing} that $R_u(\Galdelta(\cR(\cU)))/H$ is reductive. 
On the other hand, since $R_u(\Galdelta(\cR(\cU)))/H$ is both unipotent and reductive, it must be equal to $\{e\}$, and we have 
\begin{equation}\label{eq:equalimages}\pi\big(\Stabdelta(\cL_1 \oplus \cL_2)\big)=\pi\big( \Stabdelta(\underline{\Hom}(\cL_1,\cL_2))\big)=R_u(\Galdelta(\cR(\cU)))\,.
\end{equation}
  We recall the notation of Lemma \ref{lemma:unipcocycle}. We denote by 
$s$ a $\k$-linear section of the exact sequence of finite-dimensional representations of $\Galdelta(\cU)$:
$$
\txymatrix{
0 \ar[r] &\omega(\cL_2) \ar[r]^{\omega(i)} &
\omega(\mathcal{U}) \ar[r]^{\omega(p)} & \omega(\cL_1) \ar@{.>}@/^/[l]^{s}
\ar[r] & 0}\,.$$ 
Then, we identify $R_u(\Galdelta(\cU))=\Stabdelta(\cL_1 \oplus \cL_2)$
with the image of $\Stabdelta(\cL_1 \oplus \cL_2)$
by
$$\zeta_\cU : R_u(\Galdelta(\cU)) \rightarrow \Hom_\k(\o(\cL_1),\o(\cL_2))\,,\quad
g \mapsto  \big( x \mapsto  g s(
g^{-1}x ) - s(x)\big)\,.$$ 
Since $\o$ is compatible with $\underline\Hom$, 
the map    
$$r : \k \rightarrow \o(\cR(\cU)),\quad \lambda \mapsto (\lambda s, \lambda),$$
is a $\k$-linear section of $t$
$$\txymatrix{
0 \ar[r] & \Hom(\o(\cL_1),\o(\cL_2)) \ar[r] &\o(\cR(\cU)) \ar[r]^{\quad t} & \k \ar@{.>}@/^/[l]^{\quad r}
\ar[r] & 0}.$$
We apply again   Lemma~\ref{lemma:unipcocycle} to  identify $R_u(\Galdelta(\cR(\cU)))=\pi\big(\Stabdelta(\cL_1 \oplus \cL_2)\big)$
with its image 
via \begin{align*}\zeta_{\cR(\cU)} : \Galdelta(\cR(\cU)) &\rightarrow  \Hom(\k,\Hom_\k(\o(\cL_1),\o(\cL_2)))\\
g &\mapsto \big( \lambda \mapsto  g r(\lambda)
g^{-1} - r(\lambda)\big)\,.\end{align*}
Identifying   $\Hom(\k,\Hom(\o(\cL_1),\o(\cL_2)))$ with $\Hom(\o(\cL_1),\o(\cL_2))$ via $\phi$, we find
that \beq \label{eq:compacocycl}\zeta_\cU = \phi \circ \zeta_{\cR(\cU)} \circ \pi.\eeq  We have
\begin{align*}R_u\big(\Galdelta(\cU)\big)&=\zeta_\cU\big(\Stabdelta(\cL_1 \oplus \cL_2)\big)\\
&=\zeta_{\cR(\cU)} \circ \pi\big(\Stabdelta(\cL_1 \oplus \cL_2)\big) =R_u\big(\Galdelta(\cR(\cU))\big),
\end{align*}
where we have used Remark~\ref{rem:indep}.
\item The compatibility of the actions comes from Lemma \ref{lemma:compact},  \eqref{eq:compacocycl}, and~\eqref{eq:equalimages}.\qed
\end{enumerate}
\end{proof}

We combine  Proposition \ref{prop:red} and Theorem \ref{thm:structuregaloisgroupsemidirectproduct} in the following Theorem. 
 
 \begin{theorem}\label{thm:redparamradunip}
If $\cL_1,\cL_2$ are completely reducible $\p$-modules over $K$ 
 and if $\cU$ is a $\p$-module extension of $\cL_1$ by $\cL_2$,
 then
 \begin{enumerate}
 \item 
 $\Galdelta(\cU)$ is an extension of $\Galdelta(\cL_1 \oplus  \cL_2)$
 by a $\delta$-subgroup $W$ of 
 $ \o(\underline{\Hom}(\cL_1,\cL_2)) $. 
 \item $W=R_u(\Galdelta(\cR(\cU)))$, where $\cR(\cU)$ is an extension of  $\mathbf{1}$ by
 the completely reducible $\p$-module $\underline{\Hom}(\cL_1,\cL_2)$, and  the action of $\Gal^\delta(\cL_1 \oplus  \cL_2)$
 on $W$ is given by composing the quotient map of $\Galdelta(\cL_1\oplus\cL_2)$ on $\Gal^\delta(\underline{\Hom}(\cL_1,\cL_2))$
 with the
 action of $\Gal^\delta(\underline{\Hom}(\cL_1,\cL_2))$ on $\o(\underline{\Hom}(\cL_1,\cL_2))$.
 \end{enumerate}
 \end{theorem}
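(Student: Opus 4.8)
The plan is to obtain Theorem~\ref{thm:redparamradunip} as a direct synthesis of Theorem~\ref{thm:structuregaloisgroupsemidirectproduct} with Proposition~\ref{prop:red}, since all the genuine work has already been done and what remains is to package it in the language of the reduction $\cR(\cU)$. First I would invoke Theorem~\ref{thm:structuregaloisgroupsemidirectproduct}: it presents $\Galdelta(\cU)$ as an extension of $\Galdelta(\cL_1\oplus\cL_2)$ by the $\delta$-subgroup $W=R_u(\Galdelta(\cU))$, which the cocycle map $\zeta_\cU$ of Lemma~\ref{lemma:unipcocycle} identifies with a $\delta$-closed subgroup of $\Hom(\o(\cL_1),\o(\cL_2))$. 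Because the $\delta$-fiber functor $\o$ is compatible with the internal Hom, one has $\Hom(\o(\cL_1),\o(\cL_2))=\o(\underline{\Hom}(\cL_1,\cL_2))$, so $W$ is a $\delta$-subgroup of $\o(\underline{\Hom}(\cL_1,\cL_2))$. This is part~(1).

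For part~(2), I would recall from Section~\ref{subsec:red} that the reduction $\cR(\cU)$ is built as the pullback~\eqref{eq: exactseqred}, namely $0\to\underline{\Hom}(\cL_1,\cL_2)\to\cR(\cU)\to\mathbf{1}\to 0$, so by construction $\cR(\cU)$ is an extension of $\mathbf{1}$ by $\underline{\Hom}(\cL_1,\cL_2)$. Complete reducibility of the latter follows exactly as in the proof of Lemma~\ref{lemma:stabunip}: $\cL_1$ and $\cL_2$, hence $\cL_1\oplus\cL_2$, are completely reducible, so every object of $\{\cL_1\oplus\cL_2\}^{\otimes}$ is completely reducible, and $\underline{\Hom}(\cL_1,\cL_2)=\cL_1^*\otimes\cL_2$ lies in this category. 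The identification $W=R_u(\Galdelta(\cR(\cU)))$ is then precisely Proposition~\ref{prop:red}(2): the canonical isomorphism $\phi$ carries $R_u(\Galdelta(\cR(\cU)))$ onto $R_u(\Galdelta(\cU))=W$, with the matching of the two cocycles recorded in the relation $\zeta_\cU=\phi\circ\zeta_{\cR(\cU)}\circ\pi$ of~\eqref{eq:compacocycl}.

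It remains to describe the action. Theorem~\ref{thm:structuregaloisgroupsemidirectproduct}(2) gives the action of $\Galdelta(\cL_1\oplus\cL_2)$ on $W$ as the conjugation action $g*\psi=g\circ\psi\circ g^{-1}$ on $\Hom(\o(\cL_1),\o(\cL_2))=\o(\underline{\Hom}(\cL_1,\cL_2))$, which is exactly the Tannakian representation of $\Galdelta(\cL_1\oplus\cL_2)$ on the object $\underline{\Hom}(\cL_1,\cL_2)$. By Proposition~\ref{prop:red}(1) the group $\Galdelta(\underline{\Hom}(\cL_1,\cL_2))$ is a quotient of $\Galdelta(\cL_1\oplus\cL_2)$, and, being a $\delta$-closed subgroup of $\GL(\o(\underline{\Hom}(\cL_1,\cL_2)))$ by Theorem~\ref{thm:difftanequ}, it acts faithfully on $\o(\underline{\Hom}(\cL_1,\cL_2))$. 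Hence the action of $\Galdelta(\cL_1\oplus\cL_2)$ on $W$ factors as the quotient map onto $\Galdelta(\underline{\Hom}(\cL_1,\cL_2))$ followed by the action of the latter on $\o(\underline{\Hom}(\cL_1,\cL_2))$, which is the asserted description; its compatibility with $\phi$ is Proposition~\ref{prop:red}(3).

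Since the statement is a repackaging, I do not expect a single hard computational step. The care lies entirely in part~(2): matching the two cocycle descriptions $\zeta_\cU$ and $\zeta_{\cR(\cU)}$ through $\phi$ and $\pi$ via~\eqref{eq:compacocycl}, and verifying that the conjugation action on $W$ genuinely factors through $\Galdelta(\underline{\Hom}(\cL_1,\cL_2))$ rather than only through the larger reductive group $\Galdelta(\cL_1\oplus\cL_2)$. That factorization is where the content of the reduction is actually consumed, so I would treat it as the crux of the argument.
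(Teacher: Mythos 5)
Your proposal is correct and follows exactly the paper's route: the paper gives no separate argument for Theorem~\ref{thm:redparamradunip}, stating only that it combines Theorem~\ref{thm:structuregaloisgroupsemidirectproduct} (for the extension structure in part~(1)) with Proposition~\ref{prop:red} (parts~(2) and~(3) for the identification $W=R_u(\Galdelta(\cR(\cU)))$ via $\phi$ and the compatibility of actions), which is precisely your synthesis. Your additional details --- complete reducibility of $\underline{\Hom}(\cL_1,\cL_2)=\cL_1^*\otimes\cL_2$ via closure of $\{\cL_1\oplus\cL_2\}^{\otimes}$ under tensor constructions, the cocycle matching $\zeta_\cU=\phi\circ\zeta_{\cR(\cU)}\circ\pi$, and the factorization of the conjugation action through the faithful Tannakian action of $\Galdelta(\underline{\Hom}(\cL_1,\cL_2))$ --- are all consistent with what the paper establishes inside the proofs of Proposition~\ref{prop:red} and Lemma~\ref{lemma:compact}.
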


\subsection[The unipotent radical of the parameterized differential Galois group of an extension]{The unipotent radical of the parameterized differential Galois group of an extension of $\bold{1}$ by a completely reducible $\p$-module $\cL$}\label{sec:computation}
Let $\cL$ be a completely reducible $\p$-module over $K$ and $\cU$ be an extension of $\bold{1}$ by $\cL$.
In this section, we study $R_u(\Galdelta(\cU))$.

In terms of $\p$-operators, the situation corresponds to the following.
Let $L \in K[\p]$ be a completely reducible $\p$-operator and  $\cL$ be the associated $\p$-module.
An extension $\cU$ of $\bold{1}$ by $\cL$  corresponds to an inhomogeneous differential equation of the form $L(y)=b$ for some $b \in K^*$.
The main result of \cite{BeSing}  is to show 
that $R_u(\Gal(\cU))=\o(\cL_0)$, where $\cL_0$ is the largest $\p$-module
of $\cL$ such that 
\begin{enumerate}
\item $L=L_1L_0\,$;
\item $L_1(y)=b$ has a solution in $K$.
\end{enumerate}
From Lemma \ref{lemma:unipcocycle}, we know that  $R_u(\Galdelta(\cU))$ can be identified with 
a differential algebraic subgroup $W$ of $\omega(\cL_0)$, stable under the natural action of $\Galdelta(\cL)$
on $\omega(\cL)$.  In \cite{HardouinSemCongres},  the result of \cite{BeSing} was rephrased in Tannakian 
terms and it was proved that  $\cL_0$  is the smallest subobject of $\mathcal{L}$ such that the pushout of the extension $\cU$
by the quotient map $ \pi : \mathcal{L} \rightarrow \mathcal{L}/\cL_0$ is a trivial (split) extension. Such a characterization 
 no longer holds  in general in the parameterized setting.
Indeed,  the classification  of  differential algebraic subgroups of vector groups shows that $W$ coincides with the zero set 
of a finite system of linear  homogeneous differential equations with coefficients in $\k$.  Therefore,  we have two possibilities:
\begin{itemize}
\item either  $W$ is given by linear homogeneous polynomials and it is a finite-dimensional vector space over $\k$, that is, $W$
is an algebraic subgroup of $\o(\cL_0)\,$; 
\item or   $W$ is given by linear homogeneous $\delta$-polynomials of order greater than $0$, and $W$ is a
vector space over $C=\k^\delta$.
\end{itemize}
 In the first case, 
we deduce from the $\delta$-Tannakian equivalence for the category $\{\cL\}^{\otimes,\delta}$  that $W=\o(\widetilde{\cL_0})$ for a submodule $\widetilde{\cL_0}$ of $\cL$ if and only if it is an algebraic subgroup of $\o(\cL_0)$. In this situation, we show that $\widetilde{\cL_0}$ is the smallest $\p$-submodule
of $\cL$ such that the parameterized differential Galois group of the pushout of the extension $\cU$
by the quotient map $ \pi : \mathcal{L} \rightarrow \mathcal{L}/\widetilde{\cL_0}$ is reductive (see Theorem~\ref{thm:purelynonconstantuniprad}). This last condition can 
be tested by an algorithm contained in \cite{MinOvSing}.

If $W$ is not given by linear homogeneous $\delta$-polynomials of order $0$, then $W$ is not of the form $\omega(\widetilde{\cL})$ for any $\widetilde\cL$. Moreover, the order of the defining equations of $W$ can be as high as required even for second order differential equations:
\begin{example}For $n \ge 0$, let $$z(x,t,n) = \sum_{j=0}^n t^j\ln(x+j)\,;\quad a(x,t,n) = \frac{\partial z(x,t,n)}{\partial x} =  \sum_{j=0}^n \frac{t^j}{x+j} \in \k(x)\,,$$
where $\k$ is a differentially closed field with respect to $\partial/\partial t$ containing $\mathbb{Q}(t)$. Then the function $z(x,t,n)$ satisfies the following second order differential equation in $y(x,t)$ over $\k(x)\,$:
$$
\frac{\partial\left(\frac{\partial y(x,t)}{\partial x}\big/a(x,t,n)\right)}{\partial x}=0\quad\iff\quad \frac{\partial^2y(x,t)}{\partial x^2}-\frac{\frac{\partial a(x,t,n)}{\partial x}}{a(x,t,n)}\frac{\partial y(x,t)}{\partial x} =0.
$$
Since $\ln(x),\ldots,\ln(x+n)$ are algebraically independent over $\k(x)$ by \cite{OstrovskiActa,DHWrelations}, and $\frac{\partial^{n+1} z(x,t,n)}{\partial t^{n+1}}=0\,$,  and 
$$
\k(x)(\ln(x),\ldots,\ln(x+n)) = \k(x)\left(\frac{\partial^j(z(x,t,n))}{\partial t^j}\:\Big|\: j \ge 0\right)\,,
$$
we have 
$$
\Galdelta = \left\{\begin{pmatrix}1& a\\0&1\end{pmatrix}\:\Big|\: \frac{\partial^{n+1}a}{\partial t^{n+1}}=0\right\}\,.
$$ 
\end{example}

In Section~\ref{subsec:decompconstantnonconst}, we give a  decomposition of  $\cL$  into ``constant and purely non-constant'' parts, which 
allows us to distinguish between the two cases for the unipotent radical $W$ described above.  In  Section~\ref{sec:purelynonconstantcase}, we  treat the ``purely non-constant case''. In Section~\ref{sec:mergeconstantnonconstant}, we give a general algorithm
to compute $R_u(\Galdelta(\cU))$ under the assumption that $\cL$ has no non-zero trivial $\p$-submodules in the sense  of Definition~\ref{defn:trivial objects}.

\subsubsection{Decomposition of the completely reducible $\p$-module $\cL$}\label{subsec:decompconstantnonconst}

 The following lemma  gives a  decomposition of  a completely reducible
 $\p$-module into a direct sum of $\p$-modules, a ``constant'' one and a ``purely non-constant'' one. 

\begin{lemma}\label{lemma:decompconst}
Let $\cL $ be a completely reducible $\p$-module and $\rho :\Galdelta(\cL) \rightarrow \Gl(\o(\cL))$ be the 
representation of the parameterized differential Galois group of $\cL$ on $\o(\cL)$. Then  there exist $\p$-submodules $\cL_c$ and $\cL_{nc}$ of 
$\cL$ such that
\begin{itemize}
\item $\cL=\cL_{c} \oplus \cL_{nc}\,$;
\item the representation of $\Galdelta(\cL)$ on $\cL_c$ is conjugate to constants in $\Gl(\o(\cL_c))$, that is, 
any differential system associated to $\cL_c$ is isomonodromic by Proposition \ref{prop:compintconjconts};
\item $\cL_c$ is maximal for the properties above, that is, there is no non-zero $\p$-submodule $\mathcal N$  of $\cL_{nc}$
such that the representation of $\Galdelta(\cL)$ on $\mathcal N$ is conjugate to constants in $\Gl(\o(\mathcal{N}))$.
\end{itemize} 
\end{lemma}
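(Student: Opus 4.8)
The plan is to read off the decomposition from the isotypic decomposition of $V:=\o(\cL)$ as a module over $G:=\Galdelta(\cL)$, using the $\delta$-Tannakian dictionary of Theorem~\ref{thm:difftanequ} to translate between $\p$-submodules of $\cL$ and $G$-submodules of $V$. Since $\cL$ is completely reducible, $V$ is a semisimple $G$-module, so it splits canonically as $V=\bigoplus_j V^{(j)}$ into isotypic components, where $V^{(j)}$ is the part of type $S_j$ for the distinct simple constituents $S_j$. I call a simple $G$-module $S_j$ \emph{constant} if the representation $G\to\Gl(S_j)$ is conjugate to constants, and I set $V_c$ to be the sum of those $V^{(j)}$ with $S_j$ constant and $V_{nc}$ the sum of the remaining ones. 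These are $G$-submodules, and the direct sum $V=V_c\oplus V_{nc}$ of $G$-modules corresponds, via Theorem~\ref{thm:difftanequ} and the exactness of $\o$, to a decomposition $\cL=\cL_c\oplus\cL_{nc}$ into $\p$-submodules with $\o(\cL_c)=V_c$ and $\o(\cL_{nc})=V_{nc}$.

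First I would verify that $\cL_c$ has the required property. Each constant $V^{(j)}$ is $G$-isomorphic to $S_j^{\oplus m_j}$, which in an adapted basis is block diagonal with equal blocks $\rho|_{S_j}$; conjugating each block by a matrix carrying $\rho|_{S_j}$ into $\Gl(S_j)(C)$ and assembling these block-diagonally shows that $G$ acts on $V_c$ conjugate to constants. The equivalence with isomonodromy in the second bullet is then Proposition~\ref{prop:compintconjconts}. The substantive point is maximality. Suppose $\mathcal N\subset\cL_{nc}$ is a non-zero $\p$-submodule on which $G$ acts conjugate to constants. Then $\o(\mathcal N)\subset V_{nc}$ is a non-zero semisimple $G$-module, hence contains a simple direct summand $S$, whose isomorphism type is among the simple constituents of $V_{nc}$ and is therefore non-constant. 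It thus suffices to show that a simple $G$-direct-summand of a representation conjugate to constants is again conjugate to constants, which contradicts the non-constancy of $S$ and finishes the proof.

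The hard part is exactly this descent of ``conjugate to constants'' to a simple summand, and here I would use that $C=\k^\delta$ is algebraically closed (being the field of constants of a differentially closed field) together with semisimplicity. After conjugating by some $h$ so that $\sigma:=h\rho h^{-1}$ sends $G$ into $\Gl(\o(\mathcal N))(C)$, let $\overline H$ be the Zariski closure of $\sigma(G)$; it is defined over $C$, contained in $\Gl(\o(\mathcal N))(C)$, and reductive because $\o(\mathcal N)$ is semisimple (Proposition~\ref{prop:cr}). Over the algebraically closed field $C$ the simple $\overline H$-modules are all $C$-rational and stay simple after base change to $\k$, since Schur's lemma gives $\End_{\overline H}=C$ and hence $\End=\k$ after extending scalars. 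Consequently the simple $\overline H$-submodule $hS$ is isomorphic to $S_0\otimes_C\k$ for a $C$-rational simple $\overline H$-module $S_0$, and such a base change is realized by constant matrices; since being conjugate to constants is invariant under $G$-module isomorphism, $\rho|_S\cong\sigma|_{hS}$ is conjugate to constants, the desired contradiction. I expect this algebraicity-and-descent step to be the main obstacle, the remainder being a translation through Theorem~\ref{thm:difftanequ} and block-diagonal bookkeeping.
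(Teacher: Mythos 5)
Your proof is correct and essentially coincides with the paper's: the paper likewise decomposes $\cL=\cL_1\oplus\dots\oplus\cL_r$ into irreducible $\p$-submodules, takes $\cL_c$ (respectively $\cL_{nc}$) to be the sum of those $\cL_i$ on which $\Galdelta(\cL)$ acts conjugate to constants (respectively not), and stops there, so your isotypic decomposition of $\o(\cL)$ is the same partition transported through the Tannakian equivalence of Theorem~\ref{thm:difftanequ}. You additionally spell out the maximality bullet (descent of conjugacy-to-constants to a simple summand, using semisimplicity and the algebraically closed field $C=\k^\delta$), which the paper treats as immediate; that verification is sound apart from two cosmetic slips --- the Zariski closure $\overline{H}$ taken inside $\Gl(\o(\cN))$ over $\k$ is \emph{defined over} $C$ but not contained in the $C$-points (you want its $C$-form), and ``$\End=\k$'' by itself does not imply simplicity, though absolute simplicity of simple modules over the algebraically closed field $C$ is standard and repairs the step.
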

\begin{proof}
 Let $\cL_1, \dots, \cL_r$ be irreducible $\p$-submodules 
such that $\cL=\cL_1 \oplus \ldots \oplus \cL_r$. We have $$\Gl(\o(\cL)) = \prod_{i=1}^r \Gl(\o(\cL_i))\,.$$  Let $S$ be the set of indices $i$  in $\{1,\dots,r\}$
such that  the representation of $\Galdelta(\cL)$ on $\omega(\cL_i)$ is conjugate to constants in $\Gl(\o(\cL_i))$. Setting   $$\cL_c =\bigoplus_{i \in S} \cL_i\quad  
\text{and}\quad \cL_{nc}= \bigoplus_{i \notin S} \cL_i$$ allows to conclude the proof.
\qed\end{proof}

\begin{remark}
The above construction is effective.  Let $\cL$ be a completely reducible $\p$-module over $K=\C(z)$ with $\p(z)=1$ and $\p(\C)=0$. There are many algorithms that  compute a factorization of $\cL$ into a direct sum of  irreducible $\p$-submodules: see, for instance, \cite{vanHoeijfactor,singtestred}. Thus, we can find a linear differential system associated to $\cL$ of the form 
$$\p (Y)=\begin{pmatrix}A_1 & 0&  \dots & 0 \\
0 &A_2 & \dots & 0  \\
  \vdots & \ddots & \ddots &\vdots \\
  0 & \dots & 0 & A_r \end{pmatrix} Y$$ with $A_i \in K^{n_i \times n_i}$ for all $i=1,\dots,r$ and such that $\p(Y)=A_i Y$ is an irreducible differential system. For all $i=1,\dots,r$, let  $\cL_i$  be a $\p$-module associated to $\p(Y)=A_i Y$. Let $S$ be the set of indices $i$ such that  there exists a matrix $B_i \in K^{n_i \times n_i}$ such that  $$ \delta(A_i)-\p(B_i)=B_iA_i -A_iB_i\,.$$  Since there are algorithms to find rational solutions of linear differential systems (see \cite{barkatou}), the construction of the set $S$ is also effective.
  We can set $$\cL_c =\bigoplus_{i \in S} \cL_i\quad  
\text{and}\quad \cL_{nc}= \bigoplus_{i \notin S} \cL_i\,.$$
    \end{remark}
This decomposition motivates the following definition.

\begin{definition}
A $\p$-module $\cL$ over $K$ is said to be constant if the representation of $\Galdelta(\cL)$ on $\o(\cL)$ is conjugate to constants in $\GL(\o(\cL))$.
On the contrary, the $\p$-module $\cL$ is said to be \textit{purely non-constant} if  there is no non-zero $\p$-submodule $\mathcal N$  of $\cL$
such that the representation of $\Galdelta(\cL)$ on $\o(\mathcal N)$ is conjugate to constants in $\Gl(\o(\mathcal{N}))$.
\end{definition}

\begin{remark}We say that  a  $G$-module $V$ is {\em purely non-constant} if, for every non-zero $G$-submodule $W$ of $V$, the induced representation $\rho: G \rightarrow \GL(W)$ is non-constant. By the Tannakian equivalence,
a $\p$-module $\cL$ is purely non-constant if and only if the $\Galdelta(\cL)$-module
$\o(\cL)$ is purely non-constant.
\end{remark}

Recall that $\cU$ is a $\p$-module extension of $\bold{1}$ by $\cL$.
We consider the  pushout of 
$$\txymatrix{
0 \ar[r] &\cL \ar[r] & \cU \ar[r] & \bold{1}
\ar[r] & 0}\ 
$$
by the projection of $\cL$ on $\cL_c$ (respectively, on 
$\cL_{nc}$). We find two exact sequences of $\p$-modules:
 \beq\label{eq:exactseqconst2}  \txymatrix{
0 \ar[r] &\cL_c \ar[r] & \cU_c \ar[r] & \bold{1}
\ar[r] & 0},\eeq 
and 
 \beq \label{exactseqnonconst2}  \txymatrix{
0 \ar[r] &\cL_{nc} \ar[r] & \cU_{nc} \ar[r] & \bold{1}
\ar[r] & 0}.\eeq 
We deduce from Lemma \ref{lemma:unipcocycle} that
\begin{itemize}
\item $R_u(\Galdelta(\cU))$ is a differential algebraic subgroup  of $\o(\cL)\,$;
\item $R_u(\Galdelta(\cU_c))$ is a differential algebraic subgroup  of $\o(\cL_c)\,$;
\item $R_u(\Galdelta(\cU_{nc}))$ is a differential algebraic subgroup  of $\o(\cL_{nc})$.
\end{itemize}

The quotient $\Galdelta(\cU_c)\big/ R_u(\Galdelta(\cU_c))$ is $\Galdelta(\cL_c)$, which is, by construction, conjugate to constants.
We can use \cite{MinOvSingunip} to compute $R_u(\Galdelta(\cU_c))$. Section \ref{sec:purelynonconstantcase} shows how to compute the unipotent radical of  the parameterized differential Galois group of  an extension of $\bold{1}$ by a purely
 non constant completely reducible module.  Finally, Section~\ref{sec:mergeconstantnonconstant} shows how to combine Section~\ref{sec:purelynonconstantcase}
 with \cite{MinOvSingunip} to deduce $R_u(\Galdelta(\cU))$ from  the computation of $R_u(\Galdelta(\cU_c))$ and  $R_u(\Galdelta(\cU_{nc}))\,$.

\subsubsection{The purely non-constant case}\label{sec:purelynonconstantcase}

The aim of this section is to prove the following theorem.

\begin{theorem}\label{thm:purelynonconstantuniprad}
Let $\cL$ be a purely non-constant completely reducible $\p$-module over $K$. Let $\cU$
be a {$\p$-module extension} of $\bold{1}$ by $ \cL$. Then,  $R_u(\Galdelta(\cU))=\o(\widetilde{\cL_0})$, where $\widetilde{\cL_0}$ is the smallest $\p$-submodule of $\cL$
 such that $\Galdelta(\cU/\widetilde{\cL_0})$ is reductive.
\end{theorem}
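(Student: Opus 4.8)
The plan is to exploit the cocycle description of $R_u(\Galdelta(\cU))$ from Lemma~\ref{lemma:unipcocycle}, which identifies this unipotent radical with a $\delta$-closed subgroup $W\subset\o(\cL)$ that is stable under the natural action of $\Galdelta(\cL)$ on $\o(\cL)$. First I would observe that, since $\cL$ is purely non-constant and completely reducible, Proposition~\ref{prop:red} reduces the problem to this exact setting, so the task becomes: find the smallest $\p$-submodule $\widetilde{\cL_0}\subset\cL$ with $W=\o(\widetilde{\cL_0})$. The crux is to show that such a submodule exists, i.e.\ that $W$ is in fact of the form $\o(\widetilde{\cL_0})$ rather than a proper $C$-subspace. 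Here is where the purely-non-constant hypothesis enters decisively: the dichotomy discussed before the theorem shows that $W$ is either an algebraic (hence $\k$-linear, $\Galdelta(\cL)$-stable) subgroup of $\o(\cL)$, or else is cut out by $\delta$-polynomials of positive order and is merely a $C$-vector space. I would argue that in the purely non-constant case the second possibility cannot occur on any irreducible constituent.

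\textbf{Eliminating the non-algebraic alternative.} The main obstacle is precisely ruling out that $W$ has defining equations of positive $\delta$-order. The strategy is to project $W$ onto each irreducible summand of $\cL$ and use the structure theory for connected semisimple differential algebraic groups developed in Section~\ref{sec:repsplit} together with Corollary~\ref{cor:diag}. Since $\cL$ is purely non-constant, for each irreducible summand $\cL_i$ the $\Galdelta(\cL)$-module $\o(\cL_i)$ is simple and non-constant, so by Corollary~\ref{cor:diag} there is a $\delta$-torus $T\subset\Galdelta(\cL)$ acting on $\o(\cL_i)$ semisimply and non-constantly. A $\delta$-stable subgroup of $\o(\cL_i)$ that is a $C$-vector space but not a $\k$-vector space would have to be invariant under this $T$-action; examining the weights, I would show that non-constancy of the $T$-action forces any $T$-stable proper $C$-subspace either to vanish or to be all of $\o(\cL_i)$. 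In other words, the non-constant weight spaces prevent the appearance of genuinely $\delta$-differential (positive-order) defining equations, so $W$ is $\k$-linear.

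\textbf{Extracting the submodule and its minimality.} Once $W$ is shown to be a $\k$-linear, $\Galdelta(\cL)$-stable subspace of $\o(\cL)$, the $\delta$-Tannakian equivalence of Theorem~\ref{thm:difftanequ} applied to the category $\{\cL\}^{\otimes,\delta}$ immediately yields a $\p$-submodule $\widetilde{\cL_0}\subset\cL$ with $\o(\widetilde{\cL_0})=W=R_u(\Galdelta(\cU))$; this is the same mechanism invoked in the first bullet of the discussion preceding the theorem. It then remains to identify $\widetilde{\cL_0}$ as the smallest submodule making the quotient Galois group reductive. For this I would consider, for a submodule $\cN\subset\cL$, the pushout extension giving $\cU/\cN$, whose unipotent radical is the image of $W$ in $\o(\cL/\cN)$ under the functoriality of $\zeta$ (as in Lemma~\ref{lemma:compact} and the identifications of Proposition~\ref{prop:red}). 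The quotient $\Galdelta(\cU/\cN)$ is reductive exactly when this image vanishes, i.e.\ when $W\subset\o(\cN)$, equivalently $\widetilde{\cL_0}\subset\cN$. Thus $\widetilde{\cL_0}$ is the smallest $\cN$ with $\Galdelta(\cU/\cN)$ reductive, which is the assertion. I expect the weight-space analysis ruling out positive-order equations to be the genuinely substantive step, with the Tannakian translation and the minimality bookkeeping being comparatively formal.
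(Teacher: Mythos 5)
Your overall skeleton---the cocycle identification of $R_u(\Galdelta(\cU))$ with a $\Galdelta(\cL)$-stable $\delta$-subgroup $W\subset\o(\cL)$, the Tannakian extraction of a submodule once $W$ is known to be $\k$-linear, and the easy direction of minimality---is indeed the paper's. But at the two points carrying the real weight, your argument has genuine gaps.

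First, the $\k$-linearity step. Your claimed dichotomy, that non-constancy of the $T$-action forces every $T$-stable $C$-subspace of $\o(\cL_i)$ to vanish or be everything, is false: individual weight spaces are proper $T$-stable subspaces as soon as there are two weights, and even inside one weight space $V_\chi$ the $C$-span of an orbit $\chi(T)v$ is a $T$-stable $C$-subspace that in general is neither $0$ nor $\k v$. Mere $T$-stability cannot give the conclusion; full $G$-invariance is needed, and even then the mechanism is different. The paper's Proposition~\ref{prop:unipradvectorgroup} proceeds by taking a \emph{minimal} non-zero $G$-invariant $\delta$-subgroup $B\subset A$ (after reducing to connected $G$ via Propositions~\ref{prop:cr} and~\ref{prop:connectedcompconjconst}), using the torus of Corollary~\ref{cor:diag} only to produce a \emph{single} non-constant scalar $x$ with $xB=B$, and then invoking Kolchin's theorem that the stabilizer $S=\{a\in\k \mid aB\subset B\}$, being simultaneously a $C$-subalgebra and a $\delta$-subgroup of the additive group of $\k$, equals $C$ or $\k$. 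This last ingredient, absent from your sketch, is what converts one non-constant scalar into $\k$-linearity; and an induction using invariant complements, rather than your projections onto the summands $\o(\cL_i)$ (which lose information about subgroups sitting diagonally across isotypic summands), recovers the statement for $A$ itself.

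Second, the minimality step. Your sentence ``$\Galdelta(\cU/\cN)$ is reductive exactly when this image vanishes'' hides the hardest point of the whole proof. The direction ``reductive $\Rightarrow$ image of $W$ vanishes'' is formal, since the image of a normal unipotent subgroup under a surjection is normal unipotent. The converse fails for general surjections of linear differential algebraic groups: unlike characteristic-zero algebraic groups, an LDAG surjection need not carry the unipotent radical onto that of the target---a reductive group can even have a unipotent quotient, as the logarithmic derivative $\k^\times\twoheadrightarrow\Ga$, $x\mapsto\delta(x)/x$, shows. So from the vanishing of the image of $W$ in $\o(\cL/\cW)$ you cannot conclude that $R_u(\Galdelta(\cU/\cW))=\{e\}$: a priori the quotient could acquire a unipotent radical from the central $\delta$-torus of $\Galdelta(\cL)$ by exactly this phenomenon, and ``functoriality of $\zeta$'' does not see it, because $\zeta_{\cU/\cW}$ is injective only on the unipotent radical of the quotient, not pulled back through $\varrho$. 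The paper closes this hole with Propositions~\ref{prop:aux} and~\ref{rem:aux}: quasi-simple components are perfect and hence have no unipotent images, so any excess unipotent radical downstairs would be central in the identity component, and the purely non-constant hypothesis (which guarantees $\o(\cL/\cW)$ has no non-zero trivial submodule, fed in through a cocycle computation) rules that out. Without an argument of this kind, your proof that $\Galdelta(\cU/\widetilde{\cL_0})$ is reductive for $\o(\widetilde{\cL_0})=W$---the existence half of the theorem---is missing.
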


By Theorem \ref{thm:redparamradunip},  $R_u(\Galdelta(\cU))$ is a $\delta$-closed subgroup  of 
$\o(\cL)$, which is stable under the action of $\Galdelta(\cL)$.  We  show that any such subgroup  is a $\k$-vector subspace.
In this attempt, we  first treat the cases in which $\Galdelta(\cL)$ is a torus or  $\SL_2$. We   conclude with the general situation and the proof of Theorem~\ref{thm:purelynonconstantuniprad}.

The algorithm contained in \cite{MinOvSing} allows one to test whether the unipotent radical of a linear algebraic group is trivial. This algorithm relies on bounds
 on the order of the defining equations of the parameterized differential Galois group.
Combined with Theorem \ref{thm:purelynonconstantuniprad}, we find a complete algorithm to compute $R_u(\Galdelta(\cU))$.

Theorem \ref{thm:purelynonconstantuniprad} implies among other things that  $R_u(\Galdelta(\cU))$ is an
algebraic subgroup of $R_u(\Gal(\cU))$. Despite the fact that $\Galdelta(\cU)$ (respectively, $\Galdelta(\cL)$) is Zariski dense
in $\Gal(\cU)$ (respectively, $\Gal(\cL)$), it might happen that $R_u(\Galdelta(\cU))$ is contained in a proper Zariski closed subgroup of $R_u(\Gal(\cU))$ as it is shown in the following example.
\begin{example}\label{exa:paramradunipalgradunip}
Let $V=\Span_\k\{x^2,xy,y^2,x'y-xy'\}\subset \k\{x,y\}$, and let us consider the following representation $\rho:\PSL_2 \to \GL(V)$ (cf. \cite[Example~3.7]{MinOvRepSL2}):
\begin{equation}\label{eq:repSL2}
\begin{pmatrix}
a&b\\
c& d
\end{pmatrix}\
\text{mod}\ \left\{\begin{pmatrix}
1&0\\
0& 1
\end{pmatrix},\begin{pmatrix}
-1&0\\
0& -1
\end{pmatrix}\right\}
\mapsto
\begin{pmatrix}
a^2 & ab & b^2 &a'b-ab'\\
2ac & ad+bc& 2bd & 2(bc'-ad')\\
c^2 & cd & d^2 &c'd-cd'\\
0 &0&0& 1
\end{pmatrix}\,.
\end{equation}
Note that  $\overline{\rho(\PSL_2)} = \Ga^3  \rtimes \PSL_2 $, and we have: $R_u(\PSL_2)=\{e\}$ whereas $R_u(\Ga^3  \rtimes \PSL_2)=\Ga^3$.
By \cite[Theorem~1.1 and Lemma~2.2]{Singerinv}, we can construct a $\p$-module $\cU$ such that $\Galdelta(\cU)=\PSL_2$, and $\rho$ is the representation of $\Galdelta(\cU)$ on $\omega(\cU)$ (so that $\Gal(\cU) = \Ga^3  \rtimes \PSL_2$). We can also construct a $\p$-module $\cL$ such that $\cU$ is an extension of $\bold{1}$ by $\cL$ in the given representation. 
\end{example}

For a subset $B$ of a $\k$-vector space $V$, we denote $\k B$ the smallest $\k$-subspace of $V$ that contains $B$. Note that $\k B$ consists of all finite linear combinations of elements of $B$ with coefficients in~$\k$.

\begin{proposition}\label{prop:unipradvectorgroup}
Let $G$ be a reductive linear differential algebraic group and $V$  a purely non-constant completely reducible
$G$-module. Then every $G$-invariant $\delta$-subgroup $A\subset V$ is a submodule.
\end{proposition}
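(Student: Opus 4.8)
The plan is to prove the stronger statement that $A$ is a $\k$-subspace of $V$; once this is known, $G$-invariance makes $A$ a submodule. Since any $\delta$-subgroup of a vector group is automatically a $C$-vector space, the whole content is to upgrade this to closure under multiplication by all of $\k$. First I would replace $V$ by the Zariski closure $\overline{A}$, which is a $G$-submodule, hence again completely reducible, and purely non-constant (any submodule of $\overline{A}$ is a submodule of $V$, thus non-constant); thus I may assume $A$ is Zariski dense in $V$ and it suffices to show $A=V$. Next, using the almost direct product decomposition of $G^\circ$ from Theorem~\ref{thm:decompalmostdirectprodreductive}, I fix a maximal $\delta$-torus $T\subset G^\circ$ and decompose $V=\bigoplus_\lambda V^\lambda$ into $T$-weight spaces. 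The point allowing one to grade $A$ is that the constant points of $T$ are Zariski dense in $\overline{T}$ and every algebraic character sends them into $C^\times$; choosing finitely many such points separating the weights, the associated eigenspace projections are polynomials in the $T$-action with coefficients in $C$, hence preserve the $C$-space $A$. Therefore $A=\bigoplus_\lambda\bigl(A\cap V^\lambda\bigr)$, with each $A\cap V^\lambda$ Zariski dense in $V^\lambda$.

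The heart of the argument is the following lemma, which I would prove first (this is the torus case): if a $\delta$-subgroup $B$ of a vector group $\k^d$ is invariant under the homothety $m_s$ for some $s\in\k$ transcendental over $C$, then $B$ is a $\k$-subspace. Indeed, for $v\in B$ all powers $s^k v$ lie in $B$, so $C[s]\cdot v\subset B$; intersecting with the line $\k v$ gives a $\delta$-subgroup of $\Ga$ that is infinite-dimensional over $C$, and since a proper $\delta$-subgroup of $\Ga$ is the solution space of a nonzero linear operator and hence finite-dimensional over $C$, one gets $\k v\subset B$. To apply this to a weight space $V^\lambda$ with $\lambda$ non-constant, I need an element $t\in T$ with $\lambda(t)$ transcendental over $C$: this is where non-constancy enters, since in characteristic zero every element of $\k$ algebraic over $C$ is itself a $\delta$-constant (differentiate its minimal polynomial), so $C$ is algebraically closed in $\k$; thus $\lambda(T)\not\subset C^\times$ forces some $\lambda(t)$ to be transcendental. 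As $t$ acts on $V^\lambda$ by the scalar $\lambda(t)$, the lemma yields that $A\cap V^\lambda$ is a $\k$-subspace, and by density $A\cap V^\lambda=V^\lambda$.

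It remains to fill in the \emph{constant} weight spaces, i.e. those $\lambda$ with $\lambda(T)\subset C^\times$; these are invisible to $T$ and require the non-abelian structure together with the purely non-constant hypothesis. Writing a simple constituent as a tensor product of simple factors for $G_0$ and the $\delta$-quasi-simple components $G_1,\dots,G_s$ (Proposition~\ref{prop:tensorprod}), its non-constancy is witnessed by a single factor. If this factor is the $\delta$-torus $G_0$, its character is a non-constant scalar on the whole constituent, so \emph{every} weight there is non-constant and there is nothing left to do. Otherwise the witnessing factor is a full quasi-simple component $G_j=\overline{G_j}$ (this is exactly the alternative established in the proof of Corollary~\ref{cor:diag}), which acts through an algebraic representation with genuine root subgroups $\{u_\alpha(c)\}\subset G_j\subset G$. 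For such a constituent a constant weight $\lambda$ satisfies $\lambda|_{T_j}=0$, so it is never the highest weight (highest weights of nontrivial simple modules are non-constant); hence $V^\lambda\subset\sum_\alpha F_\alpha V^{\lambda+\alpha}$ with each $\lambda+\alpha$ non-constant. Since $u_\alpha(c)$ preserves $A$ and $A$ is graded, extracting the $\lambda$-component of $u_\alpha(c)w$ for $w\in V^{\lambda+\alpha}=A\cap V^{\lambda+\alpha}$ by interpolating over $c\in C$ shows $F_\alpha w\in A\cap V^\lambda$, whence $V^\lambda\subset A$. Summing over all weights gives $A=V$, as desired.

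I expect the main obstacle to be precisely the treatment of the constant (in particular zero) weight spaces. A maximal torus cannot detect them, so the proof must pass through the root subgroups of the genuinely differential quasi-simple components, and must use the purely non-constant hypothesis in an essential way: to guarantee that no constant weight is a highest weight and, equivalently, that a vector fixed by all quasi-simple factors and scaled by a constant character cannot occur, since it would generate a forbidden constant submodule. Two supporting technical points that need care are the grading of the merely $C$-linear group $A$ through the constant points of $T$, and the extraction of the operators $F_\alpha w$ inside $A$ by $C$-coefficient interpolation; both hinge on the fact that constant points and constant parameters keep all interpolation coefficients inside $C$, so that the a priori weak $C$-linearity of $A$ is enough.
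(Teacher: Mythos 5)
Your route is sound but genuinely different from the paper's, and noticeably heavier. The paper argues locally: by Ritt--Noetherianity it picks a \emph{minimal} non-zero $G$-invariant $\delta$-subgroup $B\subset A$, uses the $\delta$-torus of Corollary~\ref{cor:diag} to produce a single non-constant $x\in\k$ with $xB=B$, upgrades this to $\k B=B$ because $S=\{a\in\k\:|\:aB\subset B\}$ is a $\delta$-closed $C$-subalgebra of $\k$, hence equals $C$ or $\k$ by \cite[Theorem~II.6.3, page~97]{Kol}, and then concludes by induction on $\dim V$, splitting $B$ off by semisimplicity. This bypasses entirely the constant weight spaces that dominate your third paragraph: no root subgroups, no highest-weight combinatorics, no interpolation. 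Your globalization (grade $A$ itself, then fill in every weight space) is correct in outline; your Zariski-closure reduction is a clean substitute for the paper's induction, and your homothety lemma --- non-constant elements are transcendental over $C$ in characteristic zero, and proper $\delta$-subgroups of $\Ga$ are finite-dimensional over $C$ by Cassidy's linearity theorem \cite[Proposition~11]{cassdiffgr} --- is a nice elementary replacement for the appeal to Kolchin. What the paper's minimality trick buys is precisely that only \emph{one} non-constant weight vector in $\k B$ is ever needed, so the constant (in particular zero) weight spaces never have to be touched; what your version buys is a stronger intermediate statement ($A$ itself is graded and exhausted weight by weight) without the induction.

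Two steps of yours need more care than you give them. First, the decomposition $V=\bigoplus_\lambda V^\lambda$ is not automatic: differential representations of $\delta$-tori need not be semisimple (e.g.\ $x\mapsto\begin{pmatrix}1&\delta(x)/x\\0&1\end{pmatrix}$ is a non-diagonalizable representation of $\Gm$), so you must argue, as the paper does via Corollary~\ref{cor:diag}, Proposition~\ref{prop:tensorprod}, and \cite[Theorem~3.3]{diffreductive}, that the quasi-simple components act polynomially (hence their maximal tori diagonalize) and the central $\delta$-torus acts on each simple constituent by a scalar character, with all weights \emph{algebraic}. Algebraicity is what makes your $T_C$-grading agree with the full $T$-grading (monomial characters are determined by their restriction to constant points, as the paper notes), and you use this tacitly when you let a single non-constant $t\in T$ act by a scalar on a graded piece of $A$; if two constituents carried distinct weights agreeing on $T_C$, the piece would mix eigenvalues, and since the eigenprojections of $t$ have non-constant coefficients they would not preserve the merely $C$-linear $A$, so the homothety lemma would not apply. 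Second, you invoke the almost direct product structure of $G^\circ$ while $A$ is only assumed $G$-invariant; you need $V$ to remain completely reducible and purely non-constant as a $G^\circ$-module, which is exactly where the paper uses Propositions~\ref{prop:cr} and~\ref{prop:connectedcompconjconst}. Both points are repairable with results already in the paper, so I would classify them as missing citations rather than fatal gaps; with them supplied, your constant-weight argument (a constant weight restricts trivially to the torus of the witnessing algebraic component, hence is not a highest weight, hence lies in the image of lowering operators, which are extracted inside $A$ from $u_{-\alpha}(c)$, $c\in C$, by Vandermonde interpolation with constant coefficients) does go through.
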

\begin{proof}
We need only to show that $A$ is $\k$-invariant. Let us assume that $G$ is connected. The general case will follow by Propositions~\ref{prop:cr} and~\ref{prop:connectedcompconjconst}, which imply that $V$ is completely reducible and purely non-constant as a $G^\circ$-module.

Let us prove that $A$ is $\k$-invariant by induction on $\dim V$. Let $B$ be minimal among the non-zero $G$-invariant $\delta$-subgroups of $V$ that are contained in $A$, which exists by the Ritt--Noetherianity of the Kolchin topology. In what follows, we shall prove that $\k B = B$. Assuming this, by the semisimplicity of $V$, let $W \subset V$ be a $G$-invariant $\k$-subspace such that $V = B\oplus W$. Then $A = B\oplus (W\cap A)$, and $\k (W\cap A)= W\cap A$ by the inductive hypothesis. Therefore, $\k A = A$.

Let us show that there exists $x\in \k\setminus{C}$ such that $xB=B$. 
Since $V$ is purely non-constant, $V'=\k B$ is purely non-constant, and so it contains a simple non-constant submodule $U$. By Corollary~\ref{cor:diag}, there exists a $\delta$-torus $T\subset G$ such that $U$ semisimple and non-constant as a $T$-module. By the construction of $T$ (see the proof of Corollary~\ref{cor:diag}) and Proposition~\ref{prop:tensorprod}, every simple $G$-module is semisimple as a $T$-module. Therefore, $V$ and $V'$ are semisimple as $T$-modules. Hence, $\overline{T}$ is an algebraic torus, and there is a direct sum of weight spaces
\begin{equation}\label{eq:weightsum}
V'=\bigoplus_{\chi} V'_\chi
\end{equation}
over all algebraic characters $\chi:\overline{T}\to\k^\times$. By definition,
$$
V'_\chi=\big\{v\in V'\:|\:\ t(v)=\chi(t)v\ \ \text{for all } t\in \overline{T}\big\}.
$$
Note that $V'_\chi$, viewed as $C$-linear spaces, are weight spaces with respect to $\overline{T}(C)=T_C$. Since any character $\chi$ (being defined by monomials) is uniquely determined by its restriction to $\overline{T}(C)$, the direct sum~\eqref{eq:weightsum} is also the weight space decomposition of the $C$-space $V'$ with respect to the action of $T_C$.  Since$T_C\subset T\subset G$ and the $\delta$-subgroup $B\subset V'$ is $G$-invariant, $B$ is also $T_C$-invariant. Moreover, $B$ is a $C$-vector space~\cite[Proposition~11]{cassdiffgr}. Therefore, we have the weight decomposition of the $C$-space with respect to the action of $T_C$:
$$
B=\bigoplus_{\chi} B_\chi,\qquad\text{where}\qquad B_\chi=\left(B\cap V'_\chi\right).
$$
Since $V'=\k B$, $V'_\chi=\k B_\chi$. In particular, $B_\chi$ is non-zero if $V'_\chi$ is. By the definition of $T$, there is a character $\chi$ of $\overline{T}$ such that $\chi(T)\not\subset C$ and $V'_\chi\neq\{0\}$. Therefore, there exist $b\in B_\chi$, $b\neq 0$, and $t\in T$ such that $t$ acts on $b$ by multiplication by a non-constant element $x$. We fix such an~$x$. Due to the $G$-invariance of $xB$, we obtain that $B\cap xB$ is a $G$-invariant non-trivial $\delta$-subgroup of $B$. Since $B$ is minimal, $xB=B$.

On the one hand, the set $S = \{a \in \k\:|\: aB\subset B\}$ is a $C$-subalgebra of $\k$. 
On the other hand,
 $$S = \bigcap_{b\in B}\varphi_b^{-1}(B),\quad \varphi_b:\k\to V,\quad t\mapsto tb\,,$$
is a $\delta$-subgroup of $\k$. Therefore, by~\cite[Theorem II.6.3, page~97]{Kol}, $S=C$ or $\k$. Since $x\in S$, $S=\k$.
\qed\end{proof}

\begin{proof}[Proof of Theorem \ref{thm:purelynonconstantuniprad}]
By Theorem \ref{thm:redparamradunip},  $R_u(\Galdelta(\cU))$ is a $\delta$-closed subgroup $W$ of 
$\o(\cL)$ which is stable under the action of $\Galdelta(\cL)$. Proposition \ref{prop:unipradvectorgroup} shows that $W$ is 
a $\k$-vector space and thereby a $\Galdelta(\cL)$-module. By $\delta$-Tannakian equivalence for the category $\{\cL\}^{\otimes,\delta}$, we obtain that  $W$ is of the form $\o(\cW)$ for some
$\p$-submodule $\cW\subset\cL\subset\cU$. Thus, it remains to prove that $\cW$ is the smallest $\p$-submodule $\widetilde{\cL_0}$ of $\cL$ such that the parameterized differential Galois group of $\cU/\widetilde{\cL_0}$ is reductive. 

Let us show that the set  $\bold{V}$ of subobjects  $\mathcal{W}$ of $\mathcal{L}$ such that
 $R_u(\Galdelta(\mathcal{U}/\mathcal{W}))=\{1\}$  admits a smallest subobject with respect to the inclusion. It is enough to prove that, if $\mathcal{V}_1$ and $\mathcal{V}_2$ belong to $\bold{V}$, their intersection $\mathcal{W}$ lies in $\bold{V}$. Denote by $G$, $G_1$, and $G_2$ the parameterized differential Galois groups of $\mathcal{U}/\mathcal{W}$, $\mathcal{U}/\mathcal{V}_1$, and $\mathcal{U}/\mathcal{V}_2$, respectively. The quotient maps $\mathcal{U}/\mathcal{W}\to\mathcal{U}/\mathcal{V}_i$ give rise to homomorphisms $\varphi_i:G\to G_i$, $i=1,2$. Since $G_i$ are reductive, $R_u(G)\subset \ker\varphi_i$. Therefore, it suffices to show that $\ker\varphi_1\cap \ker\varphi_2=\{1\}$. For each $g\in G$, the condition $g\in \ker\varphi_i$ means that $g(u)-u\in \o(\cV_i)$ for all $u\in \o(\cU)$. Therefore, every element of $\ker\varphi_1\cap \ker\varphi_2$ acts trivially on $\o(\cU)/\o(\cW)$. 

As in the notation of Lemma \ref{lemma:unipcocycle}, let $s$ be a $\k$-linear section of the last arrow of the following exact sequence
$$
0\rightarrow \o(\cL) \rightarrow \o(\cU) \rightarrow \k \rightarrow 0
$$
and let $\zeta_\cU$ be its associated cocycle. By  Lemma \ref{lemma:unipcocycle} and Proposition \ref{prop:unipradvectorgroup},
the cocycle $\zeta_\cU$ identifies $R_u(\Galdelta(\cU))$ with  a $\k$-vector subgroup $W=\o(\cW)$ of $\o(\cL)$ for some
$\p$-submodule $\cW\subset\cU$. To conclude the proof, we have to show that $W=\o(\widetilde\cL_0)$.

It follows from the definition of $\zeta$ that the  diagram 
\begin{equation}\label{eq:diag1}
\txymatrix{
\Galdelta(\cU) \ar[d]^\varrho \ar[r]^{\zeta_{\cU}} & \o(\cL) \ar[d]^{\beta}\\
\Galdelta(\cU/\cW) \ar[r]^{\zeta_{\cU/\cW}} & \o(\cL/\cW)}\
\end{equation}
where the vertical arrows are induced by the quotient maps, is commutative. By the definition of $\cW$ and exactness of~$\o$, the composition $\beta\zeta_{\cU}$ vanishes on~$R_u(\Galdelta(\cU))$. Since $\o(\cU/\cW)$ is a faithful $\Galdelta(\cU/\cW)$-module and $\o(\cL/\cW)$
has no non-zero trivial $\Galdelta(\cL/\cW)$-submodule by assumption, and therefore no  non-zero trivial  $\Galdelta(\cU/\cW)$-submodules by assumption,
Propositions~\ref{prop:aux} and~\ref{rem:aux} below show that $$R_u(\Galdelta(\cU/\cW))=\rho(R_u(\Galdelta(\cU)))\,.$$ Since $\zeta$ is one-to-one on the unipotent radical, we conclude that the linear differential algebraic group $\Galdelta(\cU/\cW)$ is reductive. Therefore, $\cW\supset\widetilde{\cL_0}$. If we replace $\cW$ with a $\p$-submodule $\cV\subset\cU$ in the above diagram such that $\Galdelta(\cU/\cV)$ is reductive, we obtain that $$\o(\cV)\supset\zeta_{\cU}(R_u(\Galdelta(\cU)))=W\,.$$ Thus, $\o(\widetilde\cL_0)\supset W$. \qed\end{proof}

Recall that unipotent linear differential algebraic groups are connected. (Otherwise they would have unipotent finite quotients, which is impossible.) Therefore, for every linear differential algebraic group $G$, we have  $R_u(G)=R_u(G^\circ)=R_u(G)^\circ$.

\begin{proposition}\label{prop:aux}
Let $\varrho: G\to H$ be  a surjective homomorphism of linear differential algebraic groups. Assume  that, for every proper subgroup   $N\subset R_u(H)$ that is normal in $H$, the group $ R_u(H/N)$ is not central in $(H/N)^\circ=H^\circ /N$. Then $\varrho(R_u(G))=R_u(H)$. 
\end{proposition}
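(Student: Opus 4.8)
The plan is to prove the two inclusions separately, with the easy one first. For $\varrho(R_u(G))\subset R_u(H)$, I would observe that $R_u(G)$ is normal in $G$, so its image $N:=\varrho(R_u(G))$ is normal in $\varrho(G)=H$; moreover $N$ is unipotent, being the image of a unipotent group under a $\delta$-polynomial homomorphism \cite[Theorem~1]{Cassunip}. A normal unipotent $\delta$-subgroup is contained in the unipotent radical by maximality, so $N\subset R_u(H)$. The whole content is therefore the reverse inclusion $R_u(H)\subset N$, which I would prove by contradiction, exploiting the hypothesis on centrality.

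So I would assume that $N\subsetneq R_u(H)$ is proper and aim to contradict the assumption that $R_u(H/N)$ is not central in $(H/N)^\circ$. Two preliminary reductions set the stage. First, since an extension of a unipotent group by a unipotent group is again unipotent (concatenate the descending normal series provided by the definition of unipotence), the preimage in $H$ of any normal unipotent $\delta$-subgroup of $H/N$ is normal and unipotent, hence lies in $R_u(H)$; this yields $R_u(H/N)=R_u(H)/N$, which is nontrivial precisely because $N$ is proper. The same extension fact, applied to $G$, shows that $G/R_u(G)$ is reductive. Second, because $R_u(G)$ is contained in the kernel of the composite $G\xrightarrow{\varrho}H\to H/N$, this composite descends to a surjection $\psi:G/R_u(G)\to H/N$; thus $H/N$ is a quotient of the reductive group $G/R_u(G)$.

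It then remains to show that the unipotent radical of any quotient of a reductive linear differential algebraic group is central in the identity component. Passing to identity components (using $R_u(H/N)=R_u\big((H/N)^\circ\big)$ and $(H/N)^\circ=\psi\big((G/R_u(G))^\circ\big)$), I reduce to a surjection $\psi:M\to \bar H$ with $M$ connected reductive and $\bar H$ connected. Here I would invoke the structure theory of reductive $\delta$-groups \cite{diffreductive}: $M$ is the almost direct product of its central $\delta$-subgroup $Z=Z(M)^\circ$ and its derived $\delta$-subgroup $D=[M,M]$, with $D$ semisimple. Then $\psi(Z)$ is central in $\bar H$ and $\psi(D)=[\bar H,\bar H]$ is semisimple, hence reductive. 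The quotient $\bar H\big/\psi(Z)$ is a quotient of the semisimple group $\psi(D)$, so it is reductive, which forces the image of $R_u(\bar H)$ in $\bar H\big/\psi(Z)$ to be trivial. Consequently $R_u(\bar H)\subset\psi(Z)$, and since $\psi(Z)$ is central we conclude that $R_u(\bar H)$ is central in $\bar H$. This contradicts the hypothesis, so $N=R_u(H)$.

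The main obstacle I anticipate is exactly this last structural step. Unlike the algebraic case, reductivity of a $\delta$-group is not preserved under quotients: the Zariski-closure argument fails because ``$\overline{G}$ reductive'' is strictly stronger than $R_u(G)=\{e\}$ (\cf Example~\ref{exa:paramradunipalgradunip}), so one genuinely cannot conclude $R_u(H/N)=\{e\}$ and must settle for centrality. Obtaining centrality requires the center-times-derived decomposition of reductive $\delta$-groups together with the fact that quotients of semisimple $\delta$-groups remain reductive; these are the points where I would lean on \cite{diffreductive} rather than on anything elementary, and where care is needed to ensure the derived subgroup is genuinely semisimple and not merely reductive.
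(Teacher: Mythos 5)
Your proposal is correct and follows the paper's skeleton --- the same $N=\varrho(R_u(G))$, the same induced epimorphism $\psi\colon G/R_u(G)\to H/N$, and the same goal of showing $R_u(H/N)$ central in $(H/N)^\circ$ --- but you execute the structural step genuinely differently. The paper applies the almost-direct-product decomposition (Theorem~\ref{thm:decompalmostdirectprodreductive}) not to the whole source but to the normal reductive subgroup $\psi^{-1}\left(R_u(H/N)\right)^\circ\subset \left(G/R_u(G)\right)^\circ$, whose central part $Z$ sits in a central torus of the ambient group; the quasi-simple components then die immediately, since their images lie inside the unipotent (hence nilpotent) group $R_u(H/N)$ while they coincide with their own commutator subgroups, giving $R_u(H/N)=\psi(Z)$, central. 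You instead decompose the entire group $\left(G/R_u(G)\right)^\circ=Z\cdot D$ and trap $R_u\left((H/N)^\circ\right)$ inside $\psi(Z)$ via the lemma that quotients of semisimple $\delta$-groups are reductive. Two caveats on your route. First, that lemma is precisely the kind of statement that fails for general reductive $\delta$-groups (as you yourself note), so you should not merely cite \cite{diffreductive} for it but prove it; the proof is the paper's perfectness trick run through the normal-subgroup structure of an almost direct product: a normal $\delta$-subgroup meets each quasi-simple factor either fully --- and a perfect factor has trivial image in a unipotent radical --- or in the finite center, in which case, by connectedness of commutator maps into finite sets, the identity component of the preimage of $R_u$ centralizes every factor and is therefore trivial. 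Second, take $D$ to be the product of the quasi-simple components furnished by Theorem~\ref{thm:decompalmostdirectprodreductive} rather than $[M,M]$: Kolchin-closedness of derived subgroups is delicate, and you never actually need $\psi(D)=\left[(H/N)^\circ,(H/N)^\circ\right]$, since $(H/N)^\circ=\psi(Z)\psi(D)$ already exhibits $(H/N)^\circ/\psi(Z)$ as a quotient of $D$. What each approach buys: the paper's preimage trick needs only the elementary fact that perfect groups have no non-trivial unipotent images (plus the rigidity placing $Z$ in a central torus), while yours needs the stronger quotient-reductivity lemma but avoids analyzing the preimage of $R_u(H/N)$; your explicit verifications of $\varrho(R_u(G))\subset R_u(H)$ and of $R_u(H/N)=R_u(H)/N$ (the latter unnecessary for the contradiction) are left implicit in the paper, and both proofs share the reliance on the decomposition of a reductive $\delta$-group whose Zariski closure is not known to be reductive --- a point you flag more candidly than the paper does.
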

\begin{proof}
Let $N=\varrho(R_u(G))\subset R_u(H)$. By the surjectivity of $\varrho$, the group $N$ is normal in $H$.
Consider the epimorphism of quotients
$$
\nu:G/R_u(G)\to H/N
$$
induced by $\varrho$. The linear differential algebraic group $ \nu^{-1}(R_u(H/N))^\circ$ is normal in the reductive linear differential algebraic group $(G/R_u(G))^\circ$. Therefore, it is reductive itself. By Theorem~\ref{thm:decompalmostdirectprodreductive}, 
$ \nu^{-1}(R_u(H/N))^\circ$    is an almost direct product of a $\delta$-closed subgroup $Z$ of a central torus $T \subset (G/R_u(G))^\circ$ and of 
quasi-simple linear differential algebraic groups  $H_i$. Since the subgroups $H_i$ coincide with their commutator groups, they cannot have unipotent images unless $\nu(H_i)=\{e\}$. We conclude that $\nu(Z)=R_u(H/N)$. Since $Z$ is central in $(G/R_u(G))^\circ$ and $\nu$ is surjective, the group
$\nu(Z)$ is central in $(H/N)^\circ$. It follows from the  assumption that $N=R_u(H)$. 
\qed\end{proof}

\begin{proposition}\label{rem:aux}
The assumption on $H$ in Proposition~\ref{prop:aux} is satisfied if there exists a short exact sequence 
$$
0\to V\to U\to \bold{1}\to 0
$$
of $H^\circ$-modules, where $U$ is a faithful $H^\circ$-module  and $V$ is a  $H^\circ$-semisimple module with no non-zero trivial $H^\circ$-submodule.
\end{proposition}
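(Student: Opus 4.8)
The plan is to first turn the opaque group-theoretic hypothesis of Proposition~\ref{prop:aux} into one clean identity, and then to establish that identity by linearizing the unipotent radical and running the cocycle machinery already developed. Let $R_u:=R_u(H^\circ)=R_u(H)$. For any $N\subset R_u$ normal in $H$ we have $N\subset H^\circ$, so $(H/N)^\circ=H^\circ/N$, and $R_u/N$ is a normal unipotent subgroup of $H^\circ/N$ with reductive quotient $H^\circ/R_u$; hence $R_u(H/N)=R_u/N$. Therefore $R_u(H/N)$ is central in $(H/N)^\circ$ precisely when $[H^\circ,R_u]\subset N$. Since $[H^\circ,R_u]$ is itself normal in $H$ and contained in $R_u$, the assumption of Proposition~\ref{prop:aux} is \emph{equivalent} to the single identity
$$[H^\circ,R_u(H^\circ)]=R_u(H^\circ),$$
and it suffices to deduce this from the module hypothesis on $U$.

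Next I would linearize. Because $V$ is semisimple, the image of $R_u$ in $\GL(V)$ is a normal unipotent subgroup of the reductive group $\overline{H^\circ}|_V$, hence trivial by \cite[Theorem~1]{Cassunip} and reductivity; thus $R_u$ acts trivially on $V$ and on $\bold 1$. Choosing a $\k$-linear section of $0\to V\to U\to\bold 1\to 0$ and repeating the computations of Lemmas~\ref{lemma:unipcocycle} and~\ref{lemma:compact} verbatim, the restriction of $\zeta_U$ to $R_u$ is a homomorphism into $\Hom(\bold 1,V)\cong V$, injective by the faithfulness of $U$; so $R_u$ is abelian, and $\zeta_U$ intertwines conjugation with the module action, $\zeta_U([h,r])=(h-1)\cdot\zeta_U(r)$. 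Writing $\bar H$ for the connected reductive image of $H^\circ$ in $\GL(V)$ and $W:=\zeta_U(R_u)$, an $\bar H$-invariant $\delta$-subgroup of $V$, the identity to be proved becomes the \textbf{Key Lemma}: \emph{if $V$ is a semisimple $\bar H$-module with no nonzero trivial submodule, then every $\bar H$-invariant $\delta$-subgroup $W\subset V$ equals $W_1:=\langle(h-1)w\mid h\in\bar H,\ w\in W\rangle_\delta$}, i.e. $W$ has no nonzero trivial $\bar H$-quotient.

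I would prove the Key Lemma by induction on $\dim_\k V$, peeling off a simple summand $V=V^{(1)}\oplus V'$. With $W''=W\cap V^{(1)}$ and $W'$ the image of $W$ in $V'$, right-exactness of the coinvariants functor $M\mapsto M/\langle(h-1)m\rangle$ applied to $0\to W''\to W\to W'\to 0$ reduces the claim to $(W')_1=W'$ (inductive hypothesis, $V'$ has no trivial submodule) and $(W'')_1=W''$ (simple case). For simple non-trivial $V^{(1)}$ I split into two cases. If $V^{(1)}$ is non-constant it is purely non-constant, so Proposition~\ref{prop:unipradvectorgroup} forces $W''$ to be a submodule, hence $0$ or $V^{(1)}$, and $(W'')_1=W''$ in either case. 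If $V^{(1)}$ is conjugate to constants, I conjugate the representation into $\Gl(V^{(1)})(C)$ and set $V^C=C^{n}$; then $V^{(1)}=\k\otimes_C V^C$ is an \emph{isotypic} $\bar H$-module over $C$ whose isotype $V^C$ is absolutely simple, since $C=\k^\delta$ is algebraically closed and so $\End_{\bar H}(V^C)=C$ by Schur. Consequently the $\bar H$-invariant $C$-subspace $W''$ has the form $V^C\otimes_C E'$ for some $C$-subspace $E'\subset\k$; as the simple non-trivial module $V^C$ has vanishing coinvariants over $C$, the $C$-span of $\{(h-1)w''\}$ is already all of $W''$, whence $(W'')_1=W''$.

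The main obstacle is exactly this last (constant) case. When $\bar H$ is conjugate to constants an invariant $\delta$-subgroup need \emph{not} be a $\k$-submodule (the basic example being $C\subset\k$), so Proposition~\ref{prop:unipradvectorgroup} is unavailable and one cannot simply choose a $\k$-linear complement to argue as in the semisimple theory of modules; the hypothesis ``no trivial submodule'' is strictly weaker than ``purely non-constant.'' The device that resolves it is to descend to the $C$-form and invoke Schur's lemma over the algebraically closed field $C$: although $W$ is only a $C$-subspace, the isotypic description $W''=V^C\otimes_C E'$ shows it is ``constant in the multiplicity direction,'' which is enough to force its coinvariants to vanish and thereby to complete the reduction to $[H^\circ,R_u(H^\circ)]=R_u(H^\circ)$.
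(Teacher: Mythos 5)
Your proof is correct, and while it shares the linearization device with the paper, the decisive step is handled by a genuinely different argument. The paper also restricts the cocycle $g\mapsto gu-u$ of Lemmas~\ref{lemma:unipcocycle} and~\ref{lemma:compact} to $R_u(H)$ to get an injective $H$-equivariant embedding $\zeta$ into $V$; but it then argues by contradiction with an arbitrary proper normal $N\subset R_u(H)$: centrality of $R_u(H/N)$ gives $hgh^{-1}\in gN$, hence $H$ acts trivially on $\zeta(R_u(H))/\zeta(N)$, and the paper \emph{asserts} that $\zeta(R_u(H))$ is semisimple as an $H$-module over $C$, so the trivial quotient re-embeds as an $H$-fixed $C$-subspace of $V$, contradicting the hypothesis on $V$. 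You instead reformulate the hypothesis of Proposition~\ref{prop:aux} as the single identity $\overline{[H^\circ,R_u]}=R_u$ (your computation $R_u(H/N)=R_u(H)/N$ is correct, since the image of $R_u(H/N)$ in the reductive quotient $H/R_u(H)$ is normal and unipotent, hence trivial), and you prove it via a vanishing-coinvariants Key Lemma for arbitrary invariant $\delta$-subgroups of $V$, by induction on simple constituents with the constant/non-constant dichotomy: Proposition~\ref{prop:unipradvectorgroup} forces $\k$-linearity in the non-constant case, while in the constant case descent to the $C$-form and Schur's lemma over the algebraically closed field $C$ give the isotypic description $W''=V^C\otimes_C E'$, whose coinvariants visibly vanish. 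What each approach buys: the paper's route is shorter, but its semisimplicity-over-$C$ claim for $\zeta(R_u(H))$ is left unjustified, and your case analysis supplies precisely that missing content --- $C$-semisimplicity of a non-constant simple constituent is a delicate matter, which you sidestep by invoking Proposition~\ref{prop:unipradvectorgroup} there and using the honest $C$-direct-sum structure $V^{(1)}=\bigoplus_{b}b\otimes V^C$ only where it actually exists; your Key Lemma is also a reusable structural statement in its own right. Two small points deserve a line in a final write-up: the distinction between $\delta$-generated subgroups and spans is harmless because the generating sets $\{(h-1)w\}$ are stable under $\k$- (respectively $C$-) scaling, so the group they generate is already the span, hence Kolchin closed; and $\zeta(R_u)$ is Kolchin closed as the image of a homomorphism of linear differential algebraic groups, which is needed to apply the Key Lemma to it.
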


\begin{remark}
Note that if the $H^\circ$-module $V$ has no trivial $H^\circ$-submodules,
then $V$ has no  no zero  $C$-vector space fixed by the action of $H^\circ$.
Indeed, let $f$ be a nonzero element of a $C$-vector space fixed by $H^\circ$, then the $\k$-vector space spanned by $f$ is fixed by $H^\circ$.
\end{remark}
\begin{proof}
It suffices to prove the statement for connected $H$. Let $N\subset R_u(H)$ be a $\delta$-subgroup that is normal in $H$ and such that $R_u(H/N)$ is central in $H/N$. Since we have a commutative diagram
$$
\txymatrix{
H \ar[r] & H/N \\
R_u(H) \ar@{^{(}->}[u] \ar[r] & R_u(H/N), \ar@{^{(}->}[u]}\ $$
 the latter implies that, for all $g\in R_u(H)$, one has $hgh^{-1}\in gN$. Let $u\in U$ be an element whose image in $\bold{1}$ is non-zero. Moreover, $R_u(H)$
 acts trivially on $V$ because $V$ is $H$-semi-simple. Thus, the map 
$$
\zeta: R_u(H)\to V, \qquad g\mapsto gu-u
$$ 
is an $H$-equivariant one-to-one homomorphism of linear differential algebraic groups (see proofs of Lemmas~\ref{lemma:unipcocycle} and~\ref{lemma:compact}), that is, for all $h \in H$ and $g \in R_u(H)$, we have
$$
hgu-hu = hgh^{-1}u-u.
$$
The $\delta$-subgroups $\zeta(R_u(H))$ and $\zeta(N)$ of $V$ are thus stable under the action of $H$. Note that $\zeta(R_u(H))$ and $\zeta(N)$
are $C$-vector spaces since, as $\delta$-subgroup of $V$, they are zero sets linear homogeneous differential equations over $\k$.

 Let $n \in N$ be such that $hgh^{-1}=gn$ and $n' \in N$ be such that $gng^{-1}=n'$. Then
\begin{align*}
h(gu-u) &= hgu-hu = gnu-u =n'gu-u+n'u-n'u\\
&= n'(gu-u) +n'u-u=gu-u+n'u-u,
\end{align*}
since $gu-u \in V$ and $R_u(H)$ acts trivially on $V$.
Therefore, $H$ acts trivially on $\zeta(R_u(H))/\zeta(N)$.  Since $\zeta(R_u(H))$ is $H$-semisimple as $H$-module over $C$, the $H$-module  $$\zeta(R_u(H))/\zeta(N) \subset \zeta(R_u(H)) \subset V$$
is a $C$-vector space fixed  by the action of $H$. This contradicts the assumption on $V$. It follows that $R_u(H)=N$.\qed\end{proof}

 \subsubsection{A general algorithm}\label{sec:mergeconstantnonconstant}

Will will  explain a general algorithm to compute the unipotent radical of a $\p$-module extension $\cU$
of $\bold{1}$ by a completely reducible $\p$-module $\cL$. We recall that $\cL$ can be decomposed as the direct sum of a constant $\p$-module $\cL_c$
and a purely non-constant $\p$-module $\cL_{nc}$. Considering the pushouts of the extension $\cU$ with respect to the decomposition of $\cL$, 
 we find the following two exact sequences of $\p$-modules:
 $$   \txymatrix{
0 \ar[r] &\cL_c \ar[r] & \cU_c \ar[r] & \bold{1}
\ar[r] & 0},$$
and 
$$\txymatrix{
0 \ar[r] &\cL_{nc} \ar[r] & \cU_{nc} \ar[r] & \bold{1}
\ar[r] & 0}.$$
We assume that $K=\k(x)$ so that we can use  the algorithm contained in \cite{MinOvSingunip} to compute $R_u(\Galdelta(\cU_c))$ and the algorithm of Section~\ref{sec:purelynonconstantcase} to compute $R_u(\Galdelta(\cU_{nc}))$. The quotient map $\cU \rightarrow \cU/\cU_c=\cU_{nc}$ induces an epimorphism $\alpha: \Galdelta(\cU) \rightarrow \Galdelta(\cU_{nc})$.
Similarly, we find an epimorphism $\beta:  \Galdelta(\cU) \rightarrow \Galdelta(\cU_{c})$. The following theorem allows us to compare $R_u(\Galdelta(\cU))$  with 
the groups computed above.

\begin{theorem}\label{thm:decomp}
Let $K=\k(x)$,   $\cL, \cU,\cU_c,\cU_{nc}$ be as above. Assume that $\cL$ has no non-zero  trivial $\p$-submodule.
 Then the map $$\alpha\times\beta:R_u(\Galdelta(\cU))\to R_u(\Galdelta(\cU_{nc}))\times R_u(\Galdelta(\cU_c))$$ is an isomorphism of linear differential algebraic groups.
\end{theorem}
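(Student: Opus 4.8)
The plan is to move everything into the additive group $\o(\cL)$ using the cocycle description of the unipotent radical from Lemma~\ref{lemma:unipcocycle}, and to reduce the statement to a single direct-sum decomposition. First I would fix a $\k$-linear section $s$ of $0\to\o(\cL)\to\o(\cU)\to\k\to 0$ and use the sections it induces on the two pushout sequences for $\cU_{nc}$ and $\cU_c$; exactly as in~\eqref{eq:compacocycl}, this makes the cocycles compatible, giving $\zeta_{\cU_{nc}}\circ\alpha=p_{nc}\circ\zeta_\cU$ and $\zeta_{\cU_c}\circ\beta=p_c\circ\zeta_\cU$, where $p_c$ and $p_{nc}$ are the projections of the $\Galdelta(\cL)$-module $\o(\cL)=\o(\cL_c)\oplus\o(\cL_{nc})$ onto its summands. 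Setting $W=\zeta_\cU\big(R_u(\Galdelta(\cU))\big)\subset\o(\cL)$, the map $\alpha\times\beta$ is thereby identified with $w\mapsto\big(p_{nc}(w),p_c(w)\big)$ on $W$, and $W$ is abelian and $\Galdelta(\cL)$-stable by Lemmas~\ref{lemma:unipcocycle} and~\ref{lemma:compact} and Theorem~\ref{thm:redparamradunip}.

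Injectivity is then immediate, since $(p_{nc},p_c)\colon\o(\cL)\to\o(\cL_{nc})\oplus\o(\cL_c)$ is the canonical isomorphism and $\zeta_\cU$ is one-to-one on the unipotent radical. Next I would check that the two projections land onto the full factors, i.e. $p_{nc}(W)=R_u(\Galdelta(\cU_{nc}))$ and $p_c(W)=R_u(\Galdelta(\cU_c))$; equivalently, that $\alpha$ and $\beta$ map $R_u(\Galdelta(\cU))$ onto the unipotent radicals of their targets. This follows from Propositions~\ref{prop:aux} and~\ref{rem:aux} applied to the sequences $0\to\o(\cL_{nc})\to\o(\cU_{nc})\to\bold{1}\to 0$ and $0\to\o(\cL_c)\to\o(\cU_c)\to\bold{1}\to 0$: for $\cU_{nc}$ the hypothesis holds because $\cL_{nc}$ is purely non-constant and so has no non-zero trivial submodule, while for $\cU_c$ this is precisely where the standing assumption that $\cL$ has no non-zero trivial $\p$-submodule is used, as it guarantees the same for $\cL_c$.

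The heart of the proof, and the step I expect to be the main obstacle, is to upgrade these two factorwise surjections to surjectivity onto the whole product, i.e. to show $W=p_c(W)\oplus p_{nc}(W)$ inside $\o(\cL_c)\oplus\o(\cL_{nc})$. Since $W$ is abelian and surjects onto each factor, Goursat's lemma reduces this to the vanishing of the common quotient $Q:=p_{nc}(W)\big/\big(W\cap\o(\cL_{nc})\big)\cong p_c(W)\big/\big(W\cap\o(\cL_c)\big)$. All the groups appearing are $\Galdelta(\cL)$-stable because $W$ is and the decomposition $\o(\cL)=\o(\cL_c)\oplus\o(\cL_{nc})$ is one of $\Galdelta(\cL)$-modules; moreover $W\cap\o(\cL_{nc})$ is an honest submodule by Proposition~\ref{prop:unipradvectorgroup} and $p_{nc}(W)=R_u(\Galdelta(\cU_{nc}))$ is a submodule by Theorem~\ref{thm:purelynonconstantuniprad}, so $Q$ is a genuine $\Galdelta(\cL)$-module and the Goursat isomorphism is equivariant.

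To finish I would exploit the constant/non-constant dichotomy governing the two descriptions of $Q$. Through the first isomorphism $Q$ is a subquotient of $\o(\cL_{nc})$, on which $\Galdelta(\cL)$ acts through $\Galdelta(\cL_{nc})$; since $\cL_{nc}$ is purely non-constant and completely reducible, every non-zero subquotient is non-constant. Through the second isomorphism $Q$ is a subquotient of $\o(\cL_c)$, on which $\Galdelta(\cL)$ acts through $\Galdelta(\cL_c)$, which is conjugate to constants by the construction of $\cL_c$; hence $Q$ is conjugate to constants. A non-zero $\Galdelta(\cL)$-module cannot be both, so $Q=0$, giving $W=p_c(W)\oplus p_{nc}(W)$. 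Combining this with injectivity and the factorwise surjectivity shows that $\alpha\times\beta$ is an isomorphism of linear differential algebraic groups onto $R_u(\Galdelta(\cU_{nc}))\times R_u(\Galdelta(\cU_c))$.
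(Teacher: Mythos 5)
You follow the paper exactly through the first half: the cocycle identification of $R_u(\Galdelta(\cU))$ with a $\Galdelta(\cL)$-stable $\delta$-subgroup $W\subset\o(\cL)=\o(\cL_c)\oplus\o(\cL_{nc})$, the injectivity of $\alpha\times\beta$ from the commuting diagram of cocycles, and the surjectivity of $\alpha$ and $\beta$ separately via Propositions~\ref{prop:aux} and~\ref{rem:aux}, using that $\cL_c$ and $\cL_{nc}$ inherit the no-trivial-submodule hypothesis. At that point the paper does something quite different from you: it works with the differential type $\tau$. By Theorem~\ref{thm:purelynonconstantuniprad}, $R_u(\Galdelta(\cU_{nc}))$ is a $\k$-vector group, hence strongly connected of type $1$, while $\tau\big(R_u(\Galdelta(\cU_c))\big)=0$ by \cite[Theorem~2.13]{MinOvSingunip}; comparing types of the strong identity component $R_0\subset R_u(\Galdelta(\cU))$ and its images gives $\alpha(R_0)=R_u(\Galdelta(\cU_{nc}))$, $\beta(R_0)=\{e\}$, and $\beta(\ker\alpha)=R_u(\Galdelta(\cU_c))$, whence surjectivity of $\alpha\times\beta$. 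Your Goursat ending is a genuinely different route, but its decisive step has a gap as written.

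The gap is the sentence asserting that, through the second Goursat isomorphism, $Q$ is a subquotient of $\o(\cL_c)$ and hence conjugate to constants. On the constant side, $p_c(W)=R_u(\Galdelta(\cU_c))$ and $W\cap\o(\cL_c)$ are in general only $C$-vector-space $\delta$-subgroups of $\o(\cL_c)$, cut out by linear $\delta$-equations of positive order --- this is precisely the phenomenon exhibited in the example of Section~\ref{sec:computation}, where the radical is $\big\{a \,:\, \partial^{n+1}a/\partial t^{n+1}=0\big\}$ --- and Proposition~\ref{prop:unipradvectorgroup} yields $\k$-linearity only on the purely non-constant side. So $Q\cong p_c(W)\big/\big(W\cap\o(\cL_c)\big)$ is \emph{not} a module subquotient of $\o(\cL_c)$, and no meaning of ``conjugate to constants'' for $Q$ has been established; the constant/non-constant dichotomy therefore does not yet produce a contradiction. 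The argument is repairable: equivariance of the Goursat isomorphism shows that the kernel of $\Galdelta(\cL)\to\Galdelta(\cL_c)$ acts trivially on $Q$, and since $Q\cong p_{nc}(W)\big/\big(W\cap\o(\cL_{nc})\big)$ is an honest finite-dimensional module, the differential representation on $Q$ factors through the reductive, constant group $\Galdelta(\cL_c)$. What you then still need, and must prove, is the lemma that every differential representation of a reductive linear differential algebraic group conjugate to constants is itself conjugate to constants (on constant points $\delta$-polynomial maps restrict to polynomial ones, so the representation extends to the reductive Zariski closure, which is defined over $C$ and all of whose representations are defined over $C$; Proposition~\ref{prop:connectedcompconjconst} reduces to the connected case). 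Reductivity is essential here: the lemma fails for constant unipotent groups, e.g.\ $(c_1,c_2)\mapsto I+(c_1+\lambda c_2)E$ with $\lambda\in\k\setminus C$ and $E$ the upper-triangular $2\times 2$ matrix unit is a faithful differential representation of $\Ga(C)\times\Ga(C)$ that is not conjugate to constants. With that lemma supplied, your Goursat route closes and gives a legitimate alternative to the paper's type-theoretic argument.
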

\begin{proof}
We will use  the notion of \emph{differential type} $\tau(G)$ of a linear differential algebraic group $G$ (see \cite[Section~2.1]{CassSingerJordan} and \cite[Definition~2.2]{MinOvSingunip}). 
Recall that, in the ordinary case, $\tau$ can only take the values $-1$, $0$, or $1$. We will also use the following result: 
\begin{lemma}[{\cite[Equation (1), page~195]{CassSingerJordan}}]\label{lem:inequtype}
Let   $G$  be a linear differential algebraic group and $H$ be  a normal differential algebraic subgroup of $G$. Then $\tau(G) = \max\{\tau(H),\tau(G/H)\}$ .\end{lemma}

Let us consider the commutative diagram:
\begin{equation}\label{eq:diag2}
\txymatrix{
R_u((\Galdelta(\cU_c)) \ar@{^{(}->}[d]& R_u((\Galdelta(\cU)) \ar@{^{(}->}[d] \ar[l]_{\beta} \ar[r]^{\alpha} & R_u((\Galdelta(\cU_{nc})) \ar@{^{(}->}[d]\\
\o(\cU_c)  & \o(\cU)=\o(\cU_c)\oplus \o(\cU_{nc}) \ar[l]\ar[r] & \o(\cU_{nc})
}\
\end{equation}
Here, the vertical arrows correspond to embedding (that is, a one-to-one homomorphism) via the associated cocycles (see~\eqref{eq:diag1}). The horizontal arrows of the lower row correspond to natural projections. Note that $R_u((\Galdelta(\cU_c))$, $R_u((\Galdelta(\cU))$, and $R_u((\Galdelta(\cU_{nc}))$ are all
abelian groups (see Theorem \ref{thm:structuregaloisgroupsemidirectproduct}). 
It follows from~\eqref{eq:diag2} that $\alpha\times\beta$ is an embedding. Then, by \cite[Corollary~2.4]{CassSingerJordan} and Lemma \ref{lem:inequtype}, 
\begin{align*}\tau\big(R_u(\Galdelta(\cU)\big) &\le \tau \big(R_u(\Galdelta(\cU_c)) \times R_u(\Galdelta(\cU_{nc}))\big) 
\\&= \max\big\{\tau\big(R_u(\Galdelta(\cU_c))\big),\tau\big(R_u(\Galdelta(\cU_{nc}))\big)\big\}\,.\end{align*} Since $\alpha$ and $\beta$ are surjective, we find that $$\tau\big(R_u(\Galdelta(\cU)\big) = \max\big\{\tau\big(R_u(\Galdelta(\cU_c))\big),\tau\big(R_u(\Galdelta(\cU_{nc}))\big)\big\}\,.$$  If  $R_u(\Galdelta(\cU_{nc}))\ne\{e\}$, it is  isomorphic to  a non-trivial  vector group over $\k$ and  its differential type is $1$ (see \cite[Example 2.9]{CassSingerJordan}).   Moreover, since the unipotent radicals considered above are $\delta$-closed subgroups of vector groups,  they are either algebraic groups and their differential type is $1$, or finite-dimensional $C$-vector spaces of differential type $0$. If $R_u(\Galdelta(\cU_{nc})=\{e\}$, we have nothing to prove. Thus, we  assume that $R_u(\Galdelta(\cU_{nc})\ne\{e\}$ and that   its differential type is $1$. By the discussion above, we can also assume that $$\tau(R_u(\Galdelta(\cU)))=1.$$
Since $\cL$ has no non-zero  trivial  $\p$-submodule, the same holds for $\cL_c$ and $\cL_{nc}$. 
By Propositions~\ref{prop:aux} and~\ref{rem:aux}, $\alpha$ and $\beta$ are surjective. Let $R_0\subset R_u(\Galdelta(\cU))$ stand for the strong identity component of $R_u(\Galdelta(\cU))$ (\cite[Definition~2.6]{CassSingerJordan}). 
  Since $R_u(\Galdelta(\cU_{nc}))$ is algebraic by Theorem \ref{thm:purelynonconstantuniprad}, it is strongly connected by \cite[Lemma~2.8 and Example~2.9]{CassSingerJordan}.  
 We have  $$\alpha(R_0)=R_u(\Galdelta(\cU_{nc}))$$ (Indeed, otherwise $\alpha(R_0) \subsetneq R_u(\Galdelta(\cU_{nc}))$. By definition of the strong
identity component, we find that  $$\tau\big(R_u(\Galdelta(\cU))/R_0\big) <1.$$ However,  
 $$\tau(R_u(\Galdelta(\cU_{nc}))/ \alpha(R_0))=1,$$ because $R_u(\Galdelta(\cU_{nc}))$ is strongly connected. Therefore, we have 
a  surjective map $$R_u(\Galdelta(\cU) )/R_0\to R_u(G_{nc})/\alpha(R_0)$$ from a linear differential algebraic group of differential type  smaller than $1$  onto a linear differential algebraic group of differential type $1$, which is impossible. 
 Therefore, the group product map $$R_0\times\ker\alpha\to R_u(\Galdelta(\cU)),\quad (r_0, x)\mapsto r_0x$$ is onto. To finish the proof, it suffices to show that $$\beta(\ker\alpha)=R_u(\Galdelta(\cU_c)).$$  
  If $\beta(R_0)\ne \{e\}$, it is strongly connected and $$\tau(\beta(R_0)) = \tau(R_0)=1.$$ Since $\tau\big(R_u(\Galdelta(\cU_{nc}))\big)=0$  (see \cite[Theorem~2.13]{MinOvSingunip}), we have $\beta(R_0)=\{e\}$ (by Lemma~\ref{lem:inequtype}). 
Thus, $$\beta(\ker \alpha)=R_u(\Galdelta(\cU_{nc})).\eqno
\qed$$
\end{proof}


\section{Criteria of hypertranscendance}\label{sec:criteria}
We start with a new result in the representation theory of quasi-simple and reductive linear differential algebraic groups, which we  further use for a hypertranscendence criterion.
\subsection{Extensions of the trivial representation}\label{sec:repsplit}

Let $(\k, \delta)$ be a $\delta$-closed field such that $\Char\k =0$ and let $C$ be its field of $\delta$-constants. Let  {$G\subset\GL_n(\k)$} be a connected linear differential algebraic group over $\k$.  We recall the definition of the Lie algebra of $G$, following \cite[Chapter 3]{cassdiffgr}. 
\begin{definition}
A $\k$-linear derivation $D$ of the field of fractions $\k\langle G\rangle$ of the $\delta$-coordinate ring $\k\{G\}$ of $G$  is called a \textit{differential derivation} if $D\circ \delta=\delta\circ D$.
\end{definition}

 In particular, every differential derivation is determined by its values on the matrix entries that differentially generate $\k\{G\}$ and, therefore, can be represented by an $n\times n$ matrix. The group $G$ acts by right translations on the set of differential derivations of $\k\langle G\rangle$. 
 
 \begin{definition}
 The set $\Lie G$ of invariant differential derivations, denoted also by $\ga$, is called the \emph{Lie algebra} of $G$. 
 \end{definition}
 
 This is a $C$-Lie subalgebra of the Lie algebra $\mathfrak{gl}_n(\k)=\Lie\Gl_n(\k)$ of all $n\times n$ matrices. Moreover, $\ga$ is also a $\delta$-subgroup of the additive group of $\mathfrak{gl}_n(\k)$. 
Every $\delta$-homomorphism of linear differential algebraic groups gives rise (by taking the differential) to a $C$-homomorphism of their Lie algebras.  We refer to~\cite[Chapter 3]{cassdiffgr} for the details. 

\begin{definition}
A \emph{$\ga$-module} (respectively, $C$-$\ga$-module) is a finite-dimensional $\k$-vector space (respectivelty, $C$-vector space, possibly infinite-dimensional) $V$ together with a $C$-Lie algebra homomorphism 
$\nu:\ga \rightarrow \mathfrak{gl}(V)$, where $\mathfrak{gl}(V)$ denotes the Lie algebra of $\k$-linear endomorphisms of $V$. 
\end{definition}

Every $G$-module $V$ is also a $\ga$-module, where $\nu=d\rho: \ga \rightarrow \mathfrak{gl}(V)$ is   the differential (see \cite[pages~928-929]{cassdiffgr}) of the homomorphism $\rho: G\to\Gl(V)$. (Formally, to agree with the above definitions, we assume that a basis of $V$ is chosen, hence we can identify $\Gl(V)$ and $\mathfrak{gl}(V)$ with $\Gl_n(\k)$ and $\mathfrak{gl}_n(\k)$, respectively.) The definitions of simple, semisimple, and other types of $\ga$-modules that we use here are analogues to those for $G$-modules.

It follows from \cite[Proposition 20]{cassdiffgr} that, if $G\subset\Gl_n(\k)$ is given by polynomial equations, then $\Lie G$ coincides with the Lie algebra of the group $G$ considered as an algebraic group. Moreover, for an arbitrary linear differential algebraic group $G\subset\Gl_n(\k)$, the Lie algebra $\Lie\overline{G}$ of its Zariski closure $\overline{G}$ coincides with the $\k$-span of $\Lie G$ in $\mathfrak{gl}_n(\k)$. Recall that, in the case of $G = \overline{G}$, $\Lie G$ is a $G$-module, which is called \emph{adjoint}, where the action of $G$ is induced from its action on $\mathfrak{gl}_n(\k)$ by conjugation. The differential of the corresponding homomorphism $\Ad: G\to\Gl(\ga)$ gives the $\k$-Lie algebra map $\mathrm{ad}: \ga\to\mathfrak{gl}(\ga)$ defining the structure of the $\ga$-module on $\ga$, also called \emph{adjoint}.  One has $(\mathrm{ad} x)(y)=[x,y]$ for all $x,y\in\ga$.

For any group, Lie algebra, or ring $R$, we denote the set of $R$-module homomorphisms by $\Hom_R(V,W)$.

For a $C$-Lie algebra $\ga$, let $\ga_\k=\k\otimes_C\ga$ denote the $\k$-Lie algebra with the bracket  determined by
$$
[x\otimes\xi,y\otimes\eta]=xy\otimes[\xi,\eta] \qquad\forall x,y\in\k,\quad\xi,\eta\in\ga.
$$
We have the inclusion $$\ga\simeq C\otimes\ga\subset\k\otimes\ga=\ga_\k.$$
If $\ga \subset \ha$ are Lie algebras, then we also consider $\ha$ as a $\ga$-module under the adjoint action.

\begin{lemma}\label{lem:LieAlg}
Let $H\subset\Gl_n(C)$ be a reductive algebraic group and $\ha=\Lie H\subset\mathfrak{gl}_n(C)$. Let  $\ga\subset\ha_k$ be a $C$-Lie subalgebra containing $\ha$ and
\begin{equation}\label{eq:exseq0}
0\to V\to W\to\bold{1}\to 0
\end{equation}
an exact sequence of $\ga$-modules (over $\k$). 
If
\begin{enumerate}
\item[(1)] sequence~\eqref{eq:exseq0} splits as a sequence of $\ha$-modules and
\item[(2)] $\Hom_{\ha_\k}(\ha_\k, V)=0$ (in other words, $V$ does not contain quotients of the adjoint representation of $\ha_\k$),
\end{enumerate}
then sequence~\eqref{eq:exseq0} splits.
\end{lemma}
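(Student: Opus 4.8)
The plan is to translate the statement into the language of Lie algebra cocycles, exactly as in the proofs of Lemmas~\ref{lemma:unipcocycle} and~\ref{lemma:compact}. First I would use hypothesis~(1): an $\ha$-module splitting of~\eqref{eq:exseq0} furnishes a preimage $w\in W$ of $1\in\bold{1}$ with $x\cdot w=0$ for all $x\in\ha$. Setting $\phi(y)=y\cdot w$ for $y\in\ga$ defines a map $\phi\colon\ga\to V$ (indeed $y\cdot w\in V$, since $W\to\bold{1}$ is $\ga$-equivariant and $\bold{1}$ is trivial), namely the cocycle of the extension: $\phi([x,y])=x\cdot\phi(y)-y\cdot\phi(x)$. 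The sequence splits over $\ga$ if and only if this cocycle is a coboundary, and because $\phi$ already vanishes on $\ha$ by our choice of $w$, it suffices to prove $\phi=0$; then $w$ is $\ga$-invariant and $\lambda\mapsto\lambda w$ splits~\eqref{eq:exseq0}. Evaluating the cocycle identity with $x\in\ha$ and using $\phi(x)=0$ gives $\phi([x,y])=x\cdot\phi(y)$, i.e. $\phi$ is a homomorphism of $\ha$-modules from $\ga$ (with the adjoint action) to $V$; as $\phi|_\ha=0$, it descends to $\bar\phi\in\Hom_\ha(\ga/\ha,V)$.

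The second step is to reinterpret hypothesis~(2) at the level of $\ha$. Since each $x\in\ha$ acts $\k$-linearly on $V$, the $\ha$-action extends uniquely to a $\k$-Lie algebra action of $\ha_\k=\k\otimes_C\ha$ on $V$ (by $a\otimes x\mapsto a\cdot\nu(x)$), and this is the module structure meant in~(2). I would then establish the natural isomorphism $\Hom_{\ha_\k}(\ha_\k,V)\cong\Hom_\ha(\ha,V)$: restrict a $\k$-linear $\ha_\k$-map to $1\otimes\ha$, and conversely extend an $\ha$-map $\psi$ $\k$-linearly by $a\otimes x\mapsto a\psi(x)$; a short check shows each lands in the indicated $\Hom$ and that the two constructions are mutually inverse. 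Thus hypothesis~(2) is equivalent to $\Hom_\ha(\ha,V)=0$, i.e. no simple constituent of the adjoint $\ha$-module embeds into $V$ (if $\ha$ has a centre, this includes the trivial module).

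Structural input then finishes the argument. Because $\ha$ is reductive and $\Char C=0$, the adjoint $\ha$-module is semisimple (trivial on the centre, a sum of nontrivial simples on $[\ha,\ha]$); hence, as $\ha$-modules, $\ha_\k$ is a direct sum of copies of the adjoint module, and so its quotient $\ha_\k/\ha\cong(\k/C)\otimes_C\ha$ is semisimple with the same simple constituents as the adjoint representation. From $\ha\subset\ga\subset\ha_\k$ we get that $\ga/\ha$ is an $\ha$-submodule of the semisimple module $\ha_\k/\ha$, hence itself semisimple with all simple constituents among those of the adjoint representation. If $\bar\phi\neq0$, it would be nonzero on some simple summand $S$ of $\ga/\ha$, and $S$ being simple this restriction would embed $S$ into $V$; since $S$ is a constituent of the adjoint module, this contradicts $\Hom_\ha(\ha,V)=0$. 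Therefore $\bar\phi=0$, so $\phi=0$, and the required $\ga$-splitting follows.

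The delicate point I expect to be the main obstacle is the interplay between the $C$-linear $\ga$-action and the $\k$-linear $\ha_\k$-action: one must be careful that hypothesis~(2) refers to the $\ha_\k$-structure on $V$ obtained by extending scalars from $\ha$, and carry out the comparison $\Hom_{\ha_\k}(\ha_\k,V)\cong\Hom_\ha(\ha,V)$ correctly. A second point to watch is that $V$ is \emph{not} assumed semisimple as an $\ha$-module, so the vanishing must be run on the semisimple \emph{source} $\ga/\ha$, whose constituents are controlled by the adjoint representation, rather than on $V$; this is precisely where hypothesis~(2), rephrased as $\Hom_\ha(\ha,V)=0$, enters.
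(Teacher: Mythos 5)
Your proof is correct and follows essentially the same route as the paper: both translate splitting into the vanishing of a cocycle $\varphi\colon\ga\to V$ normalized by hypothesis~(1) to vanish on $\ha$, deduce $\ha$-equivariance from the cocycle identity, use a $C$-basis of $\k$ to see that $\ha_\k$ (hence $\ga$, or your quotient $\ga/\ha$) is a semisimple $C$-$\ha$-module whose simple constituents are adjoint constituents, and then kill each constituent by extending $\k$-linearly to $\ha_\k$ and invoking hypothesis~(2). Your passage to $\bar\phi$ on $\ga/\ha$ and your explicit isomorphism $\Hom_{\ha_\k}(\ha_\k,V)\cong\Hom_\ha(\ha,V)$ are only cosmetic repackagings of the paper's argument, the latter making explicit what the paper states as ``$\mu$ extends to the $\k$-linear $\ha_\k$-equivariant map $\ha_\k\to V$.''
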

\begin{proof}
If one chooses a basis $\{e_1,\ldots,e_{n-1},e_n\}$ of $W$ such that $V=\Span\{e_1,\ldots,e_{n-1}\}$, then the matrix $\varrho(\xi)\in\mathfrak{gl}(W)$ corresponding to $\xi\in\ga$ can be written in the form
$$
\begin{pmatrix}
\alpha(\xi) & \varphi(\xi)\\ 0 & 0
\end{pmatrix},
$$
where $\alpha:\ga\to\mathfrak{gl}(V)$ determines the $\ga$-module structure on $V$ and $\varphi:\ga\to V$ is a $C$-linear map. The fact that $\varrho$ defines a homomorphism of Lie algebras is the 
following condition on $\varphi$:
\begin{equation}\label{eq:one}
\varphi\left([\xi,\eta]\right)=\alpha(\xi)\varphi(\eta)-\alpha(\eta)\varphi(\xi)\qquad\forall\xi,\eta\in\ga.
\end{equation}
Choosing another vector for $e_n$, one obtains another $C$-linear map $\varphi':\ga\to V$, which is called equivalent to $\varphi$. Sequence \eqref{eq:exseq0} splits if and only if $\varphi$ is equivalent to $0$.

Let us choose $e_n$ in such a way that  
\begin{equation}\label{eq:two}
\varphi(\xi)=0\qquad\forall\xi\in\ha,
\end{equation}
which is possible due to assumption~(1).
It follows from~\eqref{eq:one} and~\eqref{eq:two} that
\begin{equation}\label{eq:xieta}
\varphi\left([\xi,\eta]\right)=\alpha(\xi)\varphi(\eta)\qquad\forall\xi\in\ha,\quad\eta\in\ga.
\end{equation}
Since $H$ is reductive, by \cite[page~97, Theorem]{Waterhouse:IntrotoAffineGroupSchemes} and \cite[Chapter~2]{SpringerInv}, there exist simple $\ha$-submodules $\ha_1,\ldots,\ha_m$ in $\ha$ such that $\ha=\bigoplus\limits_{i=1}^m\ha_i$. Let $B \subset \k$ be a $C$-basis of $\k$ as a $C$-vector space. For each $a \in \k$ and $i$, $1\le i \le m$, $a\otimes \ha_i$ is a simple $C$-$\ha$-submodule of $\ha_\k$ and
\begin{equation}\label{eq:bhisum}
\ha_\k =\bigoplus_{1\le i\le m\atop{b \in B}} b\otimes\ha_i.
\end{equation}
For every $C$-$\ha$-submodule $I \subset \ha_\k$, let $I'$ be a maximal sum of the simple components in decomposition~\eqref{eq:bhisum} with $I' \cap I = \{0\}$. Such an $\ha$-submodule $I'$ exists by Zorn's lemma.  We will show that 
\begin{equation}\label{eq:hkissum}
 \ha_\k = I\oplus I'.
 \end{equation} Let  $S = b\otimes \ha_i$ for some $b \in B$ and $1\le i \le m$. If $S \cap \big(I \oplus I'\big) = \{0\}$, then  $I \cap \big(S\oplus I'\big) = \{0\}$. Indeed, if $v \in I$ and $v =v_1+v_2$, where $v_1 \in S$ and $v_2 \in I'$, then  $v_2 = v-v_1 \in S \cap \big(I \oplus I'\big)$, and so $v=v_1 \in I\cap S = \{0\}$. By the maximality of $I'$, $S \subset I'$, which contradicts $S \cap \big(I \oplus I'\big) = \{0\}$. Therefore, 
\begin{equation}\label{eq:Sisin}
S \cap \big(I \oplus I'\big) \ne \{0\}.
\end{equation}  
Since $S$ is a simple $\ha$-module,~\eqref{eq:Sisin} implies that $S \subset I\oplus I'$. 
Thus,~\eqref{eq:hkissum} holds and therefore  $\ha_\k$ is a semisimple $\ha$-module. (cf. \cite[\S 4.1]{Bourbaki8}).

The $C$-$\ha$-module $\ga$ is semisimple. Indeed, every $\ha$-invariant subspace $J\subset\ga$ has a complementary invariant subspace $J'$ in $\ha_k$, since $\ha_k$ is semisimple. Therefore, $$\ga=J\oplus \left(J'\cap\ga\right).$$ Thus, to prove that $\varphi$ is the zero map, it suffices to show that $\varphi(J)=\{0\}$ for every simple $C$-$\ha$-submodule $J\subset\ga$. Since such $J$ is isomorphic to $\ha_i$ for some $i$, $1\le i \le m$, we have the $\ha$-equivariant $C$-linear map 
$$
\mu:\ha\stackrel{\pi}{\to}\ha_i\simeq J\subset\ga\stackrel{\varphi}{\to} V,
$$
where $\pi$ is the projection with respect to an $\ha$-invariant decomposition $\ha=\ha_i\oplus\ha_i'$, and the $\ha$-equivariance of $\varphi$ is implied by~\eqref{eq:xieta} . Since $\mu$ extends to the $\k$-linear $\ha_k$-equivariant map $\ha_k\to V$, assumption~(2) yields that $\mu$ is the zero map. Therefore, $\varphi(J)=\{0\}$. 
\qed\end{proof}

\begin{lemma}\label{lem:45} Let $G$ be a connected linear differential algebraic group and $\ga$ be its Lie algebra. Any $G$-module $W$ is completely reducible if and only if it is completely reducible as a $\ga$-module.
\end{lemma}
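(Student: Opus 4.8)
The plan is to prove the sharper statement that the lattice of $G$-submodules of $W$ and the lattice of $\ga$-submodules of $W$ coincide, as collections of $\k$-subspaces of $W$. Complete reducibility of a module is entirely a property of its submodule lattice (a module is completely reducible iff every submodule is a direct summand, equivalently iff it is a sum of simple submodules), so once the two lattices are identified, the two complete-reducibility assertions become literally the same statement. This mirrors the strategy already used in the proof of Proposition~\ref{prop:cr}.

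First I would reduce to the image. Write $\rho\colon G\to\Gl(W)$ for the representation and $\nu=d\rho\colon\ga\to\gl(W)$ for its differential, so the $\ga$-module structure on $W$ is given by $\nu$. Put $H=\rho(G)\subset\Gl(W)$, which is a linear differential algebraic group by \cite[Proposition~7]{cassdiffgr} and is connected because $G$ is (the Kolchin-continuous image of a Kolchin-irreducible set is Kolchin-irreducible). A $\k$-subspace $U\subset W$ is $G$-invariant iff it is $H$-invariant, since $G$ and $H$ have the same image in $\Gl(W)$. Exactly as in the proof of Proposition~\ref{prop:cr}, the set of $g\in\Gl(W)$ preserving $U$ is a Zariski-closed subgroup, so it contains $H$ iff it contains the Zariski closure $\overline{H}$; hence $U$ is $H$-invariant iff it is $\overline{H}$-invariant. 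Here $\overline{H}$ is a connected algebraic group (a Kolchin-irreducible set is Zariski-irreducible, the Zariski topology being coarser) acting on $W$.

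The heart of the argument is the passage from $\overline{H}$ to its Lie algebra. Since $\Char\k=0$ and $\overline{H}$ is connected, a $\k$-subspace $U\subset W$ is $\overline{H}$-invariant iff it is $\Lie\overline{H}$-invariant: one direction is differentiation, and for the converse the stabilizer $\{g\in\overline{H}\colon gU\subset U\}$ is a closed subgroup whose Lie algebra contains $\Lie\overline{H}$, hence equals $\overline{H}$ (cf.\ \cite[Chapter~2]{SpringerInv}). By the description of the Lie algebra of a Zariski closure recalled just before the statement, $\Lie\overline{H}$ is the $\k$-span of $\Lie H$ in $\gl(W)$; and because $\rho\colon G\to H$ is a surjective $\delta$-homomorphism of connected groups, its differential is surjective, giving $\Lie H=\nu(\ga)$ (\cite[Chapter~3]{cassdiffgr}). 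Therefore $\Lie\overline{H}=\k\cdot\nu(\ga)$, and since $U$ is a $\k$-subspace, $U$ is $\Lie\overline{H}$-invariant iff it is $\nu(\ga)$-invariant, i.e.\ iff it is a $\ga$-submodule.

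Chaining these equivalences shows that a $\k$-subspace of $W$ is a $G$-submodule iff it is a $\ga$-submodule, as desired. The main obstacle is the single step $\Lie H=\nu(\ga)$ (equivalently, that the $\k$-span of $d\rho(\ga)$ exhausts $\Lie\overline{H}$): this is the one place where genuine differential-algebraic input is required, namely the surjectivity of the differential of a surjective homomorphism of connected linear differential algebraic groups together with the $\k$-span description of $\Lie\overline{H}$. The remaining steps are just the standard group-versus-Lie-algebra correspondence in characteristic zero and the differential-algebraic-versus-Zariski-closure dictionary already invoked for Proposition~\ref{prop:cr}.
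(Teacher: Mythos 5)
Your proposal is correct and follows essentially the same route as the paper's proof: pass to the image $G_W=\rho(G)$, then to its Zariski closure, use the characteristic-zero dictionary between a connected algebraic group and its Lie algebra, the fact that $\Lie\overline{G_W}$ is the $\k$-span of $\Lie G_W$, and Cassidy's result that $\Lie G_W$ is the image of $\ga$ under $d\rho$. The only (harmless) difference is that where the paper cites Waterhouse's theorem for the equivalence of semisimplicity over $\overline{G_W}$ and over its Lie algebra, you prove the slightly finer statement that the full submodule lattices coincide, via the standard connected-stabilizer argument.
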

\begin{proof} Let $G_W$ denote the image of $G$ in $\GL(W)$. The $G$-module $W$ is completely reducible if and only if it is completely reducible as a $G_W$-module. The latter is equivalent to $W$ being completely reducible as a $\overline{G_W}$-module. Since $\Char \k= 0$, this is equivalent to the semisimplicity of $W$ viewed as the $\Lie\overline{G_W}$-module (see \cite[page~97, Theorem]{Waterhouse:IntrotoAffineGroupSchemes}). Since $\Lie\overline{G_W}$ is the $\k$-span of $\Lie G_W\subset\mathfrak{gl}(W)$, $W$ is completely reducible as a $\Lie\overline{G_W}$ if and only if it is completely reducible as a $\Lie G_W$-module. Since, by \cite[Proposition~22]{cassdiffgr}, $\Lie G_W$ is an image of $\ga$ in $\gl(W)$, $W$ is completely reducible as a $\ga$-module if and only if $W$ is completely reducible as a $\Lie G_W$-module.
\qed\end{proof}

\begin{theorem}\label{thm:adjoint}
Let $G$ be a connected linear differential algebraic group over $\k$ and
\begin{equation}\label{eq:exseq}
0\to V\to W\to\bold{1}\to 0
\end{equation}
an exact sequence of $G$-modules, where $V$ is faithful and semisimple. Let $\overline{G}$ denote the Zariski closure of $G$ in $\Gl(V)$. 
If  $V$, viewed as a $\overline{G}$-module, does not contain non-zero submodules isomorphic to a quotient of the adjoint module for $\overline{G}$, that is, if
$$
\Hom_{\overline{G}}(\Lie\overline{G}, V)=0,
$$
then sequence~\eqref{eq:exseq} splits.
\end{theorem}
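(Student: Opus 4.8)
The plan is to descend the whole problem to Lie algebras and invoke Lemma~\ref{lem:LieAlg}. First, since $V$ is faithful and semisimple, Proposition~\ref{prop:cr} shows that $\overline G$ is reductive, and it is connected because $G$ is. Writing $\ga=\Lie G$ and using the cited fact that $\Lie\overline G$ is the $\k$-span of $\ga$, I would view~\eqref{eq:exseq} as an exact sequence of $\ga$-modules and aim to split it there; the connectedness of $G$ will then upgrade a $\ga$-splitting to a $G$-splitting. Indeed, a $\ga$-module complement $L\subset W$ to $V$ with $\ga L=0$ is automatically $G$-invariant, and the homomorphism $G\to\GL(L)\cong\k^\times$ has trivial differential, hence is trivial on the connected group $G$; so $L$ is a $G$-submodule and $W=V\oplus L$ as $G$-modules.

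The heart of the argument, and the main obstacle, is to produce the data required by Lemma~\ref{lem:LieAlg}: a reductive group $H\subset\GL(V)$ defined over $C$ with $\ha:=\Lie H\subset\mathfrak{gl}(V)(C)$ such that $\ha\subset\ga\subset\ha_\k=\Lie\overline G$. Here I would use Theorem~\ref{thm:decompalmostdirectprodreductive}: after conjugating by an element of $\GL(V)(\k)$ (which changes neither the hypotheses nor the conclusion), $\overline G$ becomes the base change to $\k$ of a reductive group $H$ over $C$ -- its non-commutative quasi-simple factors are already defined over $\Q$, and the central torus may be put in diagonal form over $C$. Taking $\ha=\Lie H$ gives $\ha_\k=\Lie\overline G$ at once. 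The inclusion $\ha\subset\ga$ is the delicate point: it amounts to checking that $\Lie G$ contains the constant points of each factor. For a quasi-simple factor this holds because $G_i$ is either $H_i$ itself or conjugate to $H_i(C)$, and for the central torus because any $\delta$-subgroup of $(\k^\times)^r$ contains $(C^\times)^r$, so $\Lie G$ contains $\Lie H(C)=\ha$.

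It then remains to verify the two hypotheses of Lemma~\ref{lem:LieAlg}. Condition~(2), $\Hom_{\ha_\k}(\ha_\k,V)=0$, is exactly the hypothesis of the theorem: since $\ha_\k=\Lie\overline G$ and $\overline G$ is connected in characteristic zero, $\overline G$-module maps between the rational modules $\Lie\overline G$ and $V$ coincide with $\Lie\overline G$-module maps, so $\Hom_{\overline G}(\Lie\overline G,V)=\Hom_{\ha_\k}(\ha_\k,V)$. For condition~(1), I first note that $V$ is semisimple as an $H$-module (restriction of the semisimple $\overline G$-module to the reductive group $H$ in characteristic zero), hence semisimple as an $\ha$-module. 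To split~\eqref{eq:exseq} over $\ha$ I distinguish two cases according to the decomposition $\ha=\mathfrak z\oplus[\ha,\ha]$. Over the semisimple part $[\ha,\ha]$ the sequence splits by Whitehead's lemma. If the center $\mathfrak z$ is trivial we are done; otherwise $\bold 1$ is a quotient of the adjoint module $\Lie\overline G$ (projection onto the center), so the theorem's hypothesis forces $\Hom_{\overline G}(\bold 1,V)=0$, i.e.\ $V$ has no trivial submodule, and a direct computation of $H^1(\mathfrak z,V)$ for a semisimple $\mathfrak z$-module without trivial summand gives vanishing. In either case~\eqref{eq:exseq} splits as a sequence of $\ha$-modules, Lemma~\ref{lem:LieAlg} applies, and the resulting $\ga$-splitting yields the desired $G$-splitting. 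The step I expect to require the most care is the simultaneous realization of $\overline G$ over $C$ together with the inclusion $\ha\subset\ga$, in particular the bookkeeping for the central torus.
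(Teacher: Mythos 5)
Your overall architecture is the same as the paper's: reduce everything to a splitting of $\ga$-modules, produce a sandwich $\ha\subset\ga\subset\ha_\k=\Lie\overline{G}$ with $\ha$ a reductive $C$-Lie algebra, and feed it to Lemma~\ref{lem:LieAlg}, identifying condition~(2) with the hypothesis exactly as the paper does. But both places where you deviate from the paper contain gaps. The first is the construction of $\ha$. The paper never conjugates $\overline{G}$ to a constant form inside $\Gl(V)$: it invokes \cite[Lemma~4.5]{diffreductive} to get an \emph{abstract} $\delta$-isomorphism $\nu:\wtilde{H}\to G$, where $\wtilde{H}\subset\Gl_r(\k)$ has Zariski dense constant points $H_C$, sets $H=\nu(H_C)$ and $\ha=\Lie H\subset\ga$, and then uses \cite[Theorem~3.3]{diffreductive} (every $\delta$-representation of $H_C$ is polynomial) both to extend $\nu$ to the Zariski closures, yielding $\Lie\overline{G}=\ha_\k$, and to get condition~(1) for free: $W$ is completely reducible as an $H$-module, so no cohomology is needed. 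Your substitute --- Theorem~\ref{thm:decompalmostdirectprodreductive} plus a single conjugation in $\Gl(V)$ --- does not deliver the sandwich: the element diagonalizing the central torus $H_0$ over $C$ will in general move the quasi-simple factors off their rational forms, and there is no a priori reason one element of $\Gl(V)$ handles both. The statement needed to repair this --- every representation of a reductive $C$-group on a $\k$-space is isomorphic to one defined over $C$ (here $C$ is algebraically closed) --- is essentially the content of the cited Theorem~3.3, which your sketch never brings in; this simultaneity problem, which you flag as ``bookkeeping,'' is the genuine gap, and the paper's detour through \cite{diffreductive} exists precisely to avoid it. (A smaller elision in the same step: for the non-full factors, $G_i$ is only \emph{conjugate} to $H_i(C)$ by some $h_i\in H_i$, so $\ha\subset\ga$ requires the further conjugation by $\prod h_i$ --- harmless, since the factors commute and the torus is central, but it must be said.)

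Second, your verification of condition~(1) rests on a false intermediate claim: $H^1(\mathfrak{z},V)$ need not vanish under the theorem's hypotheses. The projection-to-the-center argument correctly gives $V^{\overline{G}}=V^{\ha}=0$, but that does not force $V^{\mathfrak{z}}=0$. For instance, let $\overline{G}=\Gm\times\SL_2$ act on $V=(\mathbf{1}\otimes\mathrm{std})\oplus(\chi\otimes\mathrm{std})$ with $\chi(t)=t$: this $V$ is faithful and semisimple, and $\Lie\overline{G}\cong\mathbf{1}\oplus(\mathbf{1}\otimes\mathrm{ad})$ shares no irreducible constituent with $V$, so all hypotheses hold; yet $\mathfrak{z}$ kills the first summand, and every non-zero linear map $\mathfrak{z}\to V^{\mathfrak{z}}$ is a cocycle that is not a coboundary, so $H^1(\mathfrak{z},V)\neq 0$. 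Moreover, even granting both vanishing statements, juxtaposing an $\mathfrak{s}$-splitting and a $\mathfrak{z}$-splitting does not by itself produce an $\ha$-splitting, since the two complements need not match. The repair is standard and preserves your strategy: split over $\mathfrak{s}=[\ha,\ha]$ by Whitehead, note that $W^{\mathfrak{s}}$ is $\mathfrak{z}$-stable and sits in $0\to V^{\mathfrak{s}}\to W^{\mathfrak{s}}\to\mathbf{1}\to 0$, and split this over $\mathfrak{z}$ using $\big(V^{\mathfrak{s}}\big)^{\mathfrak{z}}=V^{\ha}=0$ (equivalently, Hochschild--Serre gives $H^1(\ha,V)\cong H^1\big(\mathfrak{z},V^{\mathfrak{s}}\big)=0$); the resulting line is then killed by all of $\ha$. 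Finally, your upgrade from a $\ga$-splitting to a $G$-splitting (``trivial differential of a character of a connected group implies the character is trivial'') is stated for differential algebraic groups, where it needs care; it is safest routed through $\overline{G}$ --- the complement $L$ is a $\k$-subspace annihilated by $\ga$, hence by its $\k$-span $\Lie\overline{G}$, hence stable under and fixed by the connected algebraic group $\overline{G}$ in characteristic zero --- or simply via Lemma~\ref{lem:45}, as the paper does.
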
 
\begin{proof}
By Lemma~\ref{lem:45}, it is sufficient to show that $W$ is completely reducible as a $\ga$-module.
Since $G$ admits a faithful completely reducible representation (given by $V$), it is reductive. Therefore,
by \cite[Lemma 4.5]{diffreductive}, there is a $\delta$-isomorphism $\nu: \wtilde{H}\to G$, where $\wtilde{H}\subset\Gl_r(\k)$ is a $\delta$-group such that its $\delta$-subgroup $H_C=\wtilde{H}\cap\Gl_r(C)$ is Zariski dense (the Zariski topology on $\wtilde{H}$ is induced from $\Gl_r(\k)$). 

Let $H=\nu(H_C)$ and $\ha=\Lie H$. We will show that $\ha$ and $\ga$ satisfy the hypotheses of Lemma~\ref{lem:LieAlg}, which would thus yield the proof (in particular, we will identify $\ga$ with a subalgebra of $\ha_\k$).
The differential algebraic group $H\simeq H_C$ is reductive. Indeed, if its unipotent radical were non-trivial, $\overline{R_u(H_C)}\cap \wtilde{H}$ would be a non-trivial normal unipotent differential algebraic subgroup of $\wtilde{H}$, which is impossible due to the reductivity of $G\simeq\wtilde{H}$.

Let us show that $\nu$ extends to an algebraic isomorphism $\overline\nu:\overline{H_C}\to\overline{G}$ of the Zariski closures. By \cite[Theorem 3.3]{diffreductive}, this would follow if the $G$-module $V$ is completely reducible and $\overline{H_C}$ is reductive. It only remains to prove the latter.  Since $H_C$ is reductive, $C^r$ is a completely reducible $H_C$-module. Therefore, $\k^r$ is completely reducible as an $\overline{H_C}$-module. Thus, $\overline{H_C}$ is reductive. 

The differential $d\overline\nu$ defines an isomorphism between $\k$-Lie algebras $\Lie\overline{H_C}$ and $\Lie\overline{G}$. Since $\Lie{H_C}\subset\mathfrak{gl}_r(C)$ and any $C$-basis of $\mathfrak{gl}_r(C)$ is also a $\k$-basis of $\mathfrak{gl}_r(\k)$, we obtain that any $C$-basis of $\Lie{H_C}$ is $\k$-linearly independent. Since $\Lie\overline{H_C}$ is the $\k$-span of $\Lie{H_C}$, we can therefore write $$\Lie\overline{H_C}=\k\otimes_C\Lie{H_C}.$$ Applying $d\overline\nu$, this implies that $$\Lie\overline{G}=\k\otimes_C\ha=\ha_\k.$$ Therefore, we have $$\ha\subset\ga\subset\ha_\k.$$
Since every $\delta$-representation of $H_C$ is polynomial and $\overline{H_C}$ is reductive, every $\delta$-representation of $H_C$ is completely reducible. Therefore, $W$ is completely reducible as an $H$-module (and $\ha$-module), and so sequence~\eqref{eq:exseq} splits as a sequence of $\ha$-modules. 
Finally, using \cite[page~97, Theorem]{Waterhouse:IntrotoAffineGroupSchemes} and $\Lie\overline{G} = \ga_\k$, we conclude that $$\Hom_{\ga_\k}(\ga_\k, V)=\Hom_{\Lie\overline{G}}\big(\Lie\overline{G}, V\big)=\Hom_{\overline{G}}\big(\Lie\overline{G}, V\big)=0.\eqno\qed$$ 
\end{proof}

\subsection{A practical criterion of hypertranscendance}\label{sec:criterion}
Let $\Delta=\{\p,\delta\}$ be a set of two derivations.
Let $K$ be a $\Delta$-field such that $K^\p=\k$ (recall that $\k$ is $\delta$-closed).  From the results of the previous sections, we obtain the 
following criterion for  the hypertranscendence of the solutions of $L(y)=b$, for irreducible $L \in K[\p]$.
\begin{theorem}\label{cor:criteria}	
Let $L \in K[\p]$
be an irreducible $\p$-operator such that  $\Gal(L)$  is a quasi-simple linear algebraic group. Denote $n= \ord L$ and $m = \dim\Gal(L)$.  Suppose that $m \ne n$. Let  $b \in K^*$ and $F$ a 
 $\Delta$-field extension of $K$ such that $F^\p=\k$ and $F$ contains $z$, a solution of $L(y)=b$, and $u_1,\dots,u_n$, $K$-linearly independent solutions of $L(y)=0$. Then
 \begin{itemize}
\item the functions $v_1,\ldots,v_m, z,\dots,\p^{n-1} z$ and all their derivatives
with respect to $\delta$ are algebraically independent over $K$, where $\{v_1,\ldots,v_m\} \subset \{u_1,\dots,\p^{n-1}u_1,\dots,u_n,\dots,\p^{n-1}u_{n}\}$ is a maximal algebraically independent over $K$ subset
\end{itemize}
if and only if
\begin{itemize}
\item the linear differential system
$
\p(B)-\delta(A_L)=A_LB-BA_L$, where $A_L$ denotes the companion matrix of $L$, has no solutions $B \in K^{n \times n}$ and
\item the linear differential equation $L(y)=b$ has no solutions in $K$.
\end{itemize}
\end{theorem}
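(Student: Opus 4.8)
The plan is to recast the algebraic-independence statement as the single equality $\delta\text{-}\dim\Galdelta(\cU)=m+n$ and then split this into the two stated conditions using the structure of $\Galdelta(\cU)$. Since $L$ is irreducible, $\cL$ is a simple $\p$-module and $\cU$ is an extension of $\bold{1}$ by $\cL$ corresponding to $L(y)=b$. The entries $u_i,\p u_i,\dots,\p^{n-1}u_i$ together with $z,\dots,\p^{n-1}z$ $\p$- and $\delta$-generate the PPV extension $K_\cU$, and $\{v_1,\dots,v_m\}$ is a transcendence basis of the homogeneous part; by the dictionary between the defining equations of the parameterized Galois group and the differential-algebraic relations among solutions, together with $\delta\text{-}\operatorname{trdeg}(K_\cU/K)=\delta\text{-}\dim\Galdelta(\cU)$, the displayed functions and all their $\delta$-derivatives are algebraically independent over $K$ if and only if $\delta\text{-}\dim\Galdelta(\cU)=m+n$. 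By Theorem~\ref{thm:structuregaloisgroupsemidirectproduct} (with $\cL_1=\bold{1}$, $\cL_2=\cL$), $\Galdelta(\cU)$ is an extension of $\Galdelta(\cL)$ by $R_u(\Galdelta(\cU))\subset\o(\cL)\cong\k^n$. Since $\delta$-dimension is additive along this exact sequence, and since $\delta\text{-}\dim\Galdelta(\cL)\le m$ (see the next paragraph) and $\delta\text{-}\dim R_u(\Galdelta(\cU))\le n$ (as a $\delta$-subgroup of $\k^n$), the equality $\delta\text{-}\dim\Galdelta(\cU)=m+n$ is equivalent to the conjunction $\delta\text{-}\dim\Galdelta(\cL)=m$ and $\delta\text{-}\dim R_u(\Galdelta(\cU))=n$.

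Next I would identify the first of these with the isomonodromy condition. As $\Gal(L)$ is quasi-simple and $\Galdelta(\cL)$ is Zariski dense in it (Proposition~\ref{prop:zariskidensegalois groups}), Theorem~\ref{thm:decompalmostdirectprodreductive} yields the dichotomy: either $\Galdelta(\cL)=\Gal(L)$, in which case $\delta\text{-}\dim\Galdelta(\cL)=m$, or $\Galdelta(\cL)$ is conjugate to $\Gal(L)(C)$, in which case $\delta\text{-}\dim\Galdelta(\cL)=0<m$. Hence $\delta\text{-}\dim\Galdelta(\cL)=m$ holds exactly when $\Galdelta(\cL)$ is not conjugate to constants, which by Proposition~\ref{prop:compintconjconts} is equivalent to the linear differential system $\p(B)-\delta(A_L)=A_LB-BA_L$ having no solution $B\in K^{n\times n}$. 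This is the first stated condition.

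I would then treat the unipotent radical under the standing condition just obtained, so that $\cL$ is simple and non-constant, hence purely non-constant. Theorem~\ref{thm:purelynonconstantuniprad} identifies $R_u(\Galdelta(\cU))$ with $\o(\widetilde{\cL_0})$ for a $\p$-submodule $\widetilde{\cL_0}\subset\cL$; by simplicity $\widetilde{\cL_0}$ is $0$ or $\cL$, so $R_u(\Galdelta(\cU))$ is either $\{0\}$ or all of $\o(\cL)$, and by Proposition~\ref{prop:unipradvectorgroup} it is a genuine $\k$-subspace, whence $\delta\text{-}\dim R_u(\Galdelta(\cU))=n$ if and only if $R_u(\Galdelta(\cU))=\o(\cL)$. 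Finally, $R_u(\Galdelta(\cU))=\{0\}$ means $\Galdelta(\cU)$ is reductive, that is, $\o(\cU)$ is a semisimple $\Galdelta(\cU)$-module; by the $\delta$-Tannakian equivalence (Theorem~\ref{thm:difftanequ}) this is equivalent to the splitting of $0\to\cL\to\cU\to\bold{1}\to 0$ as $\p$-modules, that is, to $L(y)=b$ having a solution in $K$. Therefore $R_u(\Galdelta(\cU))=\o(\cL)$ is equivalent to the second stated condition, and assembling the two equivalences proves the theorem in both directions.

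It remains to locate the hypothesis $m\ne n$. Its role is to guarantee that the simple $\Gal(L)$-module $\o(\cL)$, of dimension $n$, cannot be isomorphic to the adjoint module $\Lie\overline{\Gal(L)}$, of dimension $m$; equivalently, $\Hom_{\overline{\Gal(L)}}\big(\Lie\overline{\Gal(L)},\o(\cL)\big)=0$. This is precisely the hypothesis of Theorem~\ref{thm:adjoint}, which underlies the all-or-nothing behaviour invoked above and, in the detailed argument, guarantees that no partial splitting produced by a copy of the adjoint inside $\o(\cL)$ can occur. I expect the main obstacle to be making this decoupling rigorous: verifying that under $m\ne n$ the isomonodromy obstruction (controlling $\Galdelta(\cL)$) and the solvability obstruction (controlling $R_u(\Galdelta(\cU))$) are genuinely independent, so that the $\delta$-dimension count splits cleanly as $m+n$, and that the excluded case $m=n$, where $\o(\cL)$ may be the adjoint representation, is the only situation in which this separation can fail.
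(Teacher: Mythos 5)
Your skeleton tracks the paper's proof almost step for step---the reformulation as the equality $\delta\text{-}\dim\Galdelta(\cU)=m+n$, the dichotomy for $\Galdelta(\cL)$ via Theorem~\ref{thm:decompalmostdirectprodreductive}\eqref{part3} and Proposition~\ref{prop:compintconjconts}, and the identification $R_u(\Galdelta(\cU))=\o(\widetilde{\cL_0})$ with $\widetilde{\cL_0}\in\{0,\cL\}$ via Theorem~\ref{thm:purelynonconstantuniprad}---but there is a genuine gap at the pivotal step. You write that $R_u(\Galdelta(\cU))=\{0\}$ ``means $\Galdelta(\cU)$ is reductive, that is, $\o(\cU)$ is a semisimple $\Galdelta(\cU)$-module.'' That inference is false for linear differential algebraic groups: by Proposition~\ref{prop:cr}, semisimplicity of $\o(\cU)$ is equivalent to reductivity of the Zariski closure $\Gal(\cU)$, not of $\Galdelta(\cU)$, and the inclusion $\overline{R_u(G)}\subset R_u\big(\overline{G}\big)$ can be strict. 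Example~\ref{exa:paramradunipalgradunip} is exactly the counterexample: $\Galdelta(\cU)=\PSL_2$ is reductive, so $R_u(\Galdelta(\cU))=\{e\}$, yet $\Gal(\cU)=\Ga^3\rtimes\PSL_2$ and $\o(\cU)$ is a non-split extension of $\bold{1}$ by the adjoint module, so the corresponding equation $L(y)=b$ has no solution in $K$. Had your step been valid, the theorem would hold without the hypothesis $m\ne n$, which that example (where $\o(\cL)$ is adjoint, i.e., $m=n$) shows it does not. Your closing paragraph correctly senses that $m\ne n$ and Theorem~\ref{thm:adjoint} must enter, but misplaces their role---the all-or-nothing dichotomy for $\widetilde{\cL_0}$ follows from the simplicity of $\cL$ alone, and there is no ``partial splitting'' to exclude---and then explicitly defers the actual content (``making this decoupling rigorous''), which is precisely the part that needs proof.

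The repair is the paper's argument: if $R_u(\Galdelta(\cU))=\Stabdelta(\cL\oplus\bold{1})=\{e\}$, then $G=\Galdelta(\cU)$ acts faithfully on the simple module $\o(\cL)$, and one applies Theorem~\ref{thm:adjoint} to $0\to\o(\cL)\to\o(\cU)\to\k\to 0$: since $\Gal(L)$ is quasi-simple, $\Lie\Gal(L)$ is simple, so the adjoint module is irreducible of dimension $m$, and $m\ne n=\dim_\k\o(\cL)$ forces $\Hom_{\overline{G}}\big(\Lie\overline{G},\o(\cL)\big)=0$; hence the sequence of $G$-modules splits, and by the Tannakian equivalence together with \cite[Theorem~3.5]{cassisinger} the equation $L(y)=b$ has a solution in $K$. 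Combined with the easy converse (a solution in $K$ splits the $\p$-module extension, whence $R_u(\Galdelta(\cU))=\{e\}$), this yields the equivalence ``$R_u(\Galdelta(\cU))=\o(\cL)$ if and only if $L(y)=b$ has no solution in $K$'' under the first condition, after which your $\delta$-dimension bookkeeping (which is sound, and somewhat more explicit than the paper's) assembles the two directions as you describe.
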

\begin{example} If $L \in K[\p]$ and $\Gal(L) = \SL_n$, where $n =\ord L \ge 2$, then $L$ is irreducible and $\dim L \ne \dim\Gal(L) = n^2-1$. In this situation, in Theorem~\ref{cor:criteria}, we can take $$\{v_1,\ldots,v_m\} =  \{u_1,\dots,\p^{n-1}u_1,\dots,u_{n-1},\dots,\p^{n-1}u_{n-1},u_n,\dots,\p^{n-2}u_{n}\}\,.$$
\end{example}
\begin{proof}[Proof of Theorem~\ref{cor:criteria}]
Let $\cL$ (respectively, $\cU$) be the $\p$-module associated to $L$ (respectively, to $(\p -\p(b)/b)L$). 
Since the $\Delta$-field $K_\cU$ generated by $u_1,\ldots,u_n, z$ in $F$ is a PPV extension for $\cU$ over $K$,
the  differential transcendence degree of $K_\cU$ over $K$ equals the differential dimension of $\Galdelta(\cU)$.
Since
$\cL$ corresponds to the differential system $\p Y=A_L Y$, Proposition \ref{prop:compintconjconts} together with Theorem~\ref{thm:decompalmostdirectprodreductive}\eqref{part3} imply that the first hypothesis is equivalent to $\Galdelta(\cL)=\Gal(\cL)$.

Since $L$ is irreducible, there is no non-zero trivial $\p$-submodule $\cN$ of $\cL$ such that  the representation of $\Galdelta(\cL)$
on $\o(\cN)$ is conjugate to constants, that is, $\cL$ is purely non-constant. By Theorem \ref{thm:purelynonconstantuniprad}, $R_u(\Galdelta(\cU))=\o(\widetilde{\cL_0})$, where $\widetilde{\cL_0}$ is the smallest $\p$-submodule of $\cL$
 such that $\Galdelta(\cU/\widetilde{\cL_0})$ is reductive. Since $\cL$ is irreducible, either $\widetilde{\cL_0}$ is zero
 or $\widetilde{\cL_0}=\cL$.  
The module $\widetilde{\cL_0}$ is   zero
if and only if $R_u(\Galdelta(\cU))=\{e\}$. Moreover, $R_u(\Galdelta(\cU))=\{e\}$ if and only if $\omega(\cU)$ is a $\Galdelta(\cL)$-module. 
Since $\dim_\k\o(\cL)=n$, the $\Galdelta(\cL)$-module $\o(\cL)$
is not adjoint. Since $\Gal(L)$ is a quasi-simple linear algebraic group, $\Lie(\Gal(L))$ is simple (see \cite[Section~14.2]{Humphline}), and therefore its adjoint representation is irreducible. This implies that $$\Hom_{\Gal(L)}(\Lie(\Gal(L)),\o(\cL))=0.$$ Therefore, by the above and Theorem~\ref{thm:adjoint}, we find that  $\widetilde{\cL_0}$ is  zero if and only if the sequence of $\Galdelta(\cL)$-modules 
\begin{equation*}\label{exactsequstranscriteria}
0\to \o(\cL) \to \o(\cU)\to\k\to 0
\end{equation*}
splits, which, by \cite[Theorem~3.5]{cassisinger}, is equivalent to the existence of a solution in $K$ of the equation $L(y)=b$, in contradiction with the second hypothesis.
Therefore, we find that the second hypothesis is equivalent to $ R_u(\Gal(\cU))=(\k^n, +)$, that is, the vector group $\mathbf{G}_a^n$ and $\Galdelta(\cU)=\mathbf{G}_a^n \rtimes \Gal(\cL)$. 
The latter is equivalent to $v_1,\ldots,v_m, z,\dots,\p^{n-1} z$ being a differential transcendence basis of $K_\cU$ over~$K$.
\qed\end{proof}

\begin{remark}
The condition in the statement of Theorem~\ref{cor:criteria} to have no solutions $B \in K^{n\times n}$ is equivalent to the fact that 
$\Galdelta(\cL)$ is not conjugate 
to constants. For $K$ a computable field, this condition can be tested through various algorithms that find rational solutions  (see, for instance,~\cite{barkatou}). However,  one can sometimes easily prove the non-integrability of the
system by taking a close look at the topological generators of the parameterized differential Galois group such as the monodromy or the Stokes matrices.
This is the strategy employed in Lemma \ref{lemma:Besselparam}.
\end{remark}

\subsection{Application to the Lommel equation}\label{sec:examples}

We apply Theorem~\ref{cor:criteria} to the differential Lommel equation, which is a  non-homogeneous Bessel equation
\begin{equation}\label{eq:lommel}
 \frac{d^2y}{dx^2}+ \frac{1}{x} \frac{dy}{dx} + \left(1 -\frac{\alpha^2}{x^2}\right) y=  x ^{\mu -1},
\end{equation}
depending on two  parameters, $\alpha, \mu \in \C$. 

We will study the differential dependence of the solutions of \eqref{eq:lommel} with respect to the parameter $\alpha$.  To this purpose, we consider $\alpha$ as a new variable, transcendental over $\mathbb{C}$, and suppose that   $\mu \in \Z$. We endow the field $\C(\alpha,x)$ with the derivations $\delta=\frac{\partial }{\partial \alpha}$ and $\partial= \frac{ \partial}{\partial x}$, $\Delta = \{\delta,\partial\}$. 
Let $\k$ be a $\delta$-closure of $\C(\alpha)$.  We extend $\partial$ to $\k$ as the zero derivation. We extend $\Delta$ to $K = \k(x)$, the field of rational functions in $x$ with coefficients in $\k$, so that $\C(\alpha,x)$ is a $\Delta$-subfield of $K$.
Indeed, let $\mathcal{A}=\k\otimes_{\C(\alpha)} \C(\alpha,x)$, which is a $\Delta$-algebra over $\C(\alpha,x)$, and $\mathcal{A}^\partial=\k$. Since $\C(\alpha,x)^\partial=\C(\alpha)$, the multiplication homomorphism $\varphi: \mathcal{A} \to K$, is injective (see \cite[Corollary~1, page~87]{Kol}). Therefore, there is an extension of $\Delta$ onto $K$ making $\varphi$ a $\Delta$-homorphism so that $\C(\alpha,x)\subset K$ is a $\Delta$-field extension via $\varphi$.

Let $\cL$ be a $\partial$-module over $K$ associated to the Bessel differential  equation 
\begin{equation}\label{eq:Besseldeq}
L(y)= \frac{d^2y}{dx^2}+ \frac{1}{x} \frac{dy}{dx} + \left(1 -\frac{\alpha^2}{x^2}\right) y=0
\end{equation} and  let $\cU$ be a $\partial$-module over $K$ associated to the Lommel differential equation. We have:
\begin{equation}\label{eq:Lommel}
0 \rightarrow \cL \rightarrow  \cU \rightarrow \bold{1} \rightarrow 0\,.
\end{equation}

\begin{lemma}\label{lemma:Besselparam}
The parameterized differential Galois group of $\cL$ over $K$ is $\Sl_2$.

\end{lemma}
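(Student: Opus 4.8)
The plan is to first pin down the ordinary differential Galois group $\Gal(\cL)$, then promote the answer to the parameterized group via the dichotomy coming from quasi-simplicity, and finally eliminate the ``constant'' alternative by a local monodromy computation at $x=0$.

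First I would observe that the companion system $\p Y = A_L Y$ of \eqref{eq:Besseldeq} has $\operatorname{tr} A_L = -1/x$, so its Wronskian satisfies $W' = -W/x$ and may be normalized to $W = 1/x \in K$. Hence $\det[\tau]_U = 1$ for every $\tau \in \Gal(\cL)$, i.e.\ $\Gal(\cL) \subseteq \SL_2$. Since $\alpha$ is transcendental over $\Q$, it does not lie in $\tfrac12 + \Z$; the equation is regular singular at $x = 0$ with local exponents $\pm\alpha$ (the indicial equation being $\rho^2 - \alpha^2 = 0$) and irregular singular of Poincar\'e rank $1$ at $x = \infty$ with non-trivial Stokes data. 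The classical computation of the Bessel Galois group then gives $\Gal(\cL) = \SL_2$ (see, e.g., \cite{vdPutSingerDifferential}). By Proposition~\ref{prop:zariskidensegalois groups}, $\Galdelta(\cL)$ is Zariski dense in $\Gal(\cL) = \SL_2$, so $\overline{\Galdelta(\cL)} = \SL_2$ is connected, reductive and quasi-simple, with trivial central torus. Theorem~\ref{thm:decompalmostdirectprodreductive}\eqref{part3} then leaves exactly two possibilities: either $\Galdelta(\cL) = \SL_2$, or $\Galdelta(\cL)$ is conjugate, by a matrix of $\SL_2$, to $\SL_2(C)$, hence conjugate to constants in $\GL(\o(\cL))$.

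To finish I would rule out the second alternative. If $\Galdelta(\cL)$ were conjugate to constants, then by Proposition~\ref{prop:compintconjconts} the system $\p Y = A_L Y$ is isomonodromic over $K$: there is $B \in K^{2\times 2}$ with $\p B - \delta A_L = A_L B - B A_L$, equivalently a joint fundamental solution $Y(x,\alpha)$ of $\p Y = A_L Y$, $\delta Y = B Y$ on a suitable domain. Writing $\gamma$ for analytic continuation of $Y$ along a small positive loop around $x = 0$, one has $\gamma Y = Y M$ with $M = M(\alpha)$ the monodromy matrix. As $\gamma$ commutes with $\delta$ and fixes the rational matrix $B$, we get $\gamma(\delta Y) = \gamma(BY) = B Y M$, whereas $\delta(\gamma Y) = \delta(Y M) = B Y M + Y\,\delta M$; equating these forces $\delta M = 0$, i.e.\ the monodromy around $0$ is independent of $\alpha$. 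But the local exponents at $0$ are $\pm\alpha$ with $2\alpha \notin \Z$, so $M$ is regular semisimple with eigenvalues $e^{\pm 2\pi i \alpha}$, which are not $\delta$-constant since $\delta(e^{2\pi i\alpha}) = 2\pi i\, e^{2\pi i\alpha} \neq 0$. This contradiction excludes the constant case, and therefore $\Galdelta(\cL) = \SL_2$.

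The main obstacle is this last step: one must connect the purely algebraic isomonodromy condition (existence of $B \in K^{2\times 2}$) with the analytic statement that the monodromy around $x = 0$ cannot depend on $\alpha$. I would make this rigorous either through the integrability characterization of isomonodromy recorded after its definition (existence of a joint fundamental matrix, as used above), or, alternatively, by realizing the monodromy around $x=0$ directly as an element of $\Galdelta(\cL) \subset \GL_2(\k)$ via the parameterized monodromy of \cite{MitschiSinger:MonodromyGroupsOfParameterizedLinearDifferentialEquationsWithRegularSingularities} and noting that any element conjugate into $\SL_2(C)$ must have $\delta$-constant trace, whereas $\operatorname{tr} M = 2\cos(2\pi\alpha) \notin C$. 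Establishing $\Gal(\cL) = \SL_2$ over $\k(x)$ rather than over $\C(\alpha,x)$ is routine, since enlarging the algebraically closed constants from $\C(\alpha)^{\mathrm{alg}}$ to $\k$ does not change the Galois group and $\alpha$ remains transcendental.
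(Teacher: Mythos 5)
Your proposal is correct and, at its core, is the paper's own proof: the paper likewise starts from the classical fact $\Gal(\cL)=\SL_2$ (citing \cite{Kol68}), invokes Cassidy's dichotomy for Zariski-dense $\delta$-subgroups of quasi-simple groups \cite{Cassimpl} (your appeal to Theorem~\ref{thm:decompalmostdirectprodreductive}\eqref{part3} is the same statement), and then derives the contradiction from the parameterized monodromy matrix $M_0$ at $x=0$ with eigenvalues $e^{\pm 2\pi i\alpha}$, which by \cite[Theorem~3.5]{MitschiSinger:MonodromyGroupsOfParameterizedLinearDifferentialEquationsWithRegularSingularities} lies in a conjugate of $\Galdelta(\cL)$ and has non-$\delta$-constant spectrum --- this is exactly your ``alternative'' route (the paper argues with the spectrum where you use the trace; these are interchangeable here). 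The one genuine point of difference is your primary variant via Proposition~\ref{prop:compintconjconts}: as written it has a gap, because the matrix $B$ produced by isomonodromy has entries in $K=\k(x)$, where $\k$ is an abstract $\delta$-closure of $\C(\alpha)$, so neither $B$ nor a joint fundamental solution of $\p Y=A_LY$, $\delta Y=BY$ is a priori an analytic object, and ``analytic continuation around $x=0$ commutes with $\delta$ and fixes $B$'' does not yet make sense. To repair it one must first descend $B$ to $\C(\alpha,x)$ (feasible: rational solvability of $\p(B)-\delta(A_L)=A_LB-BA_L$ reduces, via bounds on pole orders at the singularities, to a finite-dimensional linear system over the $\p$-constants, and solvability of such a system is insensitive to the field extension $\C(\alpha)\subset\k$), after which your continuation argument goes through; you flag precisely this obstacle and fall back on the Mitschi--Singer realization, which is what the paper does and which avoids the descent entirely.
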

\begin{proof}
The differential Galois group of $\cL$ over $K$ is known to be $\Sl_2$ (see ~\cite{Kol68}).  By \cite{Cassimpl}, we know that either $\Gal^\delta(\cM)= \Sl_2$
or $\Gal^\delta(\cL)$ is conjugate to constants in $\Sl_2$. Suppose that  we are in the second situation, that is, there exists $P \in \Sl_2$ such that  $$P\Gal^\delta(\cL)P^{-1} \subset \{ M \in \Sl_2 \:|\: \delta(M)=0\}.$$
The coefficients of \eqref{eq:Besseldeq} lie  in $\mathbb{C}(\alpha,x)$. Moreover,   for a fixed value
of $\alpha$ in $\C$,  the point zero is a parameterized regular singular point of  \eqref{eq:Besseldeq} (see \cite[Definition~2.3]{MitschiSinger:MonodromyGroupsOfParameterizedLinearDifferentialEquationsWithRegularSingularities}).
If we fix a fundamental solution $Z_0$ of \eqref{eq:Besseldeq} and follow \cite[page~922]{MitschiSinger:MonodromyGroupsOfParameterizedLinearDifferentialEquationsWithRegularSingularities}, we are able to  compute
 the parameterized  monodromy  matrices of~\eqref{eq:Besseldeq} around zero.  For a suitable choice of $Z_0$, we find  the following parameterized  monodromy  matrix,
$$
M_0 = \begin{pmatrix}
\zeta & 0 \\
0 & \overline{\zeta}
,\end{pmatrix}  
$$ where $\zeta= e^{2 i\pi \alpha}$ and $\overline{\zeta} =e^{-2 i\pi \alpha}$ (see \cite[page~35]{Morales}).
By \cite[Theorem 3.5]{MitschiSinger:MonodromyGroupsOfParameterizedLinearDifferentialEquationsWithRegularSingularities},  $M_0$ belongs  to some conjugate of  $\Gal^\delta (\cL)$. This means that there exists $Q \in \Gl_2$ such that 
 $\delta(QM_0Q^{-1})=0$. Since conjugate matrices have the same spectrum and the spectrum of $M_0$ is not $\delta$-constant, we find
 a contradiction.\qed\end{proof}
Let $J_\alpha(x)$ be the Bessel function of the first kind and let $Y_\alpha(x)$ be the Bessel function of the second kind. A solution of the Lommel differential equation is the Lommel 
 function $s_{\mu,\alpha}(x)$, which is defined as follows
$$
s_{\mu,\alpha}(x) = \frac{1}{2} \pi  \left[ Y_\alpha (x) \int_0^x x^\mu J_\alpha (x)\, dx - J_\alpha (x) \int_0^x x^\mu Y_\alpha (x)\, dx\right].$$
\begin{proposition}\label{prop:grouplommel} 
The functions, $J_\alpha(x),Y_\alpha(x), \frac{d}{dx}(Y_\alpha)(x), s_{\mu,\alpha}(x)$ and $\frac{d}{dx}s_{\mu,\alpha}(x)$ 
and all their derivatives of all order with respect to $\frac{\partial}{\partial \alpha}$ are algebraically independent over $\C(\alpha,x)$. Moreover,
the parameterized differential Galois group of $\cU$ is isomorphic to a semi-direct product $\Ga^2 \rtimes \Sl_2$. 
\end{proposition}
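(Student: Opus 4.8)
The plan is to apply Theorem~\ref{cor:criteria} directly, taking $L$ to be the Bessel operator of~\eqref{eq:Besseldeq}, $b=x^{\mu-1}$, and $\cU$ the Lommel module from~\eqref{eq:Lommel}. First I would record that the standing hypotheses of that theorem hold: by Lemma~\ref{lemma:Besselparam} (together with the classical fact $\Gal(L)=\SL_2$ from \cite{Kol68}) the group $\Gal(L)=\SL_2$ is quasi-simple, while $n=\ord L=2$ and $m=\dim\Gal(L)=3$, so $m\neq n$. In the notation of the theorem I take $u_1=J_\alpha(x)$, $u_2=Y_\alpha(x)$ as $K$-linearly independent solutions of $L(y)=0$ and $z=s_{\mu,\alpha}(x)$ as a solution of $L(y)=b$, realized inside a PPV extension $F=K_\cU$, which by definition satisfies $F^\p=\k$. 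A maximal algebraically independent over $K$ subset of $\{J_\alpha,\p J_\alpha,Y_\alpha,\p Y_\alpha\}$ can be chosen to be $\{J_\alpha,Y_\alpha,\p Y_\alpha\}$, the missing $\p J_\alpha$ being rational over these by the Bessel Wronskian relation $J_\alpha\p Y_\alpha-Y_\alpha\p J_\alpha\in K$. Once the two bulleted hypotheses of Theorem~\ref{cor:criteria} are checked, the theorem yields algebraic independence of $J_\alpha,Y_\alpha,\p Y_\alpha,s_{\mu,\alpha},\p s_{\mu,\alpha}$ and all their $\delta$-derivatives over $K=\k(x)$; since $\C(\alpha,x)\subset K$, independence over $K$ forces independence over $\C(\alpha,x)$, and the proof of Theorem~\ref{cor:criteria} simultaneously identifies $R_u(\Galdelta(\cU))=\o(\cL)=\Ga^2$ and $\Galdelta(\cU)=\Ga^2\rtimes\SL_2$.

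For the first hypothesis I would invoke the Remark following Theorem~\ref{cor:criteria}: the nonexistence of $B\in K^{2\times 2}$ with $\p(B)-\delta(A_L)=A_LB-BA_L$ is equivalent, through Proposition~\ref{prop:compintconjconts}, to $\Galdelta(\cL)$ not being conjugate to constants. This is immediate from Lemma~\ref{lemma:Besselparam}, which gives $\Galdelta(\cL)=\SL_2$: a group that equals all of $\SL_2$ cannot be conjugated into the proper subgroup $\SL_2(C)$, hence is not conjugate to constants.

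The main work, and the only genuinely equation-specific step, is the second hypothesis: $L(y)=x^{\mu-1}$ has no solution $y\in K=\k(x)$. I would prove this by a local and degree analysis. At every finite $x_0\neq 0$ the operator $L$ has regular coefficients, so a pole of order $p\ge 1$ of $y$ at $x_0$ would force a pole of order $p+2$ in $L(y)$, contradicting the regularity of the right-hand side there; hence any rational solution is a Laurent polynomial $y=\sum_{j=j_0}^{j_1}a_jx^j$ with $a_{j_0}a_{j_1}\neq 0$. Using $L(x^j)=(j^2-\alpha^2)x^{j-2}+x^j$, and the fact that $\alpha^2\neq j^2$ for every $j\in\Z$ because $\alpha$ is transcendental over $\C$, the lowest-order term of $L(y)$ lies in degree $j_0-2$ with coefficient $a_{j_0}(j_0^2-\alpha^2)\neq 0$, while the highest-order term lies in degree $j_1$ with coefficient $a_{j_1}\neq 0$. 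Matching $L(y)$ with the single monomial $x^{\mu-1}$ then forces $j_0-2=\mu-1$ and $j_1=\mu-1$, that is $j_0=\mu+1>\mu-1=j_1$, contradicting $j_0\le j_1$. Therefore no rational solution exists.

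The principal obstacle is precisely this last step: everything else is an application of the machinery already developed (Theorem~\ref{cor:criteria} and Lemma~\ref{lemma:Besselparam}), whereas excluding a rational solution of the inhomogeneous Bessel equation requires the explicit singularity analysis above. Here the transcendence of $\alpha$ over $\C$ is exactly what keeps $j^2-\alpha^2$ from vanishing and makes the two-sided degree count at $x=0$ conclusive.
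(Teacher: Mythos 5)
Your proof is correct and takes essentially the same route as the paper: you apply Theorem~\ref{cor:criteria} with Lemma~\ref{lemma:Besselparam} supplying the first (non-integrability) hypothesis, and you exclude a rational solution of $L(y)=x^{\mu-1}$ by the same Laurent-polynomial analysis at $x=0$, where the transcendence of $\alpha$ over $\C$ guarantees $j^2-\alpha^2\neq 0$ so that the lowest term sits in degree $j_0-2$ and the highest in degree $j_1$, forcing the same contradiction the paper derives. Your write-up is in fact slightly more explicit than the paper's (justifying that poles can only occur at $0$, and exhibiting the Wronskian relation behind the choice $\{J_\alpha,Y_\alpha,\p Y_\alpha\}$), but there is no substantive difference in method.
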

\begin{proof}
 
Since $\Gal^\delta(\cL)=\Sl_2$, we just need to prove that $L(y)=x^{\mu -1}$ has no solution $g$ in $K$ in order to apply Theorem~\ref{cor:criteria} to the Lommel differential equation. Thus, suppose on the contrary  that $L(y)=x^{\mu -1}$ has a rational solution
 $g \in \k(x)$. Using  partial-fraction decomposition, one can show that 
the only possible pole  of $g$ is zero. If we write $$g=\sum_{j=m}^n a_j x^j,\quad m,n\in\Z,\ m\leq n,\ a_j\in\k,\ a_ma_n\neq 0,$$ then the highest and lowest order terms of $L(g)\in\k[x,1/x]$ are $$a_nx^n\neq 0\quad \text{and}\quad (m^2-\alpha^2)a_mx^{m-2}\neq 0,$$ respectively. Since different powers of $x$ are linearly independent over $\k$ and $n\neq m-2$, $L(g)-x^{\mu-1}$ contains at least one non-zero term. Contradiction.
\qed\end{proof}

\bibliographystyle{spmpsci}
\bibliography{bibdata}

\end{document}